\newcommand{\Natu}{\mathbb{N}}
\newcommand{\Real}{\mathbb{R}}
\newcommand{\mr}[1]{\mathrm{#1}}
\newcommand{\EXP}[1]{\mathsf{E}\left[#1\right]}
\newcommand{\PROB}[1]{\mathsf{P}\left[#1\right]}
\newcommand{\mcl}[1]{\mathcal{#1}}
\newcommand{\filt}[1]{\mathscr{#1}}
\newcommand{\msc}[1]{\mathscr{#1}}
\newcommand{\hd}{H\" older }
\newcommand{\hds}{H\" older's }
\newcommand{\U}{\mathbf{U}}
\newcommand{\V}{\mathbf{V}}
\title{Limit distributions for the discretization error of \\
stochastic Volterra equations}
\author{Masaaki Fukasawa}
\author{Takuto Ugai}
\affil{Graduate School of Engineering Science, Osaka University}
\date{\vspace{-5ex}}
\numberwithin{equation}{section}
\theoremstyle{plain}
\newtheorem{theorem}{Theorem}[section]
\newtheorem{proposition}[theorem]{Proposition}
\newtheorem{corollary}[theorem]{Corollary}
\newtheorem{lemma}[theorem]{Lemma}
\theoremstyle{definition}
\newtheorem{definition}[theorem]{Definition}
\newtheorem{remark}[theorem]{Remark}
\begin{document}

\maketitle

\begin{abstract}
  Our study aims to specify the asymptotic error distribution in the
 discretization of a stochastic Volterra equation with a fractional
 kernel. It is well-known that for a standard stochastic differential
 equation, the discretization error, normalized with its rate of
 convergence $1/\sqrt{n}$, converges in law to the solution of a certain
 linear equation. Similarly to this, we show that a suitably normalized
 discretization error of the Volterra equation converges in law to the
 solution of a certain linear Volterra equation with the same fractional
 kernel.
\end{abstract}

\section{Introduction}

The discretization of the stochastic differential equations (SDE) has
been studied by many researchers for many years. 
Let $T>0$ and consider a standard $d$-dimensional SDE and its discretization:
\begin{align*}
  X_t &= X_0 + \int_0^t b(X_s) \mr{d}s + \int_0^t \sigma(X_s) \mr{d}W_s,\quad t \in [0,T],\\
  \hat{X}_t &= X_0 + \int_0^t b(\hat{X}_\frac{[ns]}{n}) \mr{d}s + \int_0^t \sigma(\hat{X}_\frac{[ns]}{n}) \mr{d}W_s,\quad t \in [0,T].
\end{align*}
Back in the 1990s, the limit distribution of the scaled error $\U^{n} =
\sqrt{n} (X -\hat{X})$ was studied by Kurtz and
Protter~\cite{KurtzProtterWong} and Jacod and
Protter~\cite{JacodProtterAsymp}.  
They proved that $\U^{n}$ stably converges in law to $\U$ as $n$ tends to infinity, where $\U = (U^1,\dots,U^d)$ is the solution of the following SDE:
\begin{equation}\label{eqlimitSDE}
  U_t^i = \sum_{k=1}^d \int_0^t U^k_s\left(\partial_k b^i (X_s)   \mr{d}s +
   \sum_{j=1}^m 
\partial_k \sigma^i_j (X_s)   \mr{d} W^j_s\right) - \frac{1}{\sqrt{2}}
   \sum_{j=1}^m \sum_{k=1}^d \sum_{l=1}^m \int_0^t   \partial_k
   \sigma^i_j (X_s) \sigma^k_l(X_s) \mr{d}B^{l,j}_s,
\end{equation}
where $B$ is an $m^2$-dimensional standard Brownian motion independent
of the $m$-dimensional standard Brownian motion $W$.
Recent developments include extensions to stochastic time
partitions~\cite{Fukasawa2020} and SDEs driven by a fractional Brownian
motion~\cite{Hu-Liu-Nualart, Liu-Tindel, Aida-Naganuma}.
Applications include the optimal choice of tuning parameters for the
Multi-Level Monte Carlo method~\cite{Kebaier2015}.

The aim of our study is to extend their result to $d$-dimensional stochastic Volterra equations (SVE) of the form:
\begin{equation}\label{eqSVE}
  X_t = X_0 +  \int_0^t K(t-s) b(X_s) \mr{d}s+ \int_0^t K(t-s)\sigma(X_s) \mr{d} W_s, \quad t \in [0,T],
\end{equation}
where $K(t)=\frac{t^{H-1/2}}{\Gamma(H+1/2)},\,H \in (0,1/2]$. 
Because of the singularity at the origin of the kernel function $K$, 
a sample path of the solution $X$ with $H<1/2$ has lower \hd regularity 
than that of a SDE has.
In particular, the solution $X$ is not a semimartingale.
Recently such a SVE has attracted attention in mathematical finance in
the context of rough volatility modeling; see Abi~Jaber et al.~\cite{AVP} and
references therein. Applications in financial practice, such as pricing
path-dependent options, require numerical
methods to simulate the solution of the SVE. The discretization of the
SVE is then the most natural step. An estimation of the associated
numerical error is therefore of practical importance.

As in the case of SDEs, let $\hat{X}$ be the solution of the discretized SVE of (\ref{eqSVE}), that is,
\begin{equation}\label{eqapproxSVE}
  \hat{X}_t = X_0 + \int_0^t K(t-s) b(\hat{X}_{\frac{[ns]}{n}}) \mr{d} s+\int_0^t K(t-s)\sigma(\hat{X}_{\frac{[ns]}{n}}) \mr{d} W_s, \quad t \in [0,T].
\end{equation}
The solution $\hat{X}_t$ of (\ref{eqapproxSVE}) is constructed in an inductive way. Indeed, we have
\begin{equation*}\label{eqhatXinduction}
  \begin{alignedat}{4}
    \hat{X}_{\frac{1}{n}} &= X_0 + b(X_0)& \int_0^\frac{1}{n} &K(\tfrac{1}{n}-s)  \mr{d}s + \sigma(X_0)\int_0^{\frac{1}{n}} K(\tfrac{1}{n}-s) \mr{d}W_s,\\
    \hat{X}_{\frac{2}{n}} &= X_0 + b(X_0) &\int_0^{\frac{1}{n}} &K(\tfrac{2}{n}-s)  \mr{d}s +b(\hat{X}_{\frac{1}{n}}) \int_{\frac{1}{n}}^{\frac{2}{n}} K(\tfrac{2}{n}-s) \mr{d}s \\
    &&&+\sigma(X_0) \int_0^{\frac{1}{n}} K(\tfrac{2}{n}-s)  \mr{d}W_s +  \sigma(\hat{X}_{\frac{1}{n}}) \int_{\frac{1}{n}}^{\frac{2}{n}} K(\tfrac{2}{n}-s) \mr{d}W_s.
  \end{alignedat}
\end{equation*}
Iterating this procedure, we consequently obtain the solution:
\begin{equation}\label{eqhatXiteration}
    \hat{X}_t = X_0 + \sum_{j=0}^{[nt]} b(\hat{X}_{\frac{j}{n}}) \int_{\frac{j}{n}}^{\frac{j+1}{n}\wedge t} K(t-s)\mr{d} s +\sum_{j=0}^{[nt]} \sigma(\hat{X}_{\frac{j}{n}}) \int_{\frac{j}{n}}^{\frac{j+1}{n}\wedge t} K(t-s)\mr{d} W_s.
\end{equation}
In particular, we can construct $(\hat{X}_{t_1},\dots,\hat{X}_{t_n})$ 
on the regular grid $t_k = \frac{k}{n}$ by generating
the independent sequence of Gaussian vectors
(matrices when the dimension of $W$ is larger than two)
\begin{equation}\label{gauss}
\left(\, \int_{t_{k-1}}^{t_k} K(t_k-s) \mr{d}W_s,
 \,\int_{t_{k-1}}^{t_k} K(t_{k+1}-s) \mr{d}W_s, \dots,
 \, \int_{t_{k-1}}^{t_k} K(t_n-s) \mr{d}W_s\right), \ \ k= 1,2,\dots,n
\end{equation}
using the Cholesky decomposition of the identical covariance matrix
\begin{equation*}
 \Sigma = [\Sigma_{ij}], \ \ \Sigma_{ij}
= \int_{0}^{\frac{1}{n}} K\left(\frac{i}{n}-s\right)K\left(\frac{j}{n}-s\right) \mr{d}s.
\end{equation*}
Therefore \eqref{eqapproxSVE} describes a feasible numerical scheme.
Further, as noted by Fukasawa and Hirano~\cite{Fukasawa-Hirano}, the matrix
$\Sigma$ is nearly degenerate, meaning that the projection
to a low (but more than three when $H\neq 1/2$) dimensional Gaussian vector gives an
accurate and efficient approximation to \eqref{gauss}. 
The extreme approximation is the one dimensional projection that essentially
corresponds to the Euler scheme for SVE, for which Richard et
al.~\cite{Richard-Tan-Yang2021} gave the rate of convergence.
The discretization scheme \eqref{eqapproxSVE} is expected to be more accurate
than the Euler scheme in the sense that it gives a lower mean squared
error (with the same rate of convergence). 
We determine the asymptotic error distribution for the scheme
\eqref{eqapproxSVE} in this paper.
The corresponding analysis for the
Euler scheme is however remained for future research.

The Jacod~\cite{Jacod1997}  theory of stable convergence for
semimartingales played a key role in the study of the discretization
error in the SDE case. Since a solution of a SVE with a singular
fractional kernel is not a semimartingale any more, 
the argument of Jacod and Protter~\cite{JacodProtterAsymp}
is not directly extended to the case of SVE. To overcome this technical
difficulty, we exploit the fact that the convolution with respect to
the fractional kernel $K$ is a continuous map between \hd spaces.
This fact is utilized recently by Horvath et al.~\cite{Horvath} to show the
convergence of random walk approximations to rough volatility models.
We therefore formulate our limit theorem in terms of the weak
convergence of laws on  \hd spaces. 
A tightness criterion for a separable subset of the space of \hd
functions is studied by Hamadouche~\cite{Hamadouche2000}.  
We give, similar but not the same, a useful criterion for the tightness
in the full  space of the \hd functions in Appendix.

We give our main result in Section~\ref{secmain}. The proofs for some lemmas are
deferred to  Section~\ref{secproof}, after some
preliminaries are given in Section~\ref{secpre}.

\section{The main result}\label{secmain}

\subsection{The statement}
Let $(\Omega, \msc{F}, \mathsf{P}, \{\msc{F}_t\}_{t\geq0})$ be a
filtered probability space satisfying the usual conditions, and $W$ be an
$m$-dimensional standard Brownian motion defined on this space.  
Assume the 
coefficients $b:\Real^{d}\to \Real^{d}$ and $\sigma:\Real^{d} \to
\Real^{d\times m}$ in (\ref{eqSVE}) and (\ref{eqapproxSVE}) are
continuously differentiable functions with the derivatives being bounded
and uniformly continuous.\footnote{
We remark here that the uniform continuity of the derivatives would be
relaxed to the  simple continuity assumption by a localization argument:
stopping the processes at the time when the process is going out of some
compact set. Similarly, if we know a priori that $X$ stays in a
domain of $\mathbb{R}^d$, then the regularity conditions on
$\mathbb{R}^d$ would be relaxed to those on the domain.}

We denote by $\mcl{C}_0$ the set of the $\Real^d$-valued continuous
functions on $[0,T]$ vanishing at $t=0$ and by $\mcl{C}^\lambda_0$ the
set of the $\Real^d$-valued $\lambda$-\hd continuous functions with the
same property. Also, $\|\cdot\|_{\infty}, \|\cdot\|_{\mcl{C}^\lambda_0},
\|\cdot\|_{L_p}$ denote the supremum norm on $[0,T]$, the \hd norm on
$[0,T]$, and the $L_p$ norm with respect to $\mathsf{P}$,
respectively. Throughout the paper,  we always view elements of $\Real^{d}$ as column vectors, and denote by $A^\top$ the transpose of $A$ for any matrix $A$. We also denote by $C$ a constant which may differ from one place to another one.

The following theorem is our main result.
\begin{theorem}\label{thmlimdistSVE}
  Let $\epsilon \in (0,H)$. Then the process $\U^{n} = n^{H}(X -
 \hat{X})$ stably converges in law  in $\mcl{C}^{H-\epsilon}_0$ to a
 process $\U = (U^1,\dots,U^d)$ which is the unique continuous solution of the SVE
  \begin{multline}\label{eqlimitSVE}
    U^i_t = \sum_{k=1}^d \int_0^t K(t-s) U^k_s \left(\partial_k b^i(X_s)
   \mr{d}s + \sum_{j=1}^m  \partial_k \sigma^i_j(X_s) \mr{d}W^j_s\right) \\
    - \frac{1}{\sqrt{\Gamma(2H+2)\sin \pi H}}  \sum_{j=1}^m \sum_{k=1}^d
   \sum_{l=1}^m \int_0^t K(t-s)\partial_k \sigma^i_j(X_s)
   \sigma^k_l(X_s)\mr{d}B^{l,j}_s, \, t\in[0,T], \, i = 1,\dots,d,
  \end{multline}
where $B$ is an $m^2$-dimensional standard Brownian motion, independent
 of $\msc{F}$ and defined on some extension of $(\Omega, \msc{F},
 \mathsf{P})$.
\end{theorem}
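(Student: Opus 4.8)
The plan is to derive a linear stochastic Volterra equation satisfied \emph{exactly} by the rescaled error $\U^{n}=n^{H}(X-\hat X)$, to isolate within it the single noise term that carries the limit, and to pass to the limit by combining a martingale central limit theorem with the continuity of convolution against $K$ on \hd spaces.

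First I would subtract \eqref{eqapproxSVE} from \eqref{eqSVE} and split $b(X_s)-b(\hat X_{[ns]/n})=(b(X_s)-b(\hat X_s))+(b(\hat X_s)-b(\hat X_{[ns]/n}))$, and likewise for $\sigma$. Writing the first differences through the fundamental theorem of calculus as $b(X_s)-b(\hat X_s)=\beta^n_s(X_s-\hat X_s)$ and $\sigma(X_s)-\sigma(\hat X_s)=\gamma^n_s(X_s-\hat X_s)$ with $\beta^n_s=\int_0^1\nabla b(\hat X_s+\theta(X_s-\hat X_s))\,\mr{d}\theta$ and an analogous $\gamma^n$, the uniform continuity of the derivatives together with the a priori convergence $\hat X\to X$ gives $\beta^n\to\nabla b(X)$ and $\gamma^n\to\nabla\sigma(X)$ uniformly. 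The error equation becomes
\begin{equation*}
  \U^{n}_t=\int_0^t K(t-s)\beta^n_s\U^{n}_s\,\mr{d}s+\int_0^t K(t-s)\gamma^n_s\U^{n}_s\,\mr{d}W_s+D^n_t+\Xi^n_t,
\end{equation*}
a \emph{linear} SVE for $\U^{n}$ whose inhomogeneous part consists of the drift discretization $D^n_t=n^{H}\int_0^t K(t-s)(b(\hat X_s)-b(\hat X_{[ns]/n}))\,\mr{d}s$ and the diffusion discretization $\Xi^n_t=n^{H}\int_0^t K(t-s)(\sigma(\hat X_s)-\sigma(\hat X_{[ns]/n}))\,\mr{d}W_s$. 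A second--moment estimate, exploiting that the smooth time integral averages out the mean--zero local increments, shows $D^n\to0$ in $\mcl{C}^{H-\epsilon}_0$, so $\Xi^n$ alone produces the limiting noise.

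The heart of the proof is the stable convergence of $\Xi^n$. Here I would write $\Xi^n_t=\int_0^t K(t-s)\,\mr{d}N^n_s$ with the genuine $\msc{F}$--martingale $N^n_s=n^{H}\int_0^s(\sigma(\hat X_u)-\sigma(\hat X_{[nu]/n}))\,\mr{d}W_u$, and prove $N^n\to N$ stably by the continuous--martingale central limit theorem. The two inputs are the asymptotic orthogonality $[N^n,W]_s\to0$, which forces the limit to be driven by a Brownian motion $B$ independent of $\msc{F}$, and the convergence of the bracket $[N^n]_s=n^{2H}\int_0^s\bigl(\sigma(\hat X_u)-\sigma(\hat X_{[nu]/n})\bigr)\bigl(\sigma(\hat X_u)-\sigma(\hat X_{[nu]/n})\bigr)^{\top}\,\mr{d}u$. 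Expanding $\hat X_u-\hat X_{[nu]/n}$ via \eqref{eqhatXiteration} and using that the kernel difference $K(u-v)-K([nu]/n-v)$ concentrates near $v=[nu]/n$, its leading part is $\sigma(\hat X_{[nu]/n})(Z_u-Z_{[nu]/n})$, where $Z_u=\int_0^u K(u-v)\,\mr{d}W_v$ is the Riemann--Liouville process; the concentration lets $\sigma$ be frozen at the current point, so the limiting bracket acquires the \emph{local} product structure $\partial_k\sigma^i_j(X_u)\sigma^k_l(X_u)$ summed over $j,l$, which is exactly what produces the $m^2$--dimensional $B$. The constant is then pinned down by the mesh average of $n^{2H}\EXP{(Z_u-Z_{[nu]/n})^2}$, which converges through the Mandelbrot--Van Ness integral $\int_0^\infty((y+1)^{H-1/2}-y^{H-1/2})^2\,\mr{d}y+\tfrac1{2H}$ to $1/(\Gamma(2H+2)\sin\pi H)$; its square root is the coefficient in \eqref{eqlimitSVE}, which equals $1/\sqrt2$ at $H=1/2$ in agreement with \eqref{eqlimitSDE}. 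Once $N^n\to N$ stably I would transfer the convergence to $\Xi^n=K\ast\mr{d}N^n$ in $\mcl{C}^{H-\epsilon}_0$ using the continuity of the stochastic convolution with kernel $K$ between the relevant \hd spaces, which is the device that replaces the semimartingale machinery unavailable here.

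Finally I would close the argument at the level of the linear equation. Well-posedness of \eqref{eqlimitSVE}, that is, existence and uniqueness of a continuous solution, follows from a Picard iteration using the integrability of $K$ (valid since $H>0$), and the same contraction yields the continuity of the solution map $\Xi\mapsto\U$ together with a Gronwall--type a priori bound $\|\U^{n}\|_{\mcl{C}^{H-\epsilon}_0}\le C(\|\Xi^n\|_{\mcl{C}^{H-\epsilon}_0}+\|D^n\|_{\mcl{C}^{H-\epsilon}_0})$. This bound, combined with the tightness of $\Xi^n$ and the tightness criterion of the Appendix, gives tightness of $\{\U^{n}\}$ in $\mcl{C}^{H-\epsilon}_0$; the continuity of the solution map, applied to the stable convergence of the data $(\Xi^n+D^n,\beta^n,\gamma^n)\to(\Xi,\nabla b(X),\nabla\sigma(X))$, then identifies every limit point as the unique solution $\U$ of \eqref{eqlimitSVE}, yielding the stable convergence $\U^{n}\to\U$. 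I expect the decisive difficulty to be the bracket computation: controlling the memory term so that the product structure localizes and the precise constant $1/(\Gamma(2H+2)\sin\pi H)$ emerges is where the non-semimartingale nature of $X$ genuinely bites and the analysis departs from the classical SDE case.
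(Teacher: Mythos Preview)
Your plan matches the paper's strategy closely: the martingale CLT for the noise term via bracket/covariation limits yielding the constant $1/(\Gamma(2H+2)\sin\pi H)$, the vanishing of the drift discretization, the use of the $K$-convolution as a continuous map between H\"older spaces (the Marchaud derivative $\mcl{J}^\alpha$), tightness in $\mcl{C}^{H-\epsilon}_0$, and pathwise uniqueness for \eqref{eqlimitSVE} are exactly what the paper isolates as Lemmas~\ref{lemvnvnvnwquadlim}--\ref{lemsolUeval}. A secondary organizational difference is that the paper applies Jacod's CLT to $V^{n,k,j}=n^{H}\int_0^\cdot(\hat X^k_s-\hat X^k_{[ns]/n})\,\mr{d}W^j_s$ rather than to your $N^n$, inserting the factor $\partial_k\sigma^i_j(\hat X_{[ns]/n})$ afterward as a stochastic integral against $V^{n}$; this merely shifts where the Taylor remainder is controlled.

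The one place where your outline glosses over a genuine technical point is the final ``continuity of the solution map'' step. The map $(\Xi,\beta,\gamma)\mapsto\U$ involves It\^o integrals against $W$ and is therefore not $\omega$-wise continuous, so the continuous mapping theorem does not transfer stable convergence of the data to $\U^n$ directly; the Gronwall/Picard contraction gives $L_p$-continuity, but your forcing $\Xi^n$ converges only stably in law, not in $L_p$. The paper handles this by a tightness-plus-characterization argument: tightness of $\U^n$ is shown directly (Lemma~\ref{lemtightUn}), and any subsequential limit is identified (Lemma~\ref{lemchar}) by writing the right-hand side of the approximate equation as $\mcl{J}^\alpha\mathbf{\Phi}^n$ with $\mathbf{\Phi}^n$ a genuine semimartingale, invoking the Kurtz--Protter uniform-tightness theorem to obtain joint convergence $(\U^n,\mathbf{\Phi}^n)\to(\U,\mathbf{\Phi})$ --- this is precisely what allows the stochastic integrals $\int\partial_k\sigma^i_j(\hat X_{[ns]/n})\U^{n,k}_s\,\mr{d}W^j_s$ and $\int\partial_k\sigma^i_j(\hat X_{[ns]/n})\,\mr{d}V^{n,k,j}_s$ to pass to the limit under weak convergence --- upgrading to H\"older convergence via a Kolmogorov bound, and only then applying the pathwise-continuous operator $\mcl{J}^\alpha$. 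Your solution-map formulation can be made rigorous, but doing so will reproduce exactly this Kurtz--Protter step; it is not bypassed by the contraction argument.
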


Note that a stable convergence in law is stronger than a convergence in
law
and weaker than a convergence in probability. See Jacod and
Protter~\cite{Jacod2011} and 
Haeusler and Luschgy~\cite{Haeusler} for the details.
Note also that, since the inclusion $\mcl{C}^{H-\epsilon}_0 \to \mcl{C}_0$ is
continuous, the continuous mapping theorem implies  
the convergence in law of $\U^n$ to $\U$ in $\mcl{C}_0$.
When $H=1/2$, we recover the classical result \eqref{eqlimitSDE}.

An interesting observation from Theorem~\ref{thmlimdistSVE} is that 
when $H$ is small, the discretization suffers from not only a small
rate of convergence but also a large limit variance due to the factor
$\sin \pi H$ in the denominator of \eqref{eqlimitSVE}.
As in the classical case \eqref{eqlimitSDE}, the limit law $\U_t$ is
Gaussian conditionally on $\msc{F}$.

\subsection{Lemmas}
Here we list key steps to prove Theorem~\ref{thmlimdistSVE}
as lemmas,
for which the proofs are deferred to Section~\ref{secproof}. 
We start with observing the
following decomposition of  $\U^n = (U^{n,1},\dots,U^{n,d})$: 
\begin{multline}\label{eqorgSVE}
  U^{n,i}_t \approx  \int_0^t K(t-s) \left(\nabla b^i(\hat{X}_{\frac{[ns]}{n}})^\top \U^n_s \mr{d}s  + \sum_{j=1}^m \nabla\sigma^i_j(\hat{X}_{\frac{[ns]}{n}})^\top \U^n_s \mr{d}W^j_s\right) \\
  +  \int_0^t K(t-s)n^H  \nabla b^i(\hat{X}_{\frac{[ns]}{n}})^\top (\hat{X}_s - \hat{X}_{\frac{[ns]}{n}}) \mr{d}s+ \sum_{j=1}^m \sum_{k=1}^d\int_0^t K(t-s) \partial_k \sigma^i_j(\hat{X}_{\frac{[ns]}{n}})  \mr{d}V^{n,k,j}_s,
\end{multline}
where $\hat{X}=(\hat{X}^1,\dots,\hat{X}^d)$ is the solution of
\eqref{eqapproxSVE} and
$\V^{n} = \{V^{n,k,j}\}$ is defined as
\begin{equation*}
V^{n,k,j}= n^H \int_0^\cdot (\hat{X}^k_s -
 \hat{X}^k_{\frac{[ns]}{n}})\mr{d}W^j_s,\quad 1\leq k\leq d, \  1\leq j\leq m.
\end{equation*}
The first lemma gives the limits of the quadratic variation and
covariation of $\V^{n}$ and $W$.

\renewcommand{\labelenumi}{(\roman{enumi})}
\begin{lemma}\label{lemvnvnvnwquadlim}
  For all $t\in[0,T] , (k_1,k_2) \in \{1,...,d\}^2, 1\leq j \leq m$,
  \begin{enumerate}
    \item
    \begin{equation*}
      \langle V^{n,k_1,j},V^{n,k_2,j} \rangle_t \xrightarrow[\text{in } L_1]{n\to\infty}  \frac{1}{\Gamma(2H+2)\sin \pi H} \sum_{l=1}^m \int_0^t \sigma^{k_1}_l(X_s)\sigma^{k_2}_l(X_s) \mr{d} s ,
    \end{equation*}
    \item
    \begin{equation*}
      \langle V^{n,k,j},W^j \rangle_t \xrightarrow[\text{in } L_1]{n\to\infty} 0.
    \end{equation*}
  \end{enumerate}
\end{lemma}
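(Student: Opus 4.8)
Both assertions reduce to the explicit It\^o-isometry form of the covariations. Since $V^{n,k,j}$ is the It\^o integral of $s\mapsto n^H(\hat X^k_s - \hat X^k_{[ns]/n})$ against the single component $W^j$,
\[
  \langle V^{n,k_1,j},V^{n,k_2,j}\rangle_t = n^{2H}\int_0^t \big(\hat X^{k_1}_s - \hat X^{k_1}_{\frac{[ns]}{n}}\big)\big(\hat X^{k_2}_s - \hat X^{k_2}_{\frac{[ns]}{n}}\big)\,\mr{d}s,
\]
\[
  \langle V^{n,k,j},W^j\rangle_t = n^{H}\int_0^t \big(\hat X^{k}_s - \hat X^{k}_{\frac{[ns]}{n}}\big)\,\mr{d}s .
\]
The plan is to analyse the increment $\hat X_s-\hat X_{[ns]/n}$ directly. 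Writing $j=[ns]$ and reading off \eqref{eqhatXiteration}, for $s\in[\tfrac jn,\tfrac{j+1}n)$ this increment splits into a drift part (of smaller order), the \emph{forward} diffusion part $\sum_{l}\sigma^{k}_l(\hat X_{j/n})\int_{j/n}^{s}K(s-u)\,\mr{d}W^l_u$, and the \emph{memory} part $\sum_{l}\sum_{i<j}\sigma^k_l(\hat X_{i/n})\int_{i/n}^{(i+1)/n}\big(K(s-u)-K(\tfrac jn-u)\big)\,\mr{d}W^l_u$ arising from the shift of the kernel argument. The crucial point, which distinguishes the SVE from the SDE case, is that for $H<1/2$ the memory part is of the same order $n^{-H}$ as the forward part and contributes to the limit; it is exactly this contribution that produces the factor $\sin\pi H$.

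For (i) I would first compute the limit of the expectation. Using the It\^o isometry together with the orthogonality of the increments over disjoint time intervals and over distinct components of $W$, the off-diagonal terms drop, and on the $j$-th interval one is left with $\sum_l \sigma^{k_1}_l(\hat X_{j/n})\sigma^{k_2}_l(\hat X_{j/n})$ times the frozen-coefficient variance $g_n(s)=\int_0^{j/n}\big(K(s-u)-K(\tfrac jn-u)\big)^2\,\mr{d}u+\int_{j/n}^s K(s-u)^2\,\mr{d}u$. Replacing $\hat X_{j/n}$ by $X_s$ is legitimate thanks to the uniform approximation $\|\hat X-X\|_\infty\to0$ and the uniform continuity of the coefficients, together with the fact that the kernel difference concentrates the memory mass near $u=\tfrac jn$. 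The self-similarity $K(\lambda\cdot)=\lambda^{H-1/2}K(\cdot)$ then gives, with $\delta=s-\tfrac jn$, the scaling $g_n(s)\sim \delta^{2H}A_H$, where $A_H=\int_0^\infty\big(K(y+1)-K(y)\big)^2\,\mr{d}y+\tfrac{1}{2H\,\Gamma(H+\frac12)^2}$. Integrating $\delta^{2H}$ over each interval yields a factor $1/(2H+1)$, and the Riemann sum over the $[nt]$ intervals converges to $\tfrac{A_H}{2H+1}\sum_l\int_0^t\sigma^{k_1}_l(X_s)\sigma^{k_2}_l(X_s)\,\mr{d}s$. Finally, $A_H$ is precisely the variance of the time-one Mandelbrot--Van Ness representation with kernel $K$, so the classical normalisation identity $A_H=1/(\Gamma(2H+1)\sin\pi H)$ gives the stated constant $\tfrac{A_H}{2H+1}=1/(\Gamma(2H+2)\sin\pi H)$; at $H=\tfrac12$ the memory integral vanishes and one recovers $1/2$.

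To pass from convergence of the mean to convergence in $L_1$ I would write the product $n^{2H}(\hat X^{k_1}_s-\hat X^{k_1}_{j/n})(\hat X^{k_2}_s-\hat X^{k_2}_{j/n})$ on each interval as its $\mathscr F_{j/n}$-conditional mean plus a fluctuation. The fluctuations containing a forward factor are increments of an $\{\mathscr F_{j/n}\}$-martingale, and their contribution is controlled in $L_2$ by summing the per-interval variances, which are $o(1)$. The genuinely delicate term is the memory--memory contribution $n^{2H}\int_0^t \mathrm{mem}^{k_1}_s\,\mathrm{mem}^{k_2}_s\,\mr{d}s$: it is $\mathscr F$-measurable and its convergence to the corresponding part of the limit is a law-of-large-numbers statement for a quadratic functional of the Gaussian memory field rather than a martingale bound. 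This is the main obstacle; handling it (uniformly freezing the slowly varying coefficients across the long-range memory, and showing that the variance of this functional vanishes) is where the real work lies, and the Gamma-function identity above is what makes the two pieces recombine into the single clean constant.

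Statement (ii) is the easier analogue: it suffices to show $S_n:=n^H\int_0^t(\hat X^k_s-\hat X^k_{[ns]/n})\,\mr{d}s\to0$ in $L_1$, which I would obtain by bounding the $L_2$ norm of $S_n$. The drift contribution is $O(n^{-1/2})$. For the forward diffusion part a stochastic Fubini rewrites the per-interval contribution as $\int_{j/n}^{(j+1)/n}\tfrac{((j+1)/n-u)^{H+1/2}}{\Gamma(H+3/2)}\,\mr{d}W^l_u$; these are orthogonal across $j$ with total variance $O(n^{-1})$. The memory part again has mean zero, and its variance is shown to vanish by the same kernel-difference estimates used in (i). Hence $S_n\to0$, so $\langle V^{n,k,j},W^j\rangle_t\to0$ in $L_1$.
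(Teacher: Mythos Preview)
Your decomposition and overall strategy coincide with the paper's: split $\hat X_s-\hat X_{[ns]/n}$ into drift, \emph{forward} diffusion and \emph{memory} diffusion parts, discard the drift as lower order, and identify the limiting constant via the self-similarity scaling plus the identity $2H\int_0^\infty\big((r+1)^{H-1/2}-r^{H-1/2}\big)^2\,\mr{d}r+1=\Gamma(H+\tfrac12)^2/(\Gamma(2H)\sin\pi H)$. The forward--memory cross terms are indeed handled by a conditioning/martingale argument, as you suggest.

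The genuine gap is precisely where you locate it yourself: the $L_2$ control of the memory--memory term. You say this ``is where the real work lies'' and describe it only as a law-of-large-numbers for a quadratic Gaussian functional, without an argument. This is not a routine step, and the paper devotes most of the proof to it. Two concrete ingredients are needed that your sketch does not supply. First, one cannot simply compute the variance of the time integral as a sum of per-interval variances: the memory pieces at distinct times $v<s$ are correlated, so one writes $\mathbb E\big[\big(\int_0^t\ldots\,\mr{d}s\big)^2\big]=2\int_0^t\int_0^s\mathbb E[D_sD_v]\,\mr{d}v\,\mr{d}s$ and must show the integrand vanishes pointwise for $v<s$ with a dominated bound. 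The key estimate that makes this work is the deterministic kernel decay
\[
  n^{2H}\int_0^{[nv]/n}\big(K(s-u)-K(\tfrac{[ns]}{n}-u)\big)\big(K(v-u)-K(\tfrac{[nv]}{n}-u)\big)\,\mr{d}u\;\xrightarrow[n\to\infty]{}\;0,\qquad v<s,
\]
proved by the substitution $y=\tfrac{[nv]}{n}-u$, $z=y/\delta_{(n,v)}$ and dominated convergence (the integrand is bounded by an integrable function of $z$ since one factor stays $\leq 2z^{H-1/2}$ and the other tends to zero for $v<s$). This is stated and used repeatedly in the paper as an auxiliary lemma; without it the cross-time covariance $\mathbb E[D_sD_v]$ cannot be shown to vanish. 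Second, the \emph{diagonal} contribution of the memory--memory term (its $\mr{d}t$-part after the It\^o product formula) is not a martingale fluctuation at all; its convergence to the correct limit requires a Delattre--Jacod type averaging lemma: if $H^{(n)}\to H$ in $L_2(\mr{d}s\otimes\mr{d}\mathsf P)$ with $H$ continuous, then $\int_0^t H^{(n)}_s\,g(ns-[ns])\,\mr{d}s\to\int_0^1 g\cdot\int_0^t H_s\,\mr{d}s$ in $L_2$. This is what turns the factor $(n\delta_{(n,s)})^{2H}$ into $1/(2H+1)$ and simultaneously handles the freezing of the $\sigma$-coefficients across the memory (your remark that the kernel ``concentrates the memory mass near $u=\tfrac jn$'' is the right intuition, but the rigorous step is this averaging lemma combined with $\|\hat X_{\frac{[ns]+[-n\delta_{(n,s)}r]}{n}}-X_s\|_{L_4}\leq Cn^{-H}$).

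For part (ii) your ``same kernel-difference estimates'' reference is exactly right, but it is the \emph{same} missing estimate: the cross-time kernel integral above is again what kills the memory contribution to $\mathbb E[S_n^2]$.
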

Note that $\langle V^{n,k_1,i},V^{n,k_2,j} \rangle = 0$ and $\langle
V^{n,k_1,i},W^j \rangle = 0$ for $(k_1,k_2) \in \{1,\dots,d\}^2,
(i,j)\in\{1,\dots,m\}^2, i\neq j$. Then Lemma \ref{lemvnvnvnwquadlim}
and the results of Jacod~\cite{Jacod1997} lead us to specify the limit
distribution of $\V^{n}$ in Lemma \ref{lemvnlawconv}. 
\begin{lemma}\label{lemvnlawconv}
  The process $\V^{n}$ stably converges in law in $\mcl{C}_0$ to a
 continuous process $\V = \{V^{k,j}\}$ of the following form:
  \begin{equation*}
    V^{k,j} =  \frac{1}{\sqrt{\Gamma(2H+2)\sin \pi H}}
     \sum_{l=1}^m\int_0^\cdot \sigma^k_l(X_s) \mr{d}B^{l,j}_s,\ \ 
1\leq k\leq d, \  1\leq j\leq m.
  \end{equation*}
  where $B$ is an $m^2$-dimensional standard Brownian motion, independent of $\msc{F}$ and defined on some extension of $(\Omega, \msc{F}, \mathsf{P})$.
\end{lemma}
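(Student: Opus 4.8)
The plan is to apply the stable limit theorem of Jacod~\cite{Jacod1997} for continuous local martingales, for which Lemma~\ref{lemvnvnvnwquadlim} supplies exactly the two required ingredients. First I would observe that, for each $k,j$, the process $V^{n,k,j}=n^H\int_0^\cdot(\hat{X}^k_s-\hat{X}^k_{\frac{[ns]}{n}})\,\mr{d}W^j_s$ is a continuous local martingale (in fact an $L_2$-martingale under the moment bounds prepared in Section~\ref{secpre}), so that $\V^n=\{V^{n,k,j}\}$ is a $dm$-dimensional continuous local martingale adapted to $\{\msc{F}_t\}$. Since $\V^n$ is continuous, the Lindeberg-type negligibility condition in Jacod's theorem is automatic, and the whole content of the hypotheses reduces to the convergence of the brackets.

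For the bracket condition I would combine Lemma~\ref{lemvnvnvnwquadlim}(i) with the remark that $\langle V^{n,k_1,i},V^{n,k_2,j}\rangle=0$ for $i\neq j$, which together yield, for every $t$ and all indices,
\begin{equation*}
  \langle V^{n,k_1,i},V^{n,k_2,j}\rangle_t \xrightarrow[n\to\infty]{L_1}
  \delta_{ij}\,\frac{1}{\Gamma(2H+2)\sin\pi H}\sum_{l=1}^m\int_0^t \sigma^{k_1}_l(X_s)\sigma^{k_2}_l(X_s)\,\mr{d}s =: C^{(k_1,i),(k_2,j)}_t,
\end{equation*}
an $\msc{F}$-adapted, absolutely continuous limit; $L_1$-convergence in particular gives convergence in probability, as Jacod's theorem requires. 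For the nesting (orthogonality) condition I would combine Lemma~\ref{lemvnvnvnwquadlim}(ii) with $\langle V^{n,k,i},W^j\rangle=0$ for $i\neq j$ to obtain $\langle V^{n,k,j},W^i\rangle_t\to 0$ in $L_1$ for every $i,j$. Because $\{\msc{F}_t\}$ is the Brownian filtration of $W$, every bounded $\msc{F}$-martingale is a stochastic integral against $W$, so the vanishing of the cross-brackets with each $W^i$ is precisely what forces the limit to be orthogonal to $\msc{F}$. Jacod's theorem then yields stable convergence of $\V^n$ in $\mcl{C}_0$ to a continuous process $\V$ which, conditionally on $\msc{F}$, is a centred Gaussian martingale with $\msc{F}$-measurable bracket $C$ and which is independent of $\msc{F}$ on a suitable extension.

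It remains to identify $\V$ with the stated representation. Introducing an $m^2$-dimensional standard Brownian motion $B=\{B^{l,j}\}$ independent of $\msc{F}$ on the extension, I would set $V^{k,j}=\frac{1}{\sqrt{\Gamma(2H+2)\sin\pi H}}\sum_{l=1}^m\int_0^\cdot \sigma^k_l(X_s)\,\mr{d}B^{l,j}_s$ and simply compute its brackets: since the $B^{l,j}$ are mutually independent, $\langle V^{k_1,i},V^{k_2,j}\rangle$ retains only the diagonal $i=j$ and sums $\sigma^{k_1}_l\sigma^{k_2}_l$ over $l$, reproducing exactly the matrix $C$ above, while independence of $B$ from $\msc{F}$ makes each $V^{k,j}$ conditionally Gaussian and $\msc{F}$-orthogonal. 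Since a continuous, conditionally Gaussian, $\msc{F}$-orthogonal martingale is determined in ($\msc{F}$-stable) law by its bracket, this candidate has the same stable law as the limit produced above, which identifies $\V$.

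The main obstacle is the verification, rather than the computation, of the independence of the limit from $\msc{F}$: one must confirm that the cross-bracket condition of Lemma~\ref{lemvnvnvnwquadlim}(ii) is applied against a sufficiently rich family of martingales --- here the coordinates of $W$, which suffice precisely because of their predictable representation property for $\{\msc{F}_t\}$ --- and that Jacod's hypotheses are formulated at the functional (process) level, so that the conclusion is genuine stable convergence in $\mcl{C}_0$ rather than mere convergence of finite-dimensional distributions. The C-tightness underlying the $\mcl{C}_0$-convergence is automatic from the convergence of the continuous brackets, so no separate tightness estimate is needed at this stage.
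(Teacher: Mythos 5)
Your proposal is correct and follows essentially the same route as the paper: both apply Jacod's stable limit theorem for continuous local martingales, feeding it the bracket limits of Lemma~\ref{lemvnvnvnwquadlim} together with the trivially vanishing off-diagonal brackets, and then identify the conditionally Gaussian limit with the stated $B$-integral representation (the paper invokes Proposition 1-4 of Jacod~\cite{Jacod1997} for this last step, where you instead verify the candidate's brackets directly, which amounts to the same thing). The only cosmetic remark is that you need not appeal to the predictable representation property of $W$ (the filtration is not assumed to be Brownian): since each $V^{n,k,j}$ is itself a stochastic integral against $W$, its bracket with any martingale orthogonal to $W$ vanishes identically, so the orthogonality hypothesis of Jacod's theorem reduces to Lemma~\ref{lemvnvnvnwquadlim}(ii) without further argument.
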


We also show that the second integral term of (\ref{eqorgSVE}) vanishes in $\mcl{C}^{H-\epsilon}_0$ as $n$ goes to infinity.
\begin{lemma}\label{lemsecondtermzero}
  For all $i\in \{1,\dots,d\}$,
  \begin{equation*}
    \int_0^t K(t-s)n^H  \nabla b^i(\hat{X}_{\frac{[ns]}{n}})^\top (\hat{X}_s - \hat{X}_{\frac{[ns]}{n}}) \mr{d}s\xrightarrow[\text{in }\mathsf{P} \text{ in } \mcl{C}^{H-\epsilon}_0]{n\to\infty} 0.
  \end{equation*}
\end{lemma}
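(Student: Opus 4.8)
The plan is to show that the term in question, which I denote $F^{n}=(F^n_t)_{t\in[0,T]}$, converges to $0$ in $\mcl{C}^{H-\epsilon}_0$ in probability by establishing (i) tightness of $\{F^n\}$ in $\mcl{C}^{H-\epsilon}_0$ and (ii) convergence of its finite-dimensional distributions to $0$; since the candidate limit is the deterministic process $0$, (i) and (ii) give convergence in law to $0$, which for a deterministic limit upgrades to convergence in probability. Throughout write $\tau_n(s)=\frac{[ns]}{n}$ and, using the convolution form of \eqref{eqapproxSVE}, decompose the increment as $\hat{X}_s-\hat{X}_{\tau_n(s)}=A^n_s+M^n_s$, where
\[
A^n_s=\int_{\tau_n(s)}^{s}K(s-u)b(\hat{X}_{\tau_n(u)})\,\mr{d}u+\int_0^{\tau_n(s)}\bigl(K(s-u)-K(\tau_n(s)-u)\bigr)b(\hat{X}_{\tau_n(u)})\,\mr{d}u
\]
is the drift part and $M^n_s=\int_0^s\kappa_n(s,u)\sigma(\hat{X}_{\tau_n(u)})\,\mr{d}W_u$, with $\kappa_n(s,u)=K(s-u)-\mathbf{1}_{\{u<\tau_n(s)\}}K(\tau_n(s)-u)$, is the martingale part.

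For the tightness (i) I would exploit exactly the property emphasized in the introduction: convolution against $K$ is a bounded linear map $L_p([0,T])\to\mcl{C}^{H-\epsilon/2}_0$ whenever $p$ is large enough that $H-\epsilon/2\le H+\tfrac12-\tfrac1p$ (Riemann--Liouville fractional integration of order $H+\tfrac12$ gains $H+\tfrac12-\tfrac1p$ \hd regularity). Hence $\|F^n\|_{\mcl{C}^{H-\epsilon/2}_0}\le C\,\|g_n\|_{L_p([0,T])}$ with $g_n(s)=n^{H}\nabla b^i(\hat{X}_{\tau_n(s)})^\top(\hat{X}_s-\hat{X}_{\tau_n(s)})$. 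Since $\nabla b^i$ is bounded and the standard increment estimate gives $\EXP{|\hat{X}_s-\hat{X}_{\tau_n(s)}|^p}\le Cn^{-Hp}$ uniformly in $s$ and $n$, we obtain $\EXP{|g_n(s)|^p}\le C$ and therefore $\sup_n\EXP{\|F^n\|_{\mcl{C}^{H-\epsilon/2}_0}^p}<\infty$ for $p$ large. Balls of $\mcl{C}^{H-\epsilon/2}_0$ being relatively compact in $\mcl{C}^{H-\epsilon}_0$, this uniform moment bound yields tightness in $\mcl{C}^{H-\epsilon}_0$ via the \hd tightness criterion of the Appendix.

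For (ii) it suffices to show $\EXP{|F^n_t|^2}\to0$ for each fixed $t$. The drift contribution is harmless: a direct kernel computation gives $\|A^n_s\|_{L_2}\le Cn^{-(H+1/2)}$, so the drift part of $F^n_t$ is $O(n^{-1/2})$ in $L_2$. For the martingale part I would apply the stochastic Fubini theorem to write
\[
F^{n,M}_t:=n^{H}\int_0^t K(t-s)\nabla b^i(\hat{X}_{\tau_n(s)})^\top M^n_s\,\mr{d}s=n^{H}\int_0^t G_n(t,u)^\top\sigma(\hat{X}_{\tau_n(u)})\,\mr{d}W_u,\quad G_n(t,u)=\int_u^t K(t-s)\kappa_n(s,u)\nabla b^i(\hat{X}_{\tau_n(s)})\,\mr{d}s,
\]
and then, using Itô's isometry, the boundedness of $\nabla b^i$, and the uniform $L_2$ bound on $\sigma(\hat{X}_{\tau_n(u)})$, estimate $\EXP{|F^{n,M}_t|^2}\le Cn^{2H}\int_0^t\bigl(\int_u^t K(t-s)|\kappa_n(s,u)|\,\mr{d}s\bigr)^2\mr{d}u$. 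The gain comes from the fine structure of $\kappa_n$: splitting the inner integral at $|s-u|\sim 1/n$, using $|\kappa_n(s,u)|\le K(s-u)+\mathbf{1}_{\{u<\tau_n(s)\}}K(\tau_n(s)-u)$ near the diagonal and $|\kappa_n(s,u)|\le Cn^{-1}(s-u)^{H-3/2}$ away from it (mean value applied to $K$ over an interval of length $<1/n$), both regimes contribute $O(n^{-(H+1/2)})$ to the inner integral, whence $\EXP{|F^{n,M}_t|^2}\le Cn^{2H}\cdot n^{-(2H+1)}=O(n^{-1})\to0$.

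The main obstacle is this last kernel estimate: the bound $|\kappa_n(s,u)|\le Cn^{-1}(s-u)^{H-3/2}$ is singular and non-integrable at the diagonal, so the crossover between the two regimes must be balanced precisely, and the boundary regions $u\downarrow0$ and $s-u\downarrow0$, where $K$ and its mean-value bound blow up, have to be treated separately and shown to contribute negligibly. Justifying the stochastic Fubini interchange and the increment moment bound $\EXP{|\hat{X}_s-\hat{X}_{\tau_n(s)}|^p}\le Cn^{-Hp}$, which I would take from the preliminary estimates of Section~\ref{secpre}, are comparatively routine. Once $\EXP{|F^n_t|^2}\to0$ and tightness in $\mcl{C}^{H-\epsilon}_0$ are in hand, the finite-dimensional distributions converge to $0$ and the asserted convergence in probability in $\mcl{C}^{H-\epsilon}_0$ follows.
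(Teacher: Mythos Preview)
Your tightness argument in (i) is correct and in fact more streamlined than the paper's: the Riemann--Liouville mapping $L_p([0,T])\to\mcl{C}^{H-\epsilon/2}_0$ together with $\sup_{s,n}\EXP{|g_n(s)|^p}<\infty$ gives exactly the uniform \hd moment bound needed for Theorem~\ref{thmvntight}. The drift estimate $\|A^n_s\|_{L_2}=O(n^{-(H+1/2)})$ is also correct and matches \eqref{eqkernelcondition}.

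There is, however, a genuine gap in the martingale part. Your stochastic Fubini step produces the integrand
\[
G_n(t,u)=\int_u^t K(t-s)\,\kappa_n(s,u)\,\nabla b^i(\hat{X}_{\tau_n(s)})\,\mr{d}s,
\]
which involves $\hat{X}_{\tau_n(s)}$ for $s>u$ and is therefore \emph{not} $\mcl{F}_u$-adapted. The resulting expression is not an It\^o integral, so the isometry bound $\EXP{|F^{n,M}_t|^2}\le Cn^{2H}\int_0^t(\int_u^t K(t-s)|\kappa_n(s,u)|\,\mr{d}s)^2\mr{d}u$ cannot be obtained this way. Replacing $\nabla b^i(\hat{X}_{\tau_n(s)})$ by its uniform bound destroys the cancellation (it only yields $\EXP{|F^{n,M}_t|^2}\le C$, not $o(1)$), so the adaptedness issue is not cosmetic: the extra decay you need comes precisely from the inner $s$-integral of the \emph{signed} kernel, and that requires the coefficient to pass inside the $\mr{d}W_u$-integral.

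The paper circumvents this entirely. It recognises the term as $\sum_k\int_0^t K(t-s)\,\partial_k b^i(\hat{X}_{\tau_n(s)})\,\mr{d}\langle V^{n,k,j},W^j\rangle_s$, a Lebesgue--Stieltjes integral against the bounded-variation process $\langle V^{n,k,j},W^j\rangle$. Lemma~\ref{lemvnvnvnwquadlim}(ii) (whose proof in Section~4.2 does the delicate double-integral expansion you are effectively trying to bypass, using Lemma~\ref{lemintkerneldiffertime} for the off-diagonal cancellation) gives $\langle V^{n,k,j},W^j\rangle\to0$; uniform tightness of this BV sequence plus Theorem~7.10 of Kurtz--Protter then yields convergence of the Stieltjes integral, and the continuity of $\mcl{J}^\alpha$ (Lemma~\ref{lemJalphaconti}) transfers this to $\mcl{C}^{H-\epsilon}_0$. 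Your kernel estimate $\int_u^t K(t-s)|\kappa_n(s,u)|\,\mr{d}s=O(n^{-(H+1/2)})$ is morally the same computation as Lemma~\ref{lemintkerneldiffertime}, but to make your route work you would have to expand $\EXP{|F^{n,M}_t|^2}$ directly as a double $\mr{d}s\,\mr{d}v$ integral and control the cross-covariances $\EXP{c_s^\top M^n_s\, c_v^\top M^n_v}$ with the random $c_s,c_v$ still present, which is essentially the computation of Section~4.2.
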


The difference between both sides of (\ref{eqorgSVE}) converges to zero in $\mcl{C}^{H-\epsilon}_0$ for any $\epsilon\in(0,H)$ as $n$ goes to infinity as we prove in Lemma \ref{lemdiffvanish}.
\begin{lemma}\label{lemdiffvanish}
  The $\gamma$-\hd norm of the difference between both sides of
 (\ref{eqorgSVE}) tends to zero in $L_p$ for any $\gamma \in (0,H)$ and $p\geq 1$.
\end{lemma}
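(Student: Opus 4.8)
The plan is to exhibit the difference between the two sides of \eqref{eqorgSVE} as a finite sum of convolutions of explicit residual coefficients against $K$, and then to kill each convolution in $\mcl{C}^\gamma_0$ in $L_p$ using the continuity of convolution with $K$ on \hd spaces. I would start from the exact identity
\[ U^{n,i}_t = \int_0^t K(t-s)\,n^H\!\left(b^i(X_s)-b^i(\hat X_{\frac{[ns]}{n}})\right)\mr{d}s + \sum_{j=1}^m\int_0^t K(t-s)\,n^H\!\left(\sigma^i_j(X_s)-\sigma^i_j(\hat X_{\frac{[ns]}{n}})\right)\mr{d}W^j_s, \]
split each coefficient through $\hat X_s$, and expand every bracket by the first-order Taylor formula with integral remainder. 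Using $X_s-\hat X_s=n^{-H}\U^n_s$ and the associativity identity $\int_0^t K(t-s)g(s)\,\mr{d}V^{n,k,j}_s=\int_0^t K(t-s)g(s)\,n^H(\hat X^k_s-\hat X^k_{\frac{[ns]}{n}})\,\mr{d}W^j_s$, the four terms on the right of \eqref{eqorgSVE} are exactly those obtained by freezing the inner derivatives at $\hat X_{\frac{[ns]}{n}}$. Hence the difference is the sum of four convolutions whose residual coefficients all have the common shape $A^n(s)Z^n(s)$, where $A^n(s)$ is a gradient increment such as $\int_0^1[\nabla b^i(\hat X_s+\theta(X_s-\hat X_s))-\nabla b^i(\hat X_{\frac{[ns]}{n}})]\,\mr{d}\theta$ and $Z^n(s)$ is one of the $O(1)$ factors $\U^n_s$ or $n^H(\hat X_s-\hat X_{\frac{[ns]}{n}})$; crucially only first derivatives enter, so the $C^1$ hypothesis with uniformly continuous derivatives is exactly what is needed.

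Second, I would prove that the residuals are uniformly small. Each $A^n(s)$ is bounded by $2\|\nabla b^i\|_\infty$ (resp.\ $2\|\nabla\sigma^i_j\|_\infty$) and, by uniform continuity of the derivatives, by $\omega(\Delta^n)$, where $\omega$ is a bounded modulus of continuity and $\Delta^n:=\sup_{s\le T}\big(|X_s-\hat X_{\frac{[ns]}{n}}|+|\hat X_s-\hat X_{\frac{[ns]}{n}}|\big)$. From the consistency of the scheme and the vanishing mesh I would get $\Delta^n\to0$ in probability, whence bounded convergence gives $\|\omega(\Delta^n)\|_{L_q}\to0$ for every $q$. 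The factors $Z^n(s)$ are bounded in $L_q$ uniformly in $n$ and $s$, and their suprema $\|\sup_{s\le T}|Z^n(s)|\|_{L_q}$ are bounded uniformly in $n$, by the moment estimates for the SVE and its scheme in Section~\ref{secpre}. Hölder's inequality then yields $\sup_{s\le T}\|A^n(s)Z^n(s)\|_{L_q}\to0$ and $\|\sup_{s\le T}|A^n(s)Z^n(s)|\|_{L_q}\to0$ for every $q$.

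Third, I would transfer this to the \hd norm. For the $\mr{d}s$-convolutions the deterministic bound $\|\int_0^\cdot K(\cdot-s)R(s)\,\mr{d}s\|_{\mcl{C}^\gamma_0}\le C\|R\|_\infty$, valid for $\gamma<H$ since $K$ is a fractional kernel of order $H+\tfrac12$, reduces the problem to $\|\sup_{s\le T}|R(s)|\|_{L_p}\to0$, which is the second estimate of the previous step. For the $\mr{d}W^j$-convolutions I would use the stochastic-convolution \hd estimate of the form $\EXP{\|\int_0^\cdot K(\cdot-s)R(s)\,\mr{d}W^j_s\|_{\mcl{C}^\gamma_0}^p}\le C\sup_{s\le T}\EXP{|R(s)|^p}$, valid for $p$ large enough that $\gamma<H-1/p$, which reduces the problem to the first estimate of the previous step. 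Each of the four convolutions therefore tends to $0$ in $\mcl{C}^\gamma_0$ in $L_p$ for $p$ large, hence for all $p\ge1$ by monotonicity of $L_p$ norms on the probability space, and for every $\gamma\in(0,H)$ by choosing $p$ large; summing over the four contributions proves the lemma.

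The main obstacle will be the stochastic-convolution estimate used in the third step. Because $K$ is singular at the origin, the \hd regularity of $\int_0^\cdot K(\cdot-s)R(s)\,\mr{d}W^j_s$ is not visible from the usual moment bounds for It\^o integrals and has to be extracted from the \hd mapping property of convolution with $K$ combined with a Garsia--Rodemich--Rumsey and Burkholder--Davis--Gundy argument, the delicate point being to keep $\gamma$ below $H$ while $p\to\infty$. A secondary hazard is circularity: the uniform $L_p$ bound on $\|\U^n\|_\infty$ invoked in the second step must be obtained a priori from the moment estimates for the SVE and its discretization, and not read off from \eqref{eqorgSVE}.
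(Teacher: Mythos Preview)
Your approach is essentially the paper's: Taylor-expand the coefficient increments with integral remainder, factor out the $O(1)$ quantities $\U^n_s$ and $n^H(\hat X_s-\hat X_{[ns]/n})$, bound the gradient increments by a modulus of continuity $\omega$ evaluated at a vanishing argument, and pass to the \hd norm via Kolmogorov. The paper does exactly this, using Lemma~\ref{lemcalcforlem}(3),(4) to obtain $\EXP{|\text{increment}|^p}\le Ch^{Hp}\sup_s\EXP{|R(s)|^p}$ for both the $\mr{d}s$- and the $\mr{d}W$-convolutions, and then Kolmogorov for the $\mcl{C}^\gamma_0$ bound in $L_p$.

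There is one genuine gap. For the $\mr{d}s$-convolutions you invoke the pathwise bound $\|\int_0^\cdot K(\cdot-s)R(s)\,\mr{d}s\|_{\mcl{C}^\gamma_0}\le C\|R\|_\infty$, which forces you to control $\|\sup_{s\le T}|Z^n(s)|\|_{L_q}$. You assert this is bounded uniformly in $n$ ``by the moment estimates \dots\ in Section~\ref{secpre}'', but Section~\ref{secpre} does not deliver it: Lemma~\ref{lemsupdiffpeval} only gives $\EXP{\sup_s|X_s-\hat X_s|^p}\le Cn^{-p(H-\epsilon)}$, so $\|\sup_s|\U^n_s|\|_{L_p}\le Cn^{\epsilon}$, and the bound for $n^H\sup_s|\hat X_s-\hat X_{[ns]/n}|$ via Lemma~\ref{lemhatXhd} has the same $\epsilon$-loss. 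Since $\omega(\Delta^n)$ carries no rate under mere uniform continuity of the derivatives, the product $\|\omega(\Delta^n)\|_{L_{2p}}\cdot Cn^\epsilon$ need not vanish. Your own ``secondary hazard'' is precisely this, and the resolution is not to obtain the sup bound a priori but to avoid it: use the same moment-based increment estimate for the $\mr{d}s$-convolutions as for the $\mr{d}W$-convolutions (Lemma~\ref{lemcalcforlem}(3), then Kolmogorov). That estimate requires only $\sup_{s}\EXP{|Z^n(s)|^q}$, which \emph{is} uniformly bounded by Lemmas~\ref{lemhatXhd} and~\ref{lemdiffpeval}. With this correction your argument and the paper's coincide.
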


Denote by $\mathcal{D}_d$ the space of the cadlag functions on $[0,T]$
taking values in $\mathbb{R}^d$ equipped with the Skorokhod topology. 
Define $\varphi_n : [0,T] \to [0,T]$ by $\varphi_n(t) = [nt]/n$.
The above lemmas are used in the proof of Lemma~\ref{lemchar}.

\begin{lemma}\label{lemchar}
 If the sequence $$(\U^n,\V^n,\{ \nabla b^i(\hat{X}\circ \varphi_n)\}_i,
 \{\partial_k\sigma^i_j(\hat{X}\circ \varphi_n)\}_{ijk})$$ converges in law in 
$\mathcal{C}^{H-\epsilon}_0 \times \mathcal{C}_0\times
 \mathcal{D}_{d^2} \times \mathcal{D}_{d^2m}$ to
$$(\U,\V,\{ \nabla b^i(X)\}_i,
 \{\partial_k\sigma^i_j(X)\}_{ijk}),$$ 
then $\U$ is the solution of (\ref{eqlimitSVE}).
\end{lemma}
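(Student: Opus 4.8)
The plan is to pass to the limit directly in the decomposition (\ref{eqorgSVE}), exploiting the convergence of the tuple together with the continuity of convolution against $K$ on \hd spaces, and then to invoke uniqueness for (\ref{eqlimitSVE}). First I would strip off the inessential terms: by Lemma~\ref{lemdiffvanish} the left-hand side of (\ref{eqorgSVE}) agrees with its right-hand side up to a remainder whose $\gamma$-\hd norm vanishes in $L_p$, and by Lemma~\ref{lemsecondtermzero} the second integral (the $n^H\nabla b^i$ term) vanishes in probability in $\mcl{C}^{H-\epsilon}_0$. Discarding these, $U^{n,i}$ coincides, up to an error negligible in \hd norm, with the sum of the drift convolution $\int_0^\cdot K(\cdot-s)\nabla b^i(\hat X_{\varphi_n(s)})^\top\U^n_s\,\mr{d}s$, the feedback convolution $\sum_j\int_0^\cdot K(\cdot-s)\nabla\sigma^i_j(\hat X_{\varphi_n(s)})^\top\U^n_s\,\mr{d}W^j_s$, and the forcing convolution $\sum_{j,k}\int_0^\cdot K(\cdot-s)\partial_k\sigma^i_j(\hat X_{\varphi_n(s)})\,\mr{d}V^{n,k,j}_s$. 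Passing to a Skorokhod representation of the convergent tuple (augmented, by stability, with the $\msc{F}$-measurable pair $(W,X)$, whose limit is itself), I would then identify the limit of each of the three terms.

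The drift convolution is the easy one. The map sending a bounded cadlag coefficient $A$ and a path $u\in\mcl{C}^{H-\epsilon}_0$ to $\int_0^\cdot K(\cdot-s)A_s^\top u_s\,\mr{d}s$ is continuous into $\mcl{C}^{H-\epsilon}_0$, because convolution against $K$ gains \hd regularity and is bounded by the supremum norm of its argument. Since $\nabla b^i(\hat X_{\varphi_n})^\top\U^n$ converges uniformly (Skorokhod convergence to the continuous limit $\nabla b^i(X)^\top\U$), the continuous mapping theorem yields the limit $\int_0^\cdot K(\cdot-s)\nabla b^i(X_s)^\top\U_s\,\mr{d}s$, the drift part of (\ref{eqlimitSVE}).

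The stochastic convolutions are the main obstacle, since stochastic integration is not continuous under weak convergence and the singularity $K(t-s)\sim(t-s)^{H-1/2}$ at $s=t$ forbids a naive integration by parts. The device I would use is the completely monotone representation $K(t)=\int_0^\infty e^{-\lambda t}\nu(\mr{d}\lambda)$ of the fractional kernel, which, via stochastic Fubini, rewrites each stochastic convolution $\int_0^\cdot K(\cdot-s)h_s\,\mr{d}M_s$ as $\int_0^\infty Y^{\lambda}_\cdot\,\nu(\mr{d}\lambda)$, where $Y^{\lambda}_t=\int_0^t e^{-\lambda(t-s)}h_s\,\mr{d}M_s$ solves an Ornstein--Uhlenbeck-type equation driven by $h\,\mr{d}M$ and is therefore an honest semimartingale functional. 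For the forcing term ($M=V^{n,k,j}$) the integrators converge to $\V$; the $L_2$-boundedness and the convergence of brackets from Lemma~\ref{lemvnvnvnwquadlim} furnish the uniformly controlled variations condition, so that a Kurtz--Protter / Jacod stable-convergence theorem for stochastic integrals~\cite{KurtzProtterWong,Jacod1997} passes each $Y^\lambda$ to its limit; integrating against $\nu$ and recombining gives $\sum_{j,k}\int_0^\cdot K(\cdot-s)\partial_k\sigma^i_j(X_s)\,\mr{d}V^{k,j}_s$. For the feedback term ($M=W^j$, fixed and surviving in the limit by stability) the same scheme applies with the constant integrator trivially satisfying the condition, and a Burkholder--Davis--Gundy estimate shows that convolution against $K$ of an Itô integral is continuous in probability on $\mcl{C}^{H-\epsilon}_0$, yielding the limit $\int_0^\cdot K(\cdot-s)\nabla\sigma^i_j(X_s)^\top\U_s\,\mr{d}W^j_s$.

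Finally I would substitute the explicit form of $\V$ from Lemma~\ref{lemvnlawconv} into the forcing limit, turning it into $\frac{1}{\sqrt{\Gamma(2H+2)\sin\pi H}}\sum_{j,k,l}\int_0^\cdot K(\cdot-s)\partial_k\sigma^i_j(X_s)\sigma^k_l(X_s)\,\mr{d}B^{l,j}_s$. Collecting the three limits together with the negligible remainder and the vanishing second integral shows that $\U$ satisfies (\ref{eqlimitSVE}), the sign discrepancy in the $B$-integral being immaterial since it only replaces the independent Brownian motion $B$ by $-B$, which has the same law. As (\ref{eqlimitSVE}) is a linear Volterra equation with a unique continuous solution, this identifies $\U$ and completes the proof. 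The delicate point throughout is that $K$ is merely square-integrable near its singularity, so every estimate must rest on the smoothing effect of convolution against $K$ rather than on pathwise regularity of the kernel itself.
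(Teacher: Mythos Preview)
Your overall architecture---strip off the remainder via Lemmas~\ref{lemsecondtermzero} and \ref{lemdiffvanish}, pass to the limit in each surviving convolution through a Kurtz--Protter argument, and read off (\ref{eqlimitSVE})---is the same as the paper's. The device you use to cope with the singular kernel, however, is genuinely different. You propose the completely monotone representation $K(t)=\int_0^\infty e^{-\lambda t}\,\nu(\mr{d}\lambda)$ so as to reduce each stochastic convolution to a $\nu$-superposition of Ornstein--Uhlenbeck-type processes $Y^\lambda$, apply Kurtz--Protter at each fixed $\lambda$, and then integrate. The paper instead observes (Proposition~\ref{propregfracderiv}) that $\int_0^t K(t-s)\,\mr{d}Y_s=(\mcl{J}^\alpha Y)(t)$ with $\mcl{J}^\alpha$ the Marchaud fractional derivative, which by Lemma~\ref{lemJalphaconti} is a \emph{bounded} linear map $\mcl{C}^{1/2-\epsilon}_0\to\mcl{C}^{H-\epsilon}_0$. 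It therefore bundles all three integrals into the single semimartingale $\mathbf{\Phi}^n$ \emph{without} the kernel, invokes Kurtz--Protter once to get $(\U^n,\mathbf{\Phi}^n)\to(\U,\mathbf{\Phi})$, upgrades this to convergence in $\mcl{C}^{1/2-\epsilon}_0$ by a direct Kolmogorov estimate on $\mathbf{\Phi}^n$, and finishes by the continuous mapping theorem through $\mcl{J}^\alpha$.

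The paper's route is shorter and, more to the point, it closes a gap that your sketch leaves open: after obtaining $Y^{\lambda,n}\to Y^\lambda$ for each $\lambda$ you still need to justify $\int_0^\infty Y^{\lambda,n}\,\nu(\mr{d}\lambda)\to\int_0^\infty Y^\lambda\,\nu(\mr{d}\lambda)$, and since $\nu(\mr{d}\lambda)\propto\lambda^{-H-1/2}\,\mr{d}\lambda$ is infinite at both ends this demands a uniform-in-$\lambda$ tightness or domination estimate that you do not provide. Such an estimate can presumably be extracted, but it is exactly the work that the boundedness of $\mcl{J}^\alpha$ between \hd spaces does for free in the paper's argument. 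What your approach would buy, were it completed, is a purely Markovian-lift viewpoint that avoids fractional-calculus machinery; what the paper's approach buys is a one-line passage to the limit once the \hd tightness of $\mathbf{\Phi}^n$ is in hand.
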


We will additionally show the following lemma.
\begin{lemma}\label{lemtightUn}
The sequence $\U^n$ is tight in $\mcl{C}^{H-\epsilon}_0$ for any $\epsilon\in(0,H)$.
\end{lemma}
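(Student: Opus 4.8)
The plan is to deduce Hölder tightness from uniform moment estimates on the increments of $\U^n$, via the Kolmogorov-type tightness criterion for $\mcl{C}^\lambda_0$ proved in the Appendix. Since $X_0 = \hat{X}_0$ we have $\U^n_0 = 0$, so it suffices to show that for some large $p$ there is a constant $C$, independent of $n$, with $\EXP{|U^{n,i}_t - U^{n,i}_s|^p}\le C|t-s|^{Hp}$ for all $s,t\in[0,T]$ and all $i$. Indeed $Hp>1$ and $(Hp-1)/p = H - 1/p > H-\epsilon$ as soon as $p>1/\epsilon$, and the criterion then yields tightness of $\U^n$ in $\mcl{C}^{H-\epsilon}_0$. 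By Lemma~\ref{lemdiffvanish} the difference between $U^{n,i}$ and the right-hand side of \eqref{eqorgSVE} tends to $0$ in $L_p$-Hölder norm, hence is bounded in $L_p$ uniformly in $n$; thus it is enough to bound the increments of the three terms $A^{n,i}$, $B^{n,i}$, $C^{n,i}$ on the right-hand side of \eqref{eqorgSVE}, where $A^{n,i}$ is the part linear in $\U^n$, $B^{n,i}$ is the drift correction treated in Lemma~\ref{lemsecondtermzero}, and $C^{n,i} = \sum_{j,k}\int_0^\cdot K(\cdot-s)\partial_k\sigma^i_j(\hat{X}\circ\varphi_n)\,\mr{d}V^{n,k,j}_s$.

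First I would establish the a priori bound $\sup_n\sup_{t\le T}\EXP{|U^n_t|^p}<\infty$. Using the boundedness of $\nabla b$ and $\nabla\sigma$, Jensen's inequality for the finite measure $K(t-s)\,\mr{d}s$, and the Burkholder--Davis--Gundy inequality for the martingale part, the linear term satisfies $\EXP{|A^n_t|^p}\le C\int_0^t (t-s)^{2H-1}\EXP{|U^n_s|^p}\,\mr{d}s$, the kernel being integrable since $2H-1>-1$. The forcing terms are bounded uniformly in $n$: for $B^n$ this follows from Lemma~\ref{lemsecondtermzero} and the scheme increment estimate $\EXP{|\hat{X}_s - \hat{X}_{[ns]/n}|^p}\le C n^{-Hp}$, which we also establish; for $C^n$ it follows from the Burkholder--Davis--Gundy inequality together with the uniform bound $\EXP{\langle V^{n,k,j}\rangle_t^{p/2}}\le C$, itself a consequence of the same increment estimate and consistent with the $L_1$-limit of Lemma~\ref{lemvnvnvnwquadlim}. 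A singular Grönwall inequality with the integrable kernel $(t-s)^{2H-1}$ then closes the estimate and yields the uniform sup bound.

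Next, for $s<t$ I would bound the increments term by term. Writing $C^n_t - C^n_s = \int_s^t K(t-u)\phi_u\,\mr{d}V^n_u + \int_0^s (K(t-u)-K(s-u))\phi_u\,\mr{d}V^n_u$ with $\phi = \partial_k\sigma^i_j(\hat{X}\circ\varphi_n)$ bounded, the Burkholder--Davis--Gundy inequality, the kernel estimates $\int_s^t K(t-u)^2\,\mr{d}u\le C|t-s|^{2H}$ and $\int_0^s (K(t-u)-K(s-u))^2\,\mr{d}u\le C|t-s|^{2H}$, and the quadratic-variation bound give $\EXP{|C^n_t - C^n_s|^p}\le C|t-s|^{Hp}$; this is the term that produces the borderline $H$ regularity. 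The increments of $A^n$ are estimated the same way, now using the a priori sup bound to treat $\nabla\sigma(\hat{X}\circ\varphi_n)^\top\U^n$ as an integrand bounded in $L_p$, so that its stochastic-convolution part again contributes order $|t-s|^{Hp}$ while the $\mr{d}s$ part is of higher order $|t-s|^{(H+1/2)p}$ by the regularizing property of $K$ as a map between Hölder spaces. Finally $B^n_t - B^n_s$ is of order $|t-s|^{(H+1/2)p}$ by the same convolution estimate, using $\EXP{|n^H(\hat{X}_s - \hat{X}_{[ns]/n})|^p}\le C$. Summing the three contributions gives the desired uniform increment bound.

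The main obstacle will be the a priori sup bound, since $\U^n$ appears inside its own defining equation through the linear term $A^n$ with the singular kernel $K$: the classical Grönwall inequality does not apply, and one must invoke a singular (fractional) Grönwall lemma for kernels of the form $(t-s)^{\beta-1}$ with $\beta = 2H>0$. A secondary technical point is the uniform control of the stochastic convolution $C^n$, which rests on the scheme increment estimate $\EXP{|\hat{X}_s - \hat{X}_{[ns]/n}|^p}\le C n^{-Hp}$; and since the exponent attainable in the increment bound is exactly $H$ and not better, one must take $p$ large to push the resulting Hölder exponent $H-1/p$ above the target $H-\epsilon$.
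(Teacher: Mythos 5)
Your proposal is correct and follows the same overall strategy as the paper: decompose $\U^n$ via \eqref{eqorgSVE}, bound the $p$-th moments of the increments of each term by $C|t-s|^{Hp}$ using the kernel estimates of Lemma~\ref{lemcalcforlem}-(3),(4), and conclude via Kolmogorov's continuity theorem and the \hd tightness criterion (Theorem~\ref{thmvntight}), with Lemmas~\ref{lemsecondtermzero} and \ref{lemdiffvanish} disposing of the drift-correction and remainder terms. The one place where you diverge — and where you create work for yourself — is the a priori bound $\sup_n\sup_{t\le T}\EXP{|\U^n_t|^p}<\infty$, which you identify as the main obstacle and propose to obtain by a singular (fractional) Gr\"onwall argument on the equation satisfied by $\U^n$. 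This is unnecessary: since $\U^n=n^H(X-\hat{X})$ by definition, the bound is exactly the content of Lemma~\ref{lemdiffpeval}, i.e.\ $\sup_t\EXP{|U^{n}_t|^p}=n^{Hp}\sup_t\EXP{|X_t-\hat{X}_t|^p}\le C$, already proved (by an ordinary Gr\"onwall argument, the kernel singularity having been absorbed into the constants of Lemma~\ref{lemcalcforlem}-(1),(2)). The paper therefore treats the linear term $\tilde{U}^{n,i}$ directly with this bound and Lemma~\ref{lemcalcforlem}-(3),(4), and for the remaining stochastic-convolution term $\int_0^\cdot K(\cdot-s)\,\mr{d}\tilde{V}^{n,i}_s$ it writes $\mr{d}\tilde{V}^{n,i}_s$ out as $\sum_{j,k}\partial_k\sigma^i_j(\hat{X}_{[ns]/n})\,n^H(\hat{X}^k_s-\hat{X}^k_{[ns]/n})\,\mr{d}W^j_s$ and applies Lemma~\ref{lemcalcforlem}-(4) with Lemma~\ref{lemhatXhd}, which is your quadratic-variation bound in explicit form. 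Your route would also close (the forcing terms are uniformly bounded and Henry's lemma applies), but recognizing that the sup bound is free from Lemma~\ref{lemdiffpeval} removes the only genuinely delicate step you flagged.
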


We will use the uniqueness in law of the solution of (\ref{eqlimitSVE}).
\begin{lemma}\label{lemsolUeval}
  If there is a strong solution for (\ref{eqlimitSVE}), it is in $L_p$, continuous, and unique in law.
\end{lemma}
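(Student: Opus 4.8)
The plan is to regard \eqref{eqlimitSVE} as an inhomogeneous \emph{linear} stochastic Volterra equation in $\U$ and to push every estimate through a singular Gr\"onwall argument adapted to the kernel $K$. Writing the equation schematically as
\begin{equation*}
  U^i_t = F^i_t + \sum_{k=1}^d \int_0^t K(t-s)\, U^k_s\, \partial_k b^i(X_s)\,\mr{d}s + \sum_{k=1}^d\sum_{j=1}^m \int_0^t K(t-s)\, U^k_s\, \partial_k\sigma^i_j(X_s)\,\mr{d}W^j_s,
\end{equation*}
where the coefficients $\partial_k b^i(X_s)$ and $\partial_k\sigma^i_j(X_s)$ are bounded by assumption and $F^i_t$ is the $B$-integral on the second line of \eqref{eqlimitSVE}, which does not involve $\U$. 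First I would prove $\sup_{t\in[0,T]}\EXP{|U_t|^p}<\infty$ for every $p\geq 2$ (hence every $p\geq 1$ by Jensen). Applying the Burkholder--Davis--Gundy inequality to the stochastic integrals and then Jensen's inequality with respect to the finite measures $K(t-s)\,\mr{d}s$ and $K(t-s)^2\,\mr{d}s$ restores the convolution structure, so that $f(t)=\max_i\EXP{|U^i_t|^p}$ satisfies
\begin{equation*}
  f(t) \leq C\,\EXP{|F_t|^p} + C\int_0^t \bigl(K(t-s)+K(t-s)^2\bigr)\, f(s)\,\mr{d}s.
\end{equation*}
Since $K(t)^2 = \Gamma(H+1/2)^{-2}\,t^{2H-1}$ is the more singular factor and $2H-1>-1$, the kernel is integrable; the boundedness of $\EXP{|F_t|^p}$ follows from the same BDG/Jensen step together with the moment bound $\sup_s\EXP{|X_s|^p}<\infty$ for the solution of \eqref{eqSVE} and the linear growth of $\sigma$. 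The singular (Henry) Gr\"onwall lemma, with exponent $2H>0$, then gives $\sup_t f(t)<\infty$.

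Next I would establish continuity by Kolmogorov's criterion. For $0\leq u<t\leq T$ I split each integral as $\int_0^u[K(t-s)-K(u-s)]\,\mr{d}M_s + \int_u^t K(t-s)\,\mr{d}M_s$ and use the standard fractional estimate
\begin{equation*}
  \int_0^u\bigl[K(t-s)-K(u-s)\bigr]^2\,\mr{d}s + \int_u^t K(t-s)^2\,\mr{d}s \leq C\,(t-u)^{2H},
\end{equation*}
together with the $L_p$ bound just obtained, to reach $\EXP{|U_t-U_u|^p}\leq C\,|t-u|^{pH}$. Taking $p$ large yields a continuous modification that is in fact $(H-\epsilon)$-\hd continuous, consistent with $\U\in\mcl{C}^{H-\epsilon}_0$, and $U_0=F_0=0$ gives the vanishing at the origin.

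For uniqueness in law I would argue by pathwise uniqueness. If $U$ and $\tilde U$ are two strong solutions driven by the same $X$, $W$ and $B$, their difference $D=U-\tilde U$ solves the \emph{homogeneous} version of the equation, the inhomogeneous term $F$ cancelling. Repeating the $L_p$ estimate with $F\equiv0$ produces
\begin{equation*}
  \max_i\EXP{|D^i_t|^p} \leq C\int_0^t\bigl(K(t-s)+K(t-s)^2\bigr)\,\max_i\EXP{|D^i_s|^p}\,\mr{d}s,
\end{equation*}
so the singular Gr\"onwall lemma forces $\EXP{|D_t|^p}=0$ for all $t$, and by continuity $D\equiv0$ a.s. Hence any strong solution is pathwise unique, which in particular yields uniqueness in law.

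The hard part is that the singularity of $K$ rules out the classical Gr\"onwall lemma: the crux is to organize the BDG and Jensen steps so that the effective driving kernel is the \emph{most singular} factor $K^2\sim(t-s)^{2H-1}$, and then to check that its exponent remains above $-1$, equivalently $H>0$. This integrability is precisely what makes the singular Gr\"onwall lemma applicable and simultaneously pins down the $L_p$ bound, the $(t-u)^{pH}$ modulus for continuity, and the vanishing of $D$ for uniqueness.
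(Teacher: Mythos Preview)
Your overall strategy is sound and the continuity and uniqueness parts are essentially identical to the paper's (Kolmogorov for continuity; pathwise uniqueness on the difference, then Yamada--Watanabe for uniqueness in law). The main methodological difference is in the $L_p$ estimate: the paper does \emph{not} use a singular Gr\"onwall lemma. Instead it invokes Lemma~\ref{lemcalcforlem}(1),(2), which for $p>2\beta^\ast$ absorbs the kernel entirely via H\"older's inequality (using $K\in L_\beta$ with $\beta\in(2,\tfrac{2}{1-2H})$) and yields
\[
  \EXP{|U_t|^p 1_{\{t<\tau_m\}}}\leq C_1+C_2\int_0^t\EXP{|U_s|^p 1_{\{s<\tau_m\}}}\,\mr{d}s,
\]
so that only the \emph{classical} Gr\"onwall lemma is needed. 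Your route through BDG/Jensen and the Henry-type lemma with kernel $K+K^2\sim (t-s)^{2H-1}$ is a legitimate alternative and is arguably the more standard device in the Volterra literature; the paper's route instead reuses its own preliminary estimate and avoids introducing a new lemma.

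There is, however, one genuine gap you should close. Your Gr\"onwall argument is applied to $f(t)=\max_i\EXP{|U^i_t|^p}$, but nothing in the hypotheses guarantees a priori that $f(t)<\infty$, and the singular Gr\"onwall lemma requires the unknown to be locally bounded (or at least locally integrable); otherwise the inequality is vacuous. The paper handles exactly this point by localizing with $\tau_m=\inf\{t:|U_t|\geq m\}$, proving the bound for $\EXP{|U_t|^p 1_{\{t<\tau_m\}}}$ (which is trivially $\leq m^p$), and then letting $m\to\infty$ via Fatou. Inserting the same stopping-time localization into your argument makes it rigorous; note that this issue does not arise in your uniqueness step, since by that stage both solutions are already known to be in $L_p$.
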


\subsection{Proof of Theorem~\ref{thmlimdistSVE}}
Using the above lemmas, we now prove the theorem.
\begin{proof}[Proof of Theorem \ref{thmlimdistSVE}]
By Lemmas~\ref{lemtightUn} and \ref{lemhldcompact}, $\hat{X} \to X $ in probability 
in the uniform topology. Therefore,
$$(\{ \nabla b^i(\hat{X}\circ \varphi_n)\}_i,
 \{\partial_k\sigma^i_j(\hat{X}\circ \varphi_n)\}_{ijk})
\to
(\{ \nabla b^i(X)\}_i,
 \{\partial_k\sigma^i_j(X)\}_{ijk})
$$ 
in probability 
in the uniform topology as well.
Together with Lemmas~\ref{lemvnlawconv} and \ref{lemtightUn}, we
 conclude that
$$(\U^n,\V^n,\{ \nabla b^i(\hat{X}\circ \varphi_n)\}_i,
 \{\partial_k\sigma^i_j(\hat{X}\circ \varphi_n)\}_{ijk}, Y)$$ 
is tight in $\mathcal{C}^{H-\epsilon}_0 \times \mathcal{C}_0\times
 \mathcal{D}_{d^2} \times \mathcal{D}_{d^2m} \times \mathbb{R}$ for any
 random variable $Y$ on $(\Omega,\msc{F},\mathsf{P})$.
For any subsequence of this tight sequence, there exists a further
 subsequence which converges by Prokhorov's theorem (see e.g.,
 Theorem~5.1 of Billingsley~\cite{Billingsley} for a nonseparable case).
 Lemmas~\ref{lemchar}
 and \ref{lemsolUeval} imply the uniqueness of the
 limit. Therefore the original sequence itself has to
 converge. The limit $\U$ of $\U^n$ 
is characterized by \eqref{eqlimitSVE} again by Lemma~\ref{lemchar}.
The convergence of $\U^n$ is stable because $Y$ is arbitrary.
\end{proof}

\section{Preliminaries}\label{secpre}

The fractional kernel satisfies the following condition.
\begin{equation}\label{eqkernelcondition}
  \begin{gathered}
    K \in L_\beta(0,T) \text{ for some } \beta \in(2,\tfrac{2}{1-2H}),\\
    \int_0^h K(t) \mr{d}t = O(h^{H+1/2}),\quad  \int_0^T (K(t+h) - K(t)) \mr{d}t  = O(h^{H+1/2}),\\
    \left(\int_0^h K(t)^2 \mr{d}t\right)^\frac{1}{2} = O(h^H) \text{ and}   \left(\int_0^T (K(t+h) - K(t))^2 \mr{d}t \right)^\frac{1}{2} = O(h^H).
  \end{gathered}
\end{equation}

We fix here $\beta \in(2,\tfrac{2}{1-2H})$ and denote by $\beta^\ast$ the conjugate index of $\beta/2$, namely, $\beta^\ast = \beta/(\beta-2)$, for the technical purposes.

Before discussing the moments and \hd continuities, we remark the existence and uniqueness for the solutions of (\ref{eqSVE}) and (\ref{eqapproxSVE}).
\begin{remark}
  The existence and uniqueness of the strong continuous solution $X$ for
 (\ref{eqSVE}) are guaranteed by Abi Jaber et al.(Theorem 3.3
 \cite{AVP}). 
The strong solution $\hat{X}$ uniquely exists by \eqref{eqhatXiteration}.
\end{remark}

We introduce the following lemma which is used several times to evaluate integrals with convolution kernel.
\begin{lemma}\label{lemcalcforlem}
  \renewcommand{\labelenumi}{(\arabic{enumi})}
  The following inequalities hold for any adapted $\Real^d$-valued process $Y$ and $\Real^{d\times m}$-valued process $Z$:
  \begin{enumerate}
    \item for $p\geq 2$, $\displaystyle \EXP{\left| \int_0^t K(t-s) Y_s \mr{d}s\right|^p} \leq C \int_0^t \EXP{|Y_s|^p} \mr{d}s$,
    \item for  $p>2\beta^\ast$, $\displaystyle \EXP{\left| \int_0^t K(t-s) Z_s \mr{d}W_s\right|^p} \leq C \int_0^t \EXP{|Z_s|^p} \mr{d}s$,
    \item for $p\geq1$, \\
    $\displaystyle \EXP{\left|\int_0^t (K(t+h-s) - K(t-s)) Y_s \mr{d}s\right|^p}+ \EXP{\left| \int_t^{t+h} K(t+h-s)Y_s \mr{d}s\right|^p} \leq C h^{(H+1/2) p} \sup_{r\in[0,T]} \EXP{|Y_r|^{p}}$,
    \item for $p\geq2$,\\
    $\displaystyle \EXP{\left|\int_0^t (K(t+h-s) - K(t-s)) Z_s \mr{d}W_s\right|^p} + \EXP{\left| \int_t^{t+h} K(t+h-s)Z_s \mr{d}W_s\right|^p} \leq C h^{H p} \sup_{r\in[0,T]} \EXP{|Z_r|^{p}}$,
  \end{enumerate}
  where $C$ depends only on any of $K, \beta, p$, and $T$.
\end{lemma}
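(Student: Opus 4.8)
The plan is to treat the four inequalities in two groups: the ``integral over $[0,t]$'' estimates (1)--(2), where the $L_\beta$-integrability of the kernel must be converted into a clean $\int_0^t \EXP{|Y_s|^p}\,\mr{d}s$ bound, and the ``increment'' estimates (3)--(4), where the decay of the $L_1$- and $L_2$-moduli of continuity of $K$ recorded in \eqref{eqkernelcondition} supplies the powers $h^{(H+1/2)p}$ and $h^{Hp}$. Throughout, Fubini is legitimate because every integrand is nonnegative.

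For (1) I would apply \hds inequality pathwise to the pairing of $K(t-\cdot)\in L_\beta(0,t)$ against $Y\in L_{\beta'}(0,t)$, where $\beta'=\beta/(\beta-1)\in(1,2)$ is the conjugate of $\beta$; this bounds the inner integral by $\|K\|_{L_\beta}\,(\int_0^t|Y_s|^{\beta'}\,\mr{d}s)^{1/\beta'}$. Since $p\ge 2>\beta'$, the power--mean inequality upgrades the $L_{\beta'}$-norm in time to the $L_p$-norm at the cost of a factor $T^{p/\beta'-1}$, after which Fubini and the finiteness of $\|K\|_{L_\beta}$ give the claim. For (2) I would first invoke the Burkholder--Davis--Gundy inequality to replace the stochastic integral by $\EXP{(\int_0^t K(t-s)^2|Z_s|^2\,\mr{d}s)^{p/2}}$, and then run the same argument at the level of squares: \hds inequality for the pair $K^2\in L_{\beta/2}$ and $|Z|^2\in L_{\beta^\ast}$ (recall $\beta^\ast=\beta/(\beta-2)$ is the conjugate of $\beta/2$) yields $\int_0^t K^2|Z|^2 \le \|K\|_{L_\beta}^2(\int_0^t|Z_s|^{2\beta^\ast}\,\mr{d}s)^{1/\beta^\ast}$, and the hypothesis $p>2\beta^\ast$ is exactly what makes the exponent $p/(2\beta^\ast)$ exceed one, so the power--mean inequality again converts the time integral of $|Z|^{2\beta^\ast}$ into one of $|Z|^p$.

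For (3) and (4) I would bound each of the two summands separately by Minkowski's integral inequality, pulling the $L_p(\mathsf{P})$-norm inside the time integral. In (3), for $p\ge1$ this gives $\|\int_0^t(K(t+h-s)-K(t-s))Y_s\,\mr{d}s\|_{L_p}\le \sup_r\|Y_r\|_{L_p}\int_0^t|K(u+h)-K(u)|\,\mr{d}u$ after the substitution $u=t-s$, while the second summand produces $\int_0^h K(u)\,\mr{d}u$; both kernel integrals are $O(h^{H+1/2})$ by the first line of \eqref{eqkernelcondition} (using that $K$ is non-increasing to drop the absolute value), and raising to the $p$-th power yields $h^{(H+1/2)p}$. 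In (4), for $p\ge2$ I would apply Burkholder--Davis--Gundy and then Minkowski's integral inequality in $L_{p/2}(\mathsf{P})$ to the quadratic-variation term, reducing matters to $\int_0^t(K(u+h)-K(u))^2\,\mr{d}u$ and $\int_0^h K(u)^2\,\mr{d}u$; these are $O(h^{2H})$ by the last line of \eqref{eqkernelcondition}, and the extra exponent $p/2$ coming from BDG and Minkowski turns this into $h^{Hp}$.

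The only genuinely delicate point is the exponent bookkeeping that guarantees the kernel singularity is absorbed. Concretely, the constraint $\beta\in(2,2/(1-2H))$ is precisely what places $K$ in $L_\beta$ and $K^2$ in $L_{\beta/2}$ with $\beta/2>1$, while the thresholds $p\ge2$ in (1) and $p>2\beta^\ast$ in (2) are exactly the conditions under which the power--mean step is valid; keeping these inequalities consistent across the \hds and BDG applications is where I expect the main care to be needed, the remaining computations being routine.
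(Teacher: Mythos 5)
Your proposal is correct and follows essentially the same route as the paper: Minkowski/H\"older plus the kernel integrability for (1)--(3), and BDG followed by H\"older or Minkowski in $L_{p/2}$ together with the moduli in \eqref{eqkernelcondition} for (2) and (4), with the exponent bookkeeping ($p\geq 2$, $p>2\beta^\ast$) playing exactly the role you identify. The only cosmetic difference is in (1), where the paper pairs $K$ with itself via Cauchy--Schwarz in $L_2$ (valid since $K\in L_2$ for $H>0$) rather than pairing $K\in L_\beta$ against $Y\in L_{\beta'}$; both yield the same bound.
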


\begin{proof}[Proof of Lemma \ref{lemcalcforlem}]
  Let us show (1),(3) first. Take $p\geq 2$. Minkowski's integral inequality, the Cauchy-Schwarz inequality and \hds inequality show that
  \begin{align*}
    \EXP{\left| \int_0^t K(t-s) Y_s \mr{d}s\right|^p} &\leq \left(\int_0^t |K(t-s)|\EXP{|Y_s|^p}^\frac{1}{p}\mr{d}s\right)^p\\
    &\leq \left(\int_0^t |K(s)|^2\mr{d}s\right)^\frac{p}{2}\left(\int_0^t \EXP{|Y_s|^p}^\frac{2}{p}\mr{d}s\right)^\frac{p}{2}\\
    &\leq C \int_0^t \EXP{|Y_s|^p} \mr{d}s.
  \end{align*}
  For (3), we observe
  \begin{align*}
    &\EXP{\left|\int_0^t (K(t+h-s) - K(t-s)) Y_s\mr{d}s\right|^p} + \EXP{\left| \int_t^{t+h} K(t+h-s)Y_s \mr{d}s\right|^p} \\
    &\leq \left(\int_0^t |K(t+h-s) - K(t-s)| \EXP{|Y_s|^p}^\frac{1}{p} \mr{d}s\right)^p + \left( \int_t^{t+h}|K(t+h-s)| \EXP{|Y_s|^p}^\frac{1}{p} \mr{d}s\right)^p \\
    &\leq  \sup_{r\in[0,T]}\EXP{|Y_r|^{p}}\left(  \left(\int_0^t (K(s+h) - K(s)) \mr{d}s\right)^p + \left( \int_0^{h} K(s) \mr{d}s\right)^p\right) \\
    &\leq C h^{(H+1/2) p} \sup_{r\in[0,T]} \EXP{|Y_r|^{p}}
  \end{align*}
  by Minkowski's integral inequality and (\ref{eqkernelcondition}).
  We next show (2),(4). For (2), let $p>2\beta^\ast$. Then, using the Burkholder-Davis-Gundy (BDG for short) inequality, \hds inequality and Fubini's theorem properly yields that
  \begin{align*}
    \EXP{\left| \int_0^t K(t-s) Z_s \mr{d}W_s\right|^p}&\leq C_p \EXP{\left(\int_0^t |K(t-s)|^2 |Z_s|^2 \mr{d}s\right)^\frac{p}{2}}\\
    &\leq  \left(\int_0^t K(t-s)^{2\cdot \frac{\beta}{2}} \mr{d}s\right)^{\frac{2}{\beta}\cdot \frac{p}{2}} \EXP{\left(\int_0^t |Z_s|^{2\beta^\ast} \mr{d}s\right)^{\frac{1}{\beta^\ast}\cdot\frac{p}{2}}}\\
    &\leq \left(\int_0^t K(s)^\beta\mr{d}s \right)^{\frac{p}{\beta}}\EXP{t^{\frac{p}{2\beta^\ast}-1} \int_0^t|Z_s|^p \mr{d}s}\\
    &\leq C \int_0^t \EXP{|Z_s|^p} \mr{d}s.
  \end{align*}
  For (4), take $p\geq 2$. By the BDG inequality and Minkowski's inequality, we obtain
  \begin{align*}
    &\EXP{\left|\int_0^t (K(t+h-s) - K(t-s)) Z_s\mr{d}W_s\right|^p}+ \EXP{\left| \int_t^{t+h} K(t+h-s)Z_s \mr{d}W_s\right|^p} \\
    &\leq C_p\EXP{\left(\int_0^t |K(t+h-s) - K(t-s)|^2 |Y_s|^2\mr{d}s\right)^\frac{p}{2}}+ C_p\EXP{\left( \int_t^{t+h} |K(t+h-s)|^2 |Y_s|^2 \mr{d}s\right)^\frac{p}{2}}\\
    &\leq C_p \left(\int_0^t (K(t+h-s) - K(t-s))^2 \EXP{|Y_s|^{2\cdot \frac{p}{2}} }^\frac{2}{p} \mr{d}s\right)^\frac{p}{2} + C_p \left(\int_t^{t+h} K(t+h-s)^2 \EXP{|Y_s|^{2\cdot \frac{p}{2}} }^\frac{2}{p} \mr{d}s\right)^\frac{p}{2}\\
    &\leq C_p \left(\left(\int_0^t (K(s+h) - K(s))^2\mr{d}s\right)^\frac{p}{2} +  \left(\int_0^h K(u)^2\mr{d}u\right)^\frac{p}{2}\right) \sup_{r\in[0,T]} \EXP{|Y_r|^{p}} \\
    &\leq C h^{H p} \sup_{r\in[0,T]} \EXP{|Y_r|^{p}}.
  \end{align*}
  This completes the proof.
\end{proof}

The $p$th moment and the \hd continuity of the solution of the standard
SVE, have already been studied. The following two results are
corollaries of Abi Jaber et al. (Lemmas 3.1 and 2.4 \cite{AVP}).
\begin{lemma}\label{lemXpeval}
  Let $p\geq1$, Then,
  \begin{equation*}
    \sup_{t \in [0,T]} \EXP{|X_t|^p} \leq C,
  \end{equation*}
  where $C$ is a constant that only depends on $|X_0|, |b(0)|, |\sigma(0)|, K, p$ and $T$.
\end{lemma}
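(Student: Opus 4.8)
The plan is to combine the linear growth of the coefficients with the convolution moment bounds of Lemma~\ref{lemcalcforlem} to obtain a linear integral inequality for $t\mapsto\EXP{|X_t|^p}$, and then close the argument with Gronwall's lemma. Since the derivatives of $b$ and $\sigma$ are bounded, both coefficients grow at most linearly: $|b(x)|\leq|b(0)|+C|x|$ and $|\sigma(x)|\leq|\sigma(0)|+C|x|$. I would fix first an exponent $p>2\beta^\ast$, so that both parts~(1) and~(2) of Lemma~\ref{lemcalcforlem} are available, and apply them to the drift and diffusion terms of~\eqref{eqSVE} with $Y_s=b(X_s)$ and $Z_s=\sigma(X_s)$. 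Using the elementary inequality $|a+b+c|^p\leq 3^{p-1}(|a|^p+|b|^p+|c|^p)$ together with the linear growth, this yields
\begin{equation*}
  \EXP{|X_t|^p} \leq C\Bigl(1 + \int_0^t \EXP{|X_s|^p}\,\mr{d}s\Bigr), \quad t\in[0,T],
\end{equation*}
where $C$ depends only on $|X_0|,|b(0)|,|\sigma(0)|$, the bounds on the derivatives, and $K,\beta,p,T$. Note that the singularity of $K$ is entirely absorbed into $C$ through the finite quantities $\int_0^T K(s)^2\,\mr{d}s$ and $\int_0^T K(s)^\beta\,\mr{d}s$ appearing in Lemma~\ref{lemcalcforlem}, so the resulting inequality is of standard (non-fractional) Gronwall type.

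The main obstacle is that Gronwall's lemma cannot be invoked until one knows a priori that $t\mapsto\EXP{|X_t|^p}$ is finite and locally bounded; otherwise the inequality above is vacuous. The usual localization by a stopping time $\tau_N=\inf\{t:|X_t|\geq N\}$ is delicate here, because the convolution kernel $K(t-s)$ does not commute with stopping and $X_{t\wedge\tau_N}$ does not solve a conveniently stopped Volterra equation. To circumvent this, I would instead run the estimate on the Picard iterates $X^{(0)}\equiv X_0$, $X^{(\ell+1)}_t = X_0+\int_0^t K(t-s)b(X^{(\ell)}_s)\,\mr{d}s+\int_0^t K(t-s)\sigma(X^{(\ell)}_s)\,\mr{d}W_s$, whose $p$th moments are finite and locally bounded by induction. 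The same computation gives $\phi_{\ell+1}(t)\leq C(1+\int_0^t\phi_\ell(s)\,\mr{d}s)$ with $\phi_\ell(t):=\sup_{s\leq t}\EXP{|X^{(\ell)}_s|^p}$; iterating this bound dominates $\phi_\ell(t)$ by $A\sum_{k\geq 0}(Ct)^k/k!\leq Ae^{CT}$ uniformly in $\ell$, and Fatou's lemma transfers the bound to $X$, which coincides with the Picard limit by the uniqueness quoted from Abi~Jaber et al. Gronwall's lemma then yields $\sup_{t\in[0,T]}\EXP{|X_t|^p}\leq C$ for every $p>2\beta^\ast$.

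Finally, to cover the remaining range $1\leq p\leq 2\beta^\ast$, I would invoke Jensen's inequality: choosing any $q>2\beta^\ast$ gives $\EXP{|X_t|^p}\leq\EXP{|X_t|^q}^{p/q}$, so the bound already established for the exponent $q$ yields the claim for all $p\geq1$. I expect the finiteness/localization step to be the only genuinely subtle point, the rest being routine once Lemma~\ref{lemcalcforlem} is in hand.
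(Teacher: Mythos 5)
Your argument is correct in substance, but note that the paper does not actually prove this lemma: it is quoted directly as a corollary of Lemmas 3.1 and 2.4 of Abi Jaber et al.~\cite{AVP}. The closest in-paper analogue is the proof of Lemma~\ref{lemhatXpeval} for the discretized process, where the authors resolve exactly the finiteness issue you identify by a stopping-time localization rather than by Picard iteration: they bound $|\hat{X}_t|^p 1_{\{t<\tau_m\}}$ by the $p$th power of the right-hand side of the equation with coefficients evaluated at $\hat{X}_{\frac{[ns]}{n}}1_{\{s<\tau_m\}}$, using the local property of the stochastic integral, so that the truncated moment function is finite by construction, ordinary Gronwall applies, and Fatou removes the truncation. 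Your worry that $X_{t\wedge\tau_N}$ does not solve a stopped Volterra equation is legitimate, but this particular device sidesteps it and would work verbatim for $X$ once continuity of $X$ (hence $\tau_m\uparrow T$ a.s.) is taken from the existence theorem. Your Picard route buys a fully self-contained proof at the price of one step you assert rather than establish: that the Picard limit coincides with $X$. Uniqueness alone does not give this --- you must first show that the iterates converge and that their limit is itself a solution. Under the standing global Lipschitz hypothesis this is routine, since Lemma~\ref{lemcalcforlem}-(1),(2) gives $\sup_{t\in[0,T]}\EXP{|X^{(\ell+1)}_t-X^{(\ell)}_t|^p}\leq C^{\ell}T^{\ell}(\ell!)^{-1}\sup_{t\in[0,T]}\EXP{|X^{(1)}_t-X^{(0)}_t|^p}$, hence convergence in $L_p$ uniformly in $t$, after which one passes to the limit term by term in the equation; but the step should be stated. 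Two small points besides: the final appeal to Gronwall at the end of your second paragraph is redundant, as the uniform bound $Ae^{CT}$ is already delivered by the iterated inequality together with Fatou; and the constant in the lemma is claimed to depend only on $|X_0|,|b(0)|,|\sigma(0)|,K,p,T$, so you should record that the dependence on the derivative bounds of $b$ and $\sigma$ is subsumed there (as the paper implicitly does for Lemma~\ref{lemhatXpeval}).
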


\begin{lemma}\label{lemXhd}
  Let $p > H^{-1}$. Then
  \begin{equation*}
    \EXP{|X_t-X_s|^p} \leq C|t-s|^{H p} ,\quad t,s \in [0,T]
  \end{equation*}
  and $X$ admits a version which is \hd continuous on $[0,T]$ of any order $\alpha<H-p^{-1}$. Denoting this version again by $X$, one has
  \begin{equation*}
    \EXP{\left(\sup_{0\leq s \leq t \leq T} \frac{|X_t - X_s| }{|t-s|^\alpha}\right)^p} \leq C_\alpha
  \end{equation*}
  for all $\alpha \in [0,H-p^{-1})$, where $C_\alpha$ is a constant. As a
 consequence, we can regard $X$ as a $\mcl{C}^{\alpha}$ valued random
 variable for any $\alpha<H$.
\end{lemma}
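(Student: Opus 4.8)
The plan is to obtain the increment moment bound directly from \eqref{eqSVE} via Lemma~\ref{lemcalcforlem}, and then to upgrade it to the pathwise \hd estimate through the Garsia--Rodemich--Rumsey (GRR) inequality. Fix $p>H^{-1}$. Since $H\le 1/2$ we have $H^{-1}\ge 2$, so $p>2$ and parts (3) and (4) of Lemma~\ref{lemcalcforlem} both apply. As $b$ and $\sigma$ have bounded derivatives they are of linear growth, whence Lemma~\ref{lemXpeval} gives $\sup_{r\in[0,T]}\EXP{|b(X_r)|^p}+\sup_{r\in[0,T]}\EXP{|\sigma(X_r)|^p}\le C$. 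For $0\le s<t\le T$ with $h=t-s$, I would decompose
\[
  X_t-X_s=\int_0^s\bigl(K(t-r)-K(s-r)\bigr)b(X_r)\,\mr{d}r+\int_s^t K(t-r)b(X_r)\,\mr{d}r,
\]
plus the two analogous terms obtained by replacing $b(X_r)\,\mr{d}r$ with $\sigma(X_r)\,\mr{d}W_r$. Applying Lemma~\ref{lemcalcforlem}(3) to the two drift terms bounds their $p$th moment by $Ch^{(H+1/2)p}$, while Lemma~\ref{lemcalcforlem}(4) bounds the stochastic contribution by $Ch^{Hp}$. Since $h\le T$ and $(H+1/2)p>Hp$, the drift is dominated and we obtain $\EXP{|X_t-X_s|^p}\le C|t-s|^{Hp}$, the first assertion.

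For the \hd version, observe that $p>H^{-1}$ means $Hp>1$, so the increment bound already yields a continuous version by Kolmogorov--Chentsov. To get the quantitative seminorm bound I would apply GRR with $\Psi(x)=x^p$ and nondecreasing control $\psi(u)=u^{\beta}$ (renamed to avoid clashing with the exponent $p$). The random variable
\[
  B=\int_0^T\!\!\int_0^T\frac{|X_u-X_v|^p}{|u-v|^{\beta p}}\,\mr{d}u\,\mr{d}v
\]
satisfies $\EXP{B}\le C\int_0^T\!\int_0^T|u-v|^{(H-\beta)p}\,\mr{d}u\,\mr{d}v<\infty$ whenever $\beta<H+p^{-1}$, and GRR then gives $|X_t-X_s|\le CB^{1/p}|t-s|^{\beta-2/p}$ on the continuous version. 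Writing $\alpha=\beta-2/p$, which sweeps $[0,H-p^{-1})$ as $\beta$ ranges over $[2/p,H+p^{-1})$, this produces an $\alpha$-\hd continuous version together with the estimate
\[
  \EXP{\Bigl(\sup_{0\le s\le t\le T}\frac{|X_t-X_s|}{|t-s|^\alpha}\Bigr)^p}\le C^p\,\EXP{B}\le C_\alpha .
\]

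For the final consequence, given any $\alpha<H$ I would pick $p>(H-\alpha)^{-1}$; since $(H-\alpha)^{-1}\ge H^{-1}$ this $p$ meets the standing hypothesis, and then $\alpha<H-p^{-1}$, so the displayed moment bound forces the \hd seminorm to be a.s.\ finite and $X$ to be $\mcl{C}^{\alpha}$-valued. I expect the only genuinely delicate step to be the quantitative GRR estimate: deriving an $L_p$ bound on the \hd seminorm (not merely its a.s.\ finiteness) requires carefully tracking the intrinsic loss of $p^{-1}$ in the exponent, so that $\EXP{B}<\infty$ holds exactly for $\alpha<H-p^{-1}$. By contrast, the increment bound is routine once Lemma~\ref{lemcalcforlem} and Lemma~\ref{lemXpeval} are available.
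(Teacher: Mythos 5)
Your proof is correct, but it is worth pointing out that the paper does not actually prove this lemma: it imports it wholesale as a corollary of Lemmas~2.4 and~3.1 of Abi~Jaber et al.~\cite{AVP} (the same citation the paper uses again when proving the analogous statement for $\hat X$ in Lemma~\ref{lemhatXhd}, and the GRR step is the one the paper defers to Richard et al.\ in Lemma~\ref{lemsupdiffpeval}). What you have done is reconstruct that external argument from the paper's own toolbox: the increment decomposition into the two drift terms and two stochastic terms, estimated by Lemma~\ref{lemcalcforlem}-(3),(4) together with the moment bound of Lemma~\ref{lemXpeval} and the linear growth of $b,\sigma$, gives $\EXP{|X_t-X_s|^p}\le C|t-s|^{Hp}$ exactly as claimed (the drift contributes the better exponent $(H+1/2)p$ and is absorbed since $h\le T$), and the GRR bookkeeping $\Psi(x)=x^p$, $\psi(u)=u^{\beta}$, $\alpha=\beta-2/p$, $\EXP{B}<\infty$ for $\beta<H+p^{-1}$ is the standard quantitative route to the $L_p$ bound on the \hd seminorm. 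Your self-contained version is arguably preferable for the reader since it reuses Lemma~\ref{lemcalcforlem} rather than a black-box citation. Two cosmetic caveats: at the endpoint $\beta=2/p$ (i.e.\ $\alpha=0$) the GRR integral $\int_0^{h}u^{\beta-1-2/p}\,\mr{d}u$ diverges, so the $\alpha=0$ case of the displayed estimate should instead be deduced from any positive $\alpha_0<H-p^{-1}$ via $\sup_{s<t}|X_t-X_s|\le T^{\alpha_0}\sup_{s<t}|X_t-X_s|/|t-s|^{\alpha_0}$; and one should remark that the continuous versions produced for different $p$ in the final step are indistinguishable, so "denoting this version again by $X$" is unambiguous. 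Neither affects the substance.
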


The following two lemmas are analogues of the above lemmas.

\begin{lemma}\label{lemhatXpeval}
  Let $p\geq1$, Then,
  \begin{equation*}
    \sup_{t \in [0,T]} \EXP{|\hat{X}_t|^p} \leq C,
  \end{equation*}
  where $C$ is a constant that only depends on $|X_0|, |b(0)|, |\sigma(0)|, K, p, \beta$, and $T$.
\end{lemma}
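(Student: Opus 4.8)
The plan is to mirror the moment estimate for the true solution $X$ (Lemma~\ref{lemXpeval}), but with the coefficients evaluated at the frozen value $\hat{X}_{\frac{[ns]}{n}}$, while keeping every constant independent of $n$. First I would reduce to exponents $p>2\beta^\ast$: since $\beta>2$ forces $2\beta^\ast>2$, proving the bound for all such $p$ and then applying Lyapunov's inequality $\EXP{|\hat{X}_t|^p}^{1/p}\leq\EXP{|\hat{X}_t|^q}^{1/q}$ for $p\leq q$ delivers the case $p\in[1,2\beta^\ast]$ as well. For $p>2\beta^\ast$ both estimates (1) and (2) of Lemma~\ref{lemcalcforlem} are available (the drift needs only $p\geq2$, the stochastic integral needs $p>2\beta^\ast$). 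Since $b$ and $\sigma$ have bounded derivatives they are of linear growth, $|b(x)|\leq|b(0)|+C|x|$ and $|\sigma(x)|\leq|\sigma(0)|+C|x|$, so applying Lemma~\ref{lemcalcforlem}(1) to the drift integral and Lemma~\ref{lemcalcforlem}(2) to the stochastic integral in \eqref{eqapproxSVE} gives
\begin{equation*}
  \EXP{|\hat{X}_t|^p} \leq C\Bigl(1 + \int_0^t \EXP{\bigl|\hat{X}_{\frac{[ns]}{n}}\bigr|^p}\,\mr{d}s\Bigr),\qquad t\in[0,T],
\end{equation*}
where $C$ depends only on $|X_0|,|b(0)|,|\sigma(0)|,K,\beta,p,T$ and, crucially, \emph{not} on $n$.

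The second step is to close this into a discrete Grönwall inequality along the grid. Writing $a_j=\EXP{|\hat{X}_{j/n}|^p}$ and evaluating the display at $t=j/n$, the piecewise-constant integrand collapses the integral into a Riemann sum, so $a_j\leq C\bigl(1+\tfrac1n\sum_{i=0}^{j-1}a_i\bigr)$. Before invoking Grönwall I must verify that each $a_j$ is finite, which I would do by induction along the grid: $a_0=|X_0|^p<\infty$, and if $a_0,\dots,a_{j-1}<\infty$ then the integrand $\sigma(\hat{X}_{\frac{[ns]}{n}})$ on $[0,j/n]$ involves only the frozen values $\hat{X}_{i/n}$ with $i\leq j-1$, which have finite $p$th moment by linear growth; hence the stochastic integral is well defined and the inequality itself forces $a_j<\infty$. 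The discrete Grönwall lemma then yields $a_j\leq C\bigl(1+\tfrac{C}{n}\bigr)^{j}\leq Ce^{Cj/n}\leq Ce^{CT}$, a bound uniform in $j$ and in $n$.

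Finally I would transfer the grid bound to an arbitrary $t\in[0,T]$. Since $\hat{X}_{\frac{[ns]}{n}}=\hat{X}_{i/n}$ for $s\in[i/n,(i+1)/n)$, the integral inequality gives $\EXP{|\hat{X}_t|^p}\leq C\bigl(1+T\max_{i\leq[nT]}a_i\bigr)\leq C$, which establishes the estimate for $p>2\beta^\ast$; the Lyapunov reduction then covers all $p\geq1$.

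I expect the main obstacle to be careful bookkeeping rather than any deep difficulty: one must confirm that every constant produced by Lemma~\ref{lemcalcforlem} depends only on $K,\beta,p,T$, so that the Grönwall constant is genuinely free of $n$, and one must treat the a priori finiteness of the moments with some care, because the stochastic integral in \eqref{eqapproxSVE} is meaningful only once the frozen coefficients are known to be square-integrable. The induction along the grid is precisely what resolves both points at once.
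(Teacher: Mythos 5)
Your proposal is correct, and its core estimate is exactly the paper's: reduce to $p>2\beta^\ast$, use the linear growth of $b$ and $\sigma$ together with Lemma~\ref{lemcalcforlem}-(1),(2) to obtain a Gronwall-type integral inequality for $\EXP{|\hat{X}_t|^p}$, with all constants depending only on $|X_0|,|b(0)|,|\sigma(0)|,K,\beta,p,T$. Where you genuinely diverge is in how the Gronwall step is made legitimate. The paper localizes with the stopping times $\tau_m=\inf\{t:|\hat X_t|\geq m\}\wedge T$, notes that $f_m(t)=\sup_{r\leq t}\EXP{|\hat X_r|^p 1_{\{r<\tau_m\}}}$ is bounded by $m^p$ so the continuous Gronwall lemma applies, and then removes the truncation by Fatou's lemma. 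You instead exploit the explicit inductive construction \eqref{eqhatXiteration}: finiteness of the grid moments $a_j=\EXP{|\hat X_{j/n}|^p}$ follows by induction since $\hat X_{j/n}$ is a finite combination of coefficients frozen at earlier grid points multiplied by Gaussian integrals, and a discrete Gronwall inequality then gives $a_j\leq Ce^{CT}$ uniformly in $j$ and $n$, after which the bound at arbitrary $t$ follows from one more application of the integral inequality. Both routes are sound. Yours buys a more concrete, scheme-specific argument that avoids stopping times entirely and makes transparent why the bound is uniform in $n$ (the discrete Gronwall constant $(1+C/n)^{[nT]}$ is visibly bounded); the paper's localization is more robust boilerplate that would survive if $\hat X$ were not given by an explicit finite recursion. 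One small point to keep in mind if you write this up: when you evaluate the integral inequality at $t=j/n$ and at general $t$, the right-hand side involves $\EXP{|\hat X_{[ns]/n}|^p}$ rather than $\EXP{|\hat X_s|^p}$, which is precisely what lets the discrete Gronwall close without any continuity-in-$t$ argument — you have noted this, and it is the feature that makes your variant work.
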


\begin{proof}
  We will prove only in the case $p> 2\beta^\ast$, which is sufficient for $p\geq1$. Let $\tau_m = \inf \left\{t\geq 0 \,\middle|\, |\hat{X}_t| \geq m \right\} \wedge T$ and observe that
  \begin{equation}\label{eqlocalbehavior}
    |\hat{X}_t|^p 1_{\{t<\tau_m\}} \leq \left| X_0 + \int_0^t K(t-s) b(\hat{X}_{\frac{[ns]}{n}} 1_{\{s < \tau_m\}}) \mr{d}s+ \int_0^t K(t-s) \sigma(\hat{X}_{\frac{[ns]}{n}} 1_{\{s < \tau_m\}}) \mr{d}W_s \right|^p.
  \end{equation}
  Indeed, for $t \geq \tau_m$ the left-hand side is zero while the right-hand side is nonnegative. For $t<\tau_m$ the local property of the stochastic integral implies that the right-hand side is equal to
  \begin{equation*}
    \left|X_0 + \int_0^t K(t-s) b(\hat{X}_{\frac{[ns]}{n}} ) \mr{d}s + \int_0^t K(t-s) \sigma(\hat{X}_{\frac{[ns]}{n}} ) \mr{d}W_s \right|^p.
  \end{equation*}
  Then by (\ref{eqlocalbehavior}), we have
  \begin{align*}
    \EXP{\left|\hat{X}_t\right|^p1_{\{t<\tau_m\}}} \leq 3^{p-1}\left|X_0\right|^p+ 3^{p-1}&\EXP{\left| \int_0^t K(t-s) b(\hat{X}_{\frac{[ns]}{n}} 1_{\{s < \tau_m\}}) \mr{d}s\right|^p}\\
    + &3^{p-1} \EXP{\left| \int_0^t K(t-s) \sigma(\hat{X}_{\frac{[ns]}{n}} 1_{\{s < \tau_m\}}) \mr{d}W_s\right|^p}.
  \end{align*}
  Therefore, by Lemma \ref{lemcalcforlem}-(1),(2) and the Lipschitz condition of $\sigma$, we see
  \begin{align*}
    \EXP{\left|\hat{X}_t\right|^p1_{\{t<\tau_m\}}} &\leq  C_1 + C_2 \int_0^t \EXP{\left( |b(\hat{X}_{\frac{[ns]}{n}})|^p + |\sigma(\hat{X}_{\frac{[ns]}{n}})|^p \right)1_{\{s < \tau_m\}}} \mr{d}s\\
    &\leq C_1 + C_2\int_0^t \EXP{ |\hat{X}_{\frac{[ns]}{n}}|^p 1_{\{\frac{[ns]}{n} < \tau_m\}}}\mr{d}s\\
    &\leq C_1 + C_2\int_0^t \sup_{r\in[0,s]}\EXP{ |\hat{X}_r|^p 1_{\{r < \tau_m\}}}\mr{d}s,
  \end{align*}
  where $C_1,C_2 \geq 0$ are some constants independent of $n$ and $m$ (remark that $\{s < \tau_m\} \subset \{\frac{[ns]}{n} < \tau_m\}$ for all $s\in (0,T)$). Putting $f_m(t) = \sup_{r\in[0,t]}\EXP{ |\hat{X}_r|^p 1_{\{r < \tau_m\}}}$, we see
  \begin{equation*}
    f_m(t) \leq C_1 + C_2 \sup_{r\in[0,t]} \int_0^r \sup_{u\in[0,s]}\EXP{ |\hat{X}_u|^p 1_{\{u < \tau_m\}}}\mr{d}s = C_1 + C_2 \int_0^t f_m(s) \mr{d}s .
  \end{equation*}
  Since $f_m(t)$ is bounded and, therefore, integrable on $[0,T]$, we can apply Gronwall's lemma to obtain
  \begin{equation*}
    f_m(t) \leq C_1e^{C_2 t} \leq C_1e^{C_2T}.
  \end{equation*}
  By Fatou's lemma, we have
  \begin{equation*}
    \EXP{|\hat{X}_t|^p} = \EXP{\liminf_{m\to\infty} |\hat{X}_t|^p 1_{\{t < \tau_m\}}} \leq \liminf_{m\to\infty} \EXP{|\hat{X}_t|^p 1_{\{t < \tau_m\}}} \leq \liminf_{m\to\infty} f_m(t) \leq C_1e^{C_2 T}
  \end{equation*}
  for all $t\in[0,T]$, which completes the proof.
\end{proof}

\begin{lemma}\label{lemhatXhd}
  Let $p > H^{-1}$. Then
  \begin{equation*}
    \EXP{|\hat{X}_t-\hat{X}_s|^p} \leq C|t-s|^{H p} ,\quad t,s \in [0,T]
  \end{equation*}
  and $\hat{X}$ admits a version which is \hd continuous on $[0,T]$ of any order $\alpha<H-p^{-1}$. Denoting this version again by $\hat{X}$, one has
  \begin{equation*}
    \EXP{\left(\sup_{0\leq s \leq t \leq T} \frac{|\hat{X}_t - \hat{X}_s| }{|t-s|^\alpha}\right)^p} \leq C_\alpha
  \end{equation*}
  for all $\alpha \in [0,H-p^{-1})$, where $C_\alpha$ is a constant that
 does not depend on $n$. As a consequence, we can regard $\hat{X}$ as 
a $\mcl{C}^\alpha$ valued random variable for any order $\alpha<H$ for all $n$.
\end{lemma}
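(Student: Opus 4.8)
The plan is to mirror the proof of Lemma~\ref{lemXhd}, but with careful bookkeeping to ensure that every constant is independent of $n$. This uniformity is the entire purpose of stating the lemma separately, and it is available precisely because the constants in both Lemma~\ref{lemcalcforlem} and Lemma~\ref{lemhatXpeval} depend only on $K,\beta,p,T$ and not on $n$. As in Lemma~\ref{lemXhd}, the proof has two stages: first the moment bound $\EXP{|\hat{X}_t-\hat{X}_s|^p}\leq C|t-s|^{Hp}$, and then the passage to a \hd continuous version via a quantitative Kolmogorov continuity theorem.

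For the moment bound I would write $t=s+h$ with $h>0$ and subtract the two copies of \eqref{eqapproxSVE}. The drift contribution splits as
\[
  \int_0^s \bigl(K(s+h-u)-K(s-u)\bigr) b(\hat{X}_{\frac{[nu]}{n}})\,\mr{d}u + \int_s^{s+h} K(s+h-u) b(\hat{X}_{\frac{[nu]}{n}})\,\mr{d}u,
\]
which is exactly the form controlled by Lemma~\ref{lemcalcforlem}-(3) with $Y_u=b(\hat{X}_{\frac{[nu]}{n}})$, while the diffusion contribution splits analogously into two stochastic integrals controlled by Lemma~\ref{lemcalcforlem}-(4) with $Z_u=\sigma(\hat{X}_{\frac{[nu]}{n}})$. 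Both integrands are adapted, and since $H\in(0,1/2]$ gives $p>H^{-1}\geq 2$, the hypotheses $p\geq1$ and $p\geq2$ of those two parts are met. Because the derivatives of $b$ and $\sigma$ are bounded, $b$ and $\sigma$ have linear growth, so Lemma~\ref{lemhatXpeval} yields $\sup_r\EXP{|b(\hat{X}_{\frac{[nr]}{n}})|^p}+\sup_r\EXP{|\sigma(\hat{X}_{\frac{[nr]}{n}})|^p}\leq C$ uniformly in $n$. Collecting the two estimates gives $\EXP{|\hat{X}_t-\hat{X}_s|^p}\leq C\bigl(h^{(H+1/2)p}+h^{Hp}\bigr)$, and since $h\leq T$ implies $h^{(H+1/2)p}\leq T^{p/2}h^{Hp}$, the desired bound with $C$ independent of $n$ follows.

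With the moment estimate in hand, I would invoke the quantitative Kolmogorov--Chentsov continuity theorem, in the version that also bounds the $p$th moment of the \hd seminorm. Rewriting $Hp=1+(H-p^{-1})p$, the bound $\EXP{|\hat{X}_t-\hat{X}_s|^p}\leq C|t-s|^{Hp}$ produces a version of $\hat{X}$ that is $\alpha$-\hd continuous for every $\alpha<H-p^{-1}$, together with
\[
  \EXP{\left(\sup_{0\leq s\leq t\leq T}\frac{|\hat{X}_t-\hat{X}_s|}{|t-s|^\alpha}\right)^p}\leq C_\alpha .
\]
The Kolmogorov constant $C_\alpha$ depends only on the moment constant $C$ above and on $p,\alpha,T$, hence is again independent of $n$. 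Finally, for any target order $\alpha<H$ one chooses $p>(H-\alpha)^{-1}$, which shows $\hat{X}$ takes values in $\mcl{C}^\alpha$ for every $\alpha<H$, uniformly over $n$.

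There is no deep difficulty here; the single point requiring genuine care is the uniformity in $n$, which is not automatic. I would emphasize that it hinges entirely on having set up Lemma~\ref{lemcalcforlem} and Lemma~\ref{lemhatXpeval} with $n$-free constants, and on the fact that the discretized integrands $b(\hat{X}_{\frac{[nu]}{n}})$ and $\sigma(\hat{X}_{\frac{[nu]}{n}})$ remain adapted, so that those lemmas apply verbatim without any modification.
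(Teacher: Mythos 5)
Your proposal is correct and follows essentially the same route as the paper: the paper also reduces everything to the uniform-in-$n$ moment bounds $\sup_t \EXP{|b(\hat{X}_t)|^p}+\sup_t\EXP{|\sigma(\hat{X}_t)|^p}\leq C$ from Lemma~\ref{lemhatXpeval} and then obtains $\EXP{|\hat{X}_t-\hat{X}_s|^p}\leq C|t-s|^{Hp}$ before concluding via Kolmogorov's continuity theorem. The only cosmetic difference is that the paper gets the increment estimate by citing Lemma~2.4 of Abi~Jaber et al.~\cite{AVP}, whereas you rederive it from Lemma~\ref{lemcalcforlem}-(3),(4), which encodes the same kernel estimates; your emphasis on the $n$-independence of the constants is exactly the point of the lemma.
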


\begin{proof}
  Since
  \begin{equation*}
    \sup_{t\in[0,T]]} \EXP{|b(\hat{X}_t)|^p} + \sup_{t\in[0,T]]}\EXP{|\sigma(\hat{X}_t)|^p} \leq C
  \end{equation*}
  with $C$ independent of $n$ by Lemma \ref{lemhatXpeval}, applying Lemma 2.4 in \cite{AVP} yields
  \begin{equation*}
    \EXP{|\hat{X}_t-\hat{X}_s|^p} \leq C|t-s|^{H p}, \quad t,s \in [0,T],
  \end{equation*}
  and thus, the result follows.
\end{proof}

\begin{lemma}\label{lemdiffpeval}
  Let $p\geq 1$. Then the process $X_t - \hat{X}_t$ uniformly converges to zero in $L_p$ with the rate $n^{-H p}$ as $n$ goes to infinity, that is,
  \begin{equation*}
    \sup_{t\in [0,T] }\EXP{|X_t - \hat{X}_t|^p} \leq C n^{-H p},
  \end{equation*}
  where $C$ is a positive constant which does not depend on $n$.
\end{lemma}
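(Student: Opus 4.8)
The plan is to derive a Gronwall inequality for the function $\phi(t):=\EXP{|X_t-\hat{X}_t|^p}$, using Lemma~\ref{lemcalcforlem} to absorb the singular kernel and Lemma~\ref{lemhatXhd} to control the discretization of the argument. I would first treat the case $p>2\beta^\ast$, which suffices since the general case $p\geq1$ follows from the monotonicity of the $L_p$ norms, so that part~(2) of Lemma~\ref{lemcalcforlem} is applicable. Subtracting \eqref{eqapproxSVE} from \eqref{eqSVE} gives
\[
X_t-\hat{X}_t=\int_0^t K(t-s)\bigl(b(X_s)-b(\hat{X}_{\frac{[ns]}{n}})\bigr)\mr{d}s+\int_0^t K(t-s)\bigl(\sigma(X_s)-\sigma(\hat{X}_{\frac{[ns]}{n}})\bigr)\mr{d}W_s.
\]

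The key step is to split each increment of the coefficients into a Lipschitz part and a discretization part:
\[
b(X_s)-b(\hat{X}_{\frac{[ns]}{n}})=\bigl(b(X_s)-b(\hat{X}_s)\bigr)+\bigl(b(\hat{X}_s)-b(\hat{X}_{\frac{[ns]}{n}})\bigr),
\]
and likewise for $\sigma$. Since $b$ and $\sigma$ have bounded derivatives, they are Lipschitz, so the Lipschitz parts contribute, via parts (1) and (2) of Lemma~\ref{lemcalcforlem} applied to the adapted processes $X_s-\hat{X}_s$ and $\sigma(X_s)-\sigma(\hat{X}_s)$, a term bounded by $C\int_0^t\EXP{|X_s-\hat{X}_s|^p}\mr{d}s=C\int_0^t\phi(s)\mr{d}s$. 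For the discretization parts, the Lipschitz property together with the \hd estimate $\EXP{|\hat{X}_s-\hat{X}_{\frac{[ns]}{n}}|^p}\leq C|s-\tfrac{[ns]}{n}|^{Hp}\leq Cn^{-Hp}$ from Lemma~\ref{lemhatXhd} (recall $|s-\tfrac{[ns]}{n}|\leq 1/n$), fed into Lemma~\ref{lemcalcforlem} once more, yields a bound of order $n^{-Hp}$ uniformly in $t$. Combining these estimates produces
\[
\phi(t)\leq Cn^{-Hp}+C\int_0^t\phi(s)\mr{d}s,\qquad t\in[0,T].
\]

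Since $\phi$ is bounded on $[0,T]$ by Lemmas~\ref{lemXpeval} and \ref{lemhatXpeval}, it is integrable, so Gronwall's lemma applies directly and gives $\phi(t)\leq Cn^{-Hp}e^{Ct}\leq Cn^{-Hp}e^{CT}$, which is the claimed bound after taking the supremum over $t\in[0,T]$. Finally, for $1\leq p\leq 2\beta^\ast$ I would transfer the estimate by invoking $\EXP{|X_t-\hat{X}_t|^p}^{1/p}\leq\EXP{|X_t-\hat{X}_t|^{p'}}^{1/p'}$ for some fixed $p'>2\beta^\ast$.

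I do not expect a genuine obstacle here: the singularity of $K$, which is the main difficulty elsewhere in the paper, has already been packaged into Lemma~\ref{lemcalcforlem}, so that the convolution estimates return a clean bound $C\int_0^t\EXP{|\cdot|^p}\mr{d}s$ free of the kernel and the argument reduces to a standard linear Gronwall estimate. The only point requiring mild care is matching the exponent $n^{-Hp}$ coming from the one-step discretization error against the Lipschitz feedback term, and checking at the outset that $\phi$ is bounded so that Gronwall's lemma is legitimately applicable.
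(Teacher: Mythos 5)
Your proposal is correct and follows essentially the same route as the paper's proof: the same splitting of $b(X_s)-b(\hat{X}_{\frac{[ns]}{n}})$ into a Lipschitz part and a one-step discretization part, the same use of Lemma~\ref{lemcalcforlem}-(1),(2) and Lemma~\ref{lemhatXhd}, the same Gronwall argument, and the same reduction of $1\leq p\leq 2\beta^\ast$ to $p>2\beta^\ast$ via monotonicity of $L_p$ norms. The only cosmetic difference is that you justify Gronwall via boundedness of $\phi$ while the paper invokes continuity of $f$; both are adequate.
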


\begin{proof}
  We start with the case $p>2\beta^\ast$. First, we decompose the error as the following:
  \begin{align*}
    |X_t - \hat{X}_t|^p &= \left|\int_0^t K(t-s) (b(X_s) - b(\hat{X}_{\frac{[ns]}{n}}))\mr{d} s + \int_0^t K(t-s) (\sigma(X_s) - \sigma(\hat{X}_{\frac{[ns]}{n}}))\mr{d} W_s\right|^p \\
    &\leq 4^{p-1} \left( \left| \int_0^t K(t-s) (b(X_s) -b(\hat{X}_s)) \mr{d} s\right|^p+\left| \int_0^t K(t-s) (b(\hat{X}_s) - b(\hat{X}_{\frac{[ns]}{n}}))\mr{d} s\right|^p\right)\\
    &\hspace{1em}+4^{p-1} \left( \left| \int_0^t K(t-s) (\sigma(X_s) -\sigma(\hat{X}_s)) \mr{d} W_s\right|^p+\left| \int_0^t K(t-s) (\sigma(\hat{X}_s) - \sigma(\hat{X}_{\frac{[ns]}{n}}))\mr{d} W_s\right|^p\right)\\
    &=: 4^{p-1}(\,(\mr{i}) + (\mr{ii}) + (\mr{iii}) + (\mr{iv})\,)
  \end{align*}
  For (i) and (iii), Lemma \ref{lemcalcforlem}-(1),(2), and the Lipschitz condition yield that
  \begin{align*}
    \EXP{(\mr{i}) + (\mr{iii})} &\leq C \int_0^t \EXP{|b(X_s) -b(\hat{X}_s)|^p}+ \EXP{|\sigma(X_s) -\sigma(\hat{X}_s)|^p}\mr{d} s\\
    &\leq C \int_0^t \EXP{|X_s-\hat{X}_s|^p }\mr{d} s.
  \end{align*}
  By Lemmas \ref{lemcalcforlem}-(1),(2) and \ref{lemhatXhd},  we have for (ii) and (iv),
  \begin{align*}
    \EXP{(\mr{ii})+(\mr{iv})} &\leq C \int_0^t \EXP{|b(\hat{X}_s) -b(\hat{X}_{\frac{[ns]}{n}})|^p} +  \EXP{|\sigma(\hat{X}_s) -\sigma(\hat{X}_{\frac{[ns]}{n}})|^p}\mr{d} s\\
    &\leq C \int_0^t \EXP{|\hat{X}_s-\hat{X}_{\frac{[ns]}{n}}|^p}\mr{d} s\leq Cn^{-H p}.
  \end{align*}
  Hence, putting $f(t) = \EXP{|X_t-\hat{X}_t|^p}$, we obtain
  \begin{equation*}
    f(t) \leq C_1 n^{-H p} + C_2 \int_0^t f(s) \mr{d}s,
  \end{equation*}
  where $C_1, C_2$ are some positive constants independent of $n$ and $t$. By Lemmas \ref{lemXhd} and \ref{lemhatXhd}, $X$ and $\hat{X}$ are both continuous, so $f$ is continuous. Therefore, Gronwall's lemma yields that
  \begin{equation*}
    f(t) \leq C_1n^{-H p} e^{C_2t} \leq C n^{-H p}.
  \end{equation*}
  \par If $1\leq p\leq 2\beta^\ast$, it follows from the concavity that
  \begin{equation*}
    \EXP{|X_t-\hat{X}_t|^p}\leq \EXP{|X_t-\hat{X}_t|^q}^{\frac{p}{q}} \leq Cn^{-H q\cdot\frac{p}{q}} = Cn^{-H p}
  \end{equation*}
  for some $q >2\beta^\ast$. This completes the proof.
\end{proof}

From these estimations, an application of the Garsia-Rodemich-Rumsey
inequality gives the following result
as in Section 4.3.2 of Richard et al.~\cite{Richard-Tan-Yang2021}.
\begin{lemma}\label{lemsupdiffpeval}
  For all $p\geq 1$ and $\epsilon\in(0,H)$, there exists a constant $C>0$ which does not depend on $n$ such that
  \begin{equation*}
    \EXP{\sup_{t\in[0,T]}|X_t - \hat{X}_t|^p } \leq C n^{- p (H -\epsilon)} .
  \end{equation*}
\end{lemma}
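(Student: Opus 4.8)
The plan is to combine the pointwise $L_p$-estimate of Lemma~\ref{lemdiffpeval} with the uniform \hd regularity of $X$ and $\hat{X}$ from Lemmas~\ref{lemXhd} and \ref{lemhatXhd} by an interpolation, and then to pass from the resulting increment estimate to a bound on the supremum via the Garsia--Rodemich--Rumsey (GRR) inequality, exactly along the lines of Section~4.3.2 of Richard et al.~\cite{Richard-Tan-Yang2021}. Set $D_t := X_t - \hat{X}_t$; since $X_0 = \hat{X}_0$ we have $D_0 = 0$, and $D$ is continuous as both $X$ and $\hat{X}$ admit continuous versions. First I would record two families of increment bounds, valid for all $s,t\in[0,T]$ and all sufficiently large $q$. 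From $|D_t - D_s|\le |D_t|+|D_s|$ and Lemma~\ref{lemdiffpeval} we get the smallness estimate $\EXP{|D_t-D_s|^q}\le C n^{-Hq}$; from $|D_t-D_s|\le|X_t-X_s|+|\hat{X}_t-\hat{X}_s|$ and Lemmas~\ref{lemXhd}, \ref{lemhatXhd} (whose constants are independent of $n$) we get the regularity estimate $\EXP{|D_t-D_s|^q}\le C|t-s|^{Hq}$.

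Next I would interpolate. For $a,b\ge 0$ with $a+b=q$, splitting $|D_t-D_s|^q=|D_t-D_s|^a|D_t-D_s|^b$ and applying \hds inequality with the conjugate pair $(2,2)$, then inserting the smallness bound at exponent $2a$ and the regularity bound at exponent $2b$, yields \[ \EXP{|D_t-D_s|^q}\le C\, n^{-Ha}\,|t-s|^{Hb}. \] Given $\epsilon\in(0,H)$, I would tune the split so that the $n$-rate is precisely $H-\epsilon$: choosing $a=(H-\epsilon)q/H$ and $b=\epsilon q/H$ gives \[ \EXP{|D_t-D_s|^q}\le C\, n^{-(H-\epsilon)q}\,|t-s|^{\epsilon q},\qquad s,t\in[0,T]. \]

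I would then feed this into the GRR inequality with $\Psi(u)=u^q$ and gauge $\psi(u)=u^{\epsilon}$. GRR controls the $\alpha$-\hd seminorm of $D$, with $\alpha:=\epsilon-2/q$, by $C\,B^{1/q}$ where $B=\int_0^T\int_0^T |D_u-D_v|^q\,|u-v|^{-\epsilon q}\,\mr{d}u\,\mr{d}v$. Taking expectations and inserting the interpolated estimate, the integrand of $\EXP{B}$ is bounded by $C\,n^{-(H-\epsilon)q}$ uniformly in $u,v$, so $\EXP{B}\le C\,n^{-(H-\epsilon)q}$ and \[ \EXP{\Big(\sup_{s\ne t}\frac{|D_t-D_s|}{|t-s|^{\alpha}}\Big)^q}\le C\, n^{-(H-\epsilon)q}. \] Since $D_0=0$ we have $\sup_t|D_t|\le T^{\alpha}\sup_{s\ne t}|D_t-D_s|/|t-s|^{\alpha}$, whence $\EXP{\sup_t|D_t|^q}\le C\,n^{-(H-\epsilon)q}$ for every $q>2/\epsilon$ (which guarantees $\alpha>0$). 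For the remaining range $1\le p\le 2/\epsilon$ I would fix some $q>2/\epsilon$ and conclude by Jensen's inequality, $\EXP{\sup_t|D_t|^p}\le \EXP{\sup_t|D_t|^q}^{p/q}\le C\,n^{-(H-\epsilon)p}$.

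The delicate part is the bookkeeping that links the interpolation parameter, the GRR gauge exponent and the positivity requirement $\alpha>0$ so that, simultaneously, the rate is exactly $n^{-(H-\epsilon)}$ and the GRR double integral converges uniformly in $n$; this is what forces one to establish the estimate first for large $q$ and only afterwards descend to general $p$. One must also verify, as noted above, that every constant borrowed from Lemmas~\ref{lemXhd}--\ref{lemdiffpeval} is independent of $n$, which holds by construction.
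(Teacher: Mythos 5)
Your proof is correct and follows essentially the same route the paper intends: it interpolates between the pointwise rate of Lemma~\ref{lemdiffpeval} and the uniform \hd estimates of Lemmas~\ref{lemXhd} and \ref{lemhatXhd}, then applies the Garsia--Rodemich--Rumsey inequality as in Section~4.3.2 of Richard et al.~\cite{Richard-Tan-Yang2021}, which is exactly the (uncited-in-detail) argument the paper invokes. Your write-up in fact supplies the bookkeeping (choice of $a,b$, the constraint $q>2/\epsilon$, and the descent to small $p$ by Jensen) that the paper leaves to the reference.
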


\section{Proofs for Lemmas~2.2-8}\label{secproof}
We first show the following auxiliary lemma which plays a key role to prove Lemma \ref{lemvnvnvnwquadlim}.
\begin{lemma}\label{lemintkerneldiffertime}
  For $v < s$,
  \begin{equation*}
    n^{2H}\int_0^{\frac{[nv]}{n}}\left((s-u)^{H-1/2}-(\tfrac{[ns]}{n}-u)^{H-1/2} \right)\left((v-u)^{H-1/2}-(\tfrac{[nv]}{n}-u)^{H-1/2} \right) \mr{d}u \rightarrow 0.
  \end{equation*}
\end{lemma}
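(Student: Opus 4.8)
The plan is to split the integrand into a product of two kernel-increment factors of very different character, bound the \emph{regular} one pointwise (extracting a power of $n^{-1}$ uniformly in $u$) and integrate the \emph{singular} one exactly. Write $\alpha = H - \tfrac12 \in (-\tfrac12, 0]$. If $H = \tfrac12$ then $\alpha = 0$ and both bracketed differences vanish identically, so I may assume $\alpha < 0$; I may also assume $v > 0$, the case $v = 0$ being trivial since then $[nv]/n = 0$. First I note the integrand is nonnegative: for $0 \le u \le [nv]/n$ with $v < s$ one has $[nv]/n \le [ns]/n$, hence $s - u \ge [ns]/n - u \ge 0$ and $v - u \ge [nv]/n - u \ge 0$, and since $x \mapsto x^\alpha$ is decreasing for $\alpha < 0$ each bracketed difference is $\le 0$, so their product is $\ge 0$. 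It therefore suffices to bound the integral from above.

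For the ``$s$-factor'' I set $a := s - [ns]/n \in [0, 1/n)$ and apply the mean value theorem, using $\alpha - 1 < 0$ to bound the derivative at the smaller endpoint:
$|(s-u)^\alpha - ([ns]/n - u)^\alpha| \le |\alpha|\, a\, ([ns]/n - u)^{\alpha-1}$. The crucial point is that over the range $u \le [nv]/n$ there is \emph{no} singularity, because $[ns]/n - u \ge c_n := [ns]/n - [nv]/n$, and $c_n \to s - v > 0$ as $n \to \infty$, so $c_n$ is bounded away from $0$ for large $n$. Hence the $s$-factor is bounded, uniformly in $u$, by $|\alpha|\, c_n^{\alpha-1}\, a \le C/n$.

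For the ``$v$-factor,'' which does blow up as $u \uparrow [nv]/n$, I substitute $w = [nv]/n - u$ and put $b := v - [nv]/n \in [0,1/n)$, so that $[nv]/n + b = v$ and $\int_0^{[nv]/n} \big(([nv]/n - u)^\alpha - (v-u)^\alpha\big)\,\mr{d}u = \int_0^{[nv]/n} (w^\alpha - (w+b)^\alpha)\,\mr{d}w$. Because $\alpha + 1 = H + \tfrac12 > 0$ this is elementary and equals $(\alpha+1)^{-1}\big((v-b)^{\alpha+1} - v^{\alpha+1} + b^{\alpha+1}\big)$. The first two terms differ by $O(b) = O(1/n)$ (the map $x \mapsto x^{\alpha+1}$ is Lipschitz near the fixed point $v > 0$), while the last term is $b^{H+1/2} = O(n^{-(H+1/2)})$, which dominates since $H + \tfrac12 \le 1$. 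Thus the integral of the $v$-factor is $O(n^{-(H+1/2)})$. Combining, the whole expression is at most $n^{2H}\cdot (C/n) \cdot O(n^{-(H+1/2)}) = O(n^{H - 3/2})$, which tends to $0$ because $H \le \tfrac12$ forces $H - \tfrac32 \le -1 < 0$.

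The main obstacle is exactly the singularity of the $v$-factor near $u = [nv]/n$: a crude mean value estimate applied to \emph{both} factors would leave the divergent integral $\int_0 ([nv]/n - u)^{\alpha - 1}\,\mr{d}u$, which diverges precisely when $\alpha \le 0$, i.e. throughout our range $H \le \tfrac12$. One must therefore treat that factor more carefully, either by the exact integration above or, equivalently, by interpolating between the two bounds $|(w+b)^\alpha - w^\alpha| \le w^\alpha$ and $|(w+b)^\alpha - w^\alpha| \le |\alpha|\, b\, w^{\alpha-1}$ to obtain $|(w+b)^\alpha - w^\alpha| \le |\alpha|^\theta b^\theta w^{\alpha - \theta}$, then choosing $\theta \in (2H-1,\, H+\tfrac12)$ so that $w^{\alpha-\theta}$ is integrable at $0$ and the resulting power of $n$ is negative.
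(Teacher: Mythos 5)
Your argument is correct, and it takes a genuinely different route from the paper's. The paper substitutes $y=[nv]/n-u$ and rescales by $\delta_{(n,v)}=v-[nv]/n$ so that the whole expression is dominated by $\int_0^\infty \eta_n(z)\,\mathrm{d}z$ with $|\eta_n(z)|\le 2z^{H-1/2}\bigl(z^{H-1/2}-(z+1)^{H-1/2}\bigr)$, an integrable majorant, and then concludes by dominated convergence since $\eta_n\to 0$ pointwise (both shifted terms in the $s$-factor escape to infinity after rescaling because $s-v>0$). You instead exploit the same structural fact --- that for fixed $v<s$ the $s$-factor is nonsingular on $[0,[nv]/n]$ for large $n$ --- to bound it \emph{uniformly} by $C(s-v)^{H-3/2}n^{-1}$ via the mean value theorem, and then integrate the singular $v$-factor in closed form to get $O(n^{-(H+1/2)})$; your identification of the divergent integral $\int_0 ([nv]/n-u)^{H-3/2}\,\mathrm{d}u$ as the obstruction to a naive double mean-value estimate is exactly the right diagnosis. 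The trade-off: the paper's DCT argument is softer and yields only qualitative convergence, whereas your computation is entirely elementary (no dominated convergence needed) and produces an explicit rate $O(n^{H-3/2})$, uniform over $(v,s)$ with $s-v$ bounded away from zero. Your treatment of the degenerate cases ($H=1/2$, $v=0$, $b=0$, and the requirement that $[nv]<[ns]$ only for $n$ large) is careful and matches what the statement requires.
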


\begin{proof}
The claim is clear when $H = 1/2$.
Therefore we only need to consider the case $H < 1/2$.
  Put $\delta_{(n,v)} = v- \tfrac{[nv]}{n}$. 
Then, it follows
  \begin{equation*}
    \begin{split}
      &\hspace{1em}n^{2H}\int_0^{\frac{[nv]}{n}}\left((s-u)^{H-1/2}-(\tfrac{[ns]}{n}-u)^{H-1/2} \right)\left((v-u)^{H-1/2}-(\tfrac{[nv]}{n}-u)^{H-1/2} \right)  \mr{d}u \\
      &\leq n^{2H}\int_0^{\frac{[nv]}{n}}\left((y+s-\tfrac{[nv]}{n})^{H-1/2}-(y+\tfrac{[ns]}{n}-\tfrac{[nv]}{n})^{H-1/2} \right)\left((y+\delta_{(n,v)})^{H-1/2}-y^{H-1/2} \right) \mr{d} y \\
      &=(n\delta_{(n,v)})^{2H}\int_0^{\frac{[nv]}{n\delta_{(n,v)} }}\eta_n(z)  \mr{d} z \leq \int_0^\infty 1_{[0,\frac{[nv]}{n\delta_{(n,v)}}]}(z)\eta_n(z) \mr{d} z,
    \end{split}
  \end{equation*}
  where
  \begin{equation*}
    \eta_n(z)
     =\left(\left(z+\frac{s-[nv]/n}{v-[nv]/n}\right)^{H-1/2}-\left(z+\frac{[ns]/n-[nv]/n}{v-[nv]/n}\right)^{H-1/2}
      \right)\left((z+1)^{H-1/2}-z^{H-1/2} \right) 
  \end{equation*}
when $\delta_{(n,v)} > 0$ and $\eta_n(z) = 0$ otherwise.
  By the triangle inequality,
  \begin{equation*}
    \left|\left(z+\frac{s-[nv]/n}{v-[nv]/n}\right)^{H-1/2}-\left(z+\frac{[ns]/n-[nv]/n}{v-[nv]/n}\right)^{H-1/2}\right| \leq 2 \left(z+\frac{[ns]/n-[nv]/n}{v-[nv]/n}\right)^{H-1/2} \leq 2z^{H-1/2}
  \end{equation*}
  holds for all $z \in (0,\infty)$ when $\delta_{(n,v)} > 0$, so we have
  \begin{equation*}
    |1_{[0,\frac{[nv]}{n\delta_{(n,v)}}]}(z)\eta_n(z)| \leq 2 z^{H-1/2} \left(z^{H-1/2} - (z+1)^{H-1/2}\right).
  \end{equation*}
  Then, the evaluation
  \begin{equation*}
    \int_0^\infty z^{H-1/2} \left(z^{H-1/2} - (z+1)^{H-1/2} \right)\mr{d}z \leq \int_0^1 4z^{2H-1} \mr{d}z + \int_1^\infty 2 z^{H-1/2}z^{H-3/2} \mr{d}z < \infty
  \end{equation*}
  enables us to applying the dominated convergence theorem (DCT for short) that leads to
  \begin{equation}\label{eqksukvuevallim}
    \lim_{n\to \infty} \int_0^\infty 1_{[0,\frac{[nv]}{n\delta_{(n,v)}}]}(z)\eta_n(z) \mr{d} z = 0
  \end{equation}
  since $\eta_n(z) \rightarrow (0-0)((z+1)^{H-1/2}-z^{H-1/2} ) =  0$ for $v<s$.
\end{proof}

\subsection{Proof of Lemma \ref{lemvnvnvnwquadlim}-(i)}
Here we compute the limit of $\langle V^{n,k_1,i},V^{n,k_2,i} \rangle_t$.
Remind that
\begin{equation}\label{eqdecompdeltahatX}
  \begin{split}
    \hat{X}^k_s - \hat{X}^k_{\frac{[ns]}{n}} =& \int_0^{\frac{[ns]}{n}} \left(K(s-u)-K({\tfrac{[ns]}{n}}-u) \right) b^k(\hat{X}_{\frac{[nu]}{n}}) \mr{d} u + b^k (\hat{X}_{\frac{[ns]}{n}}) \int_{{\frac{[ns]}{n}}}^s K\left(s-u\right) \mr{d} u\\
    &+ \sum_{j=1}^m\int_0^{\frac{[ns]}{n}} \left(K(s-u)-K({\tfrac{[ns]}{n}}-u) \right) \sigma^k_j (\hat{X}_{\frac{[nu]}{n}}) \mr{d} W^j_u +  \sum_{j=1}^m\int_{{\frac{[ns]}{n}}}^s K\left(s-u\right) \sigma^k_j (\hat{X}_{\frac{[ns]}{n}}) \mr{d} W^j_u
  \end{split}
\end{equation}
and put the sums of the former two and latter two terms of the right-hand as $\psi^{n,k}_{1,s}, \psi^{n,k}_{2,s}$ respectively. Then we have
\begin{equation}\label{eqn2hhatXsnsn}
  \begin{split}
    \langle V^{n,k_1,i},V^{n,k_2,i} \rangle_t  &= \int_0^t n^{2H} (\hat{X}^{k_1}_s - \hat{X}^{k_1}_{\frac{[ns]}{n}})(\hat{X}^{k_2}_s - \hat{X}^{k_2}_{\frac{[ns]}{n}}) \mr{d}s\\
    &= \int_0^t n^{2H} (\psi^{n,k_1}_{1,s}(\hat{X}^{k_2}_s - \hat{X}^{k_2}_{\frac{[ns]}{n}}) +\psi^{n,k_1}_{2,s}\psi^{n,k_2}_{1,s}+\psi^{n,k_1}_{2,s}\psi^{n,k_2}_{2,s} ) \mr{d}s.
  \end{split}
\end{equation}
Define $\delta_{(n,s)} = s-\frac{[ns]}{n}$.  We first introduce the following evaluations:
\begin{equation}\label{eqKnsntosint}
  \begin{split}
    \int_{\frac{[ns]}{n}}^s K(s-u)^2 \mr{d}u &=  \int_0^{s-\frac{[ns]}{n}} \frac{t^{2H-1}}{G}  \mr{d}u= \frac{1}{2HG}\delta_{(n,s)}^{2H} \leq Cn^{-2H},\\
    \int_0^{\frac{[ns]}{n} } (K(s-u)-K(\tfrac{[ns]}{n}-u) )^2  \mr{d} u &= C \int_0^{\frac{[ns]}{n}} |\mu(u,s-\tfrac{[ns]}{n})|^2 \mr{d} u \\
    &=\delta_{(n,s)}^{2H} \int_0^{\frac{[ns]}{n\delta_{(n,s)}}} |\mu(r,1)|^2  \mr{d} r \\
    &\leq C\delta_{(n,s)}^{2H} \leq Cn^{-2H},
  \end{split}
\end{equation}
where $G=\Gamma(H+1/2)^2$ and $\mu(r,y)=(r+y)^{H-1/2}-r^{H-1/2}$.  We now show $n^{H}\psi^{n,k}_{1,s}$ vanishes in $L_2$ for $k\in\{k_1,k_2\}$ and any $s$.
By Minkowski's integral inequality, (\ref{eqkernelcondition}), and Lemma \ref{lemhatXpeval}, we have
\begin{align*}
  \EXP{|\psi^{n,k}_{1,s}|^2 } &\leq 2\EXP{\left|\int_0^{\frac{[ns]}{n}} \left(K(s-u)-K({\tfrac{[ns]}{n}}-u) \right) b^k(\hat{X}_{\frac{[nu]}{n}}) \mr{d} u\right|^2} + 2\EXP{\left|b^k (\hat{X}_{\frac{[ns]}{n}})\int_{{\frac{[ns]}{n}}}^s K\left(s-u\right) \mr{d} u\right|^2}\\
  &\leq 2\left(\int_0^{\frac{[ns]}{n}} \left(K(s-u)-K({\tfrac{[ns]}{n}}-u) \right) \EXP{|b^k(\hat{X}_{\frac{[nu]}{n}})|^2}^\frac{1}{2} \mr{d} u\right)^2+2\EXP{|b^k (\hat{X}_{\frac{[ns]}{n}})|^2}\left(\int_{{\frac{[ns]}{n}}}^s K\left(s-u\right) \mr{d} u\right)^2\\
  &\leq 2\sup_{r\in[0,T]}\EXP{|b^k(\hat{X}_r)|^2} \left\{\left(\int_0^{\frac{[ns]}{n}} \left(K(u+s-\tfrac{[ns]}{n})-K(u) \right) \mr{d} u\right)^2+\left(\int_0^{s-{\frac{[ns]}{n}}} K(u) \mr{d} u\right)^2\right\}\\
  &\leq C  n^{-(2H+1)},
\end{align*}
which implies $n^{H}\psi^{n,k}_{1,s}\to0$ in $L_2$ uniformly in $s$. Doing the same as the proof of Lemma \ref{lemcalcforlem}-(4) and using Lemma \ref{lemhatXpeval}, we have
\begin{align*}
  \EXP{n^{2H} |\psi^{n,k}_{2,s}|^2} &\leq 2n^{2H} \EXP{\left| \int_0^{\frac{[ns]}{n}} \left(K(s-u)-K({\tfrac{[ns]}{n}}-u) \right) \sigma^k (\hat{X}_{\frac{[nu]}{n}})^\top \mr{d} W_u\right|^2 + \left|\int_{{\frac{[ns]}{n}}}^s K\left(s-u\right) \sigma^k (\hat{X}_{\frac{[nu]}{n}})^\top \mr{d} W_u\right|^2}\\
  &\leq C n^{2H}\delta_{(n,s)}^{2H} \sup_{r\in[0,T]}\EXP{ |\sigma^k (\hat{X}_r)|^2} \leq C
\end{align*}
with $C$ being independent of $n$ and $s$, so by the Cauchy-Schwarz inequality and the bounded convergence theorem (BCT for short), we observe
\begin{equation}\label{eqpsi1tendzero}
  \begin{split}
    &\hspace{1em}\EXP{n^{2H} \left|\int_0^t (\psi^{n,k_1}_{1,s}(\hat{X}^{k_2}_s - \hat{X}^{k_2}_{\frac{[ns]}{n}}) +\psi^{n,k_1}_{2,s}\psi^{n,k_2}_{1,s})  \mr{d}s\right| }\\
    &\leq  \int_0^t \EXP{n^{2H}|\psi^{n,k_1}_{1,s}|^2}^\frac{1}{2} \EXP{n^{2H}|\hat{X}^{k_2}_s - \hat{X}^{k_2}_{\frac{[ns]}{n}}|^2}^\frac{1}{2} \mr{d}s+ \int_0^t \EXP{n^{2H}|\psi^{n,k_1}_{1,s}|^2}^\frac{1}{2}\EXP{n^{2H}|\psi^{n,k_2}_{2,s}|^2}^\frac{1}{2} \mr{d}s \xrightarrow{n\to\infty} 0
  \end{split}
\end{equation}

It becomes sufficient that we only consider the last term on the right-hand side of (\ref{eqn2hhatXsnsn}). We start with decomposing as follows:
\begin{align*}
  &n^{2H}\int_0^t \psi^{n,k_1}_{2,s}\psi^{n,k_2}_{2,s} \mr{d}s\\
  =&\sum_{j,l}^m n^{2H} \int_0^t \left(\int_0^{\frac{[ns]}{n}} f_{k_1j}(s,u) \mr{d} W^j_u\right)\left(\int_0^{\frac{[ns]}{n}} f_{k_2l}(s,u) \mr{d} W^l_u\right) \mr{d}s\\
  &+ \sum_{j,l}^m n^{2H}\int_0^t \sigma^{k_2}_l(\hat{X}_{\frac{[ns]}{n}})\left(\int_0^{\frac{[ns]}{n}} f_{k_1j}(s,u) \mr{d} W^j_u\right)\left(\int_{\frac{[ns]}{n}}^s K(s-u) \mr{d} W^l_u\right) \mr{d} s\\
  &+ \sum_{j,l}^m n^{2H}\int_0^t \sigma^{k_1}_j(\hat{X}_{\frac{[ns]}{n}})\left(\int_0^{\frac{[ns]}{n}} f_{k_2l}(s,u) \mr{d} W^l_u\right)\left(\int_{\frac{[ns]}{n}}^s K(s-u) \mr{d} W^j_u\right) \mr{d} s\\
  &+ \sum_{j,l}^m n^{2H} \int_0^t \sigma^{k_1}_j(\hat{X}_{\frac{[ns]}{n}})\sigma^{k_2}_l(\hat{X}_{\frac{[ns]}{n}})\left(\int_{\frac{[ns]}{n}}^s K(s-u) \mr{d} W^j_u\right)\left(\int_{\frac{[ns]}{n}}^s K(s-u) \mr{d} W^l_u\right) \mr{d}s\\
  =:& \sum_{j=1}^m \sum_{l=1}^m (\mathbf{I}_{jl} + \mathbf{II}_{jl} + \mathbf{III}_{jl} + \mathbf{IV}_{jl}),
\end{align*}
where
\begin{equation*}
  f_{kj}(s,u)=\left(K(s-u)-K(\tfrac{[ns]}{n}-u) \right)\sigma^{k}_j(\hat{X}_{\frac{[nu]}{n}}).
\end{equation*}
We use the following equality which is derived from It\^ o's formula: for any progressively measurable square integrable function $h_1,h_2$,
\begin{equation}\label{eqitoformulti}
  \begin{split}
    &\left(\int_s^t h_1(u) \mr{d} W^j_u\right)\left(\int_s^t h_2(u) \mr{d} W^l_u\right) \\
    &\hspace{2em}= \int_s^t \left(\int_s^u h_1(r) \mr{d} W^j_r\right) h_2(u) \mr{d}W^l_u+\int_s^t \left(\int_s^u h_2(r) \mr{d} W^l_r\right) h_1(u) \mr{d}W^j_u + \int_s^t h_1(u)h_2(u) \mr{d} \langle W^j,W^l \rangle_u.
  \end{split}
\end{equation}
\textit{For $\mathbf{I}_{jl}$.} According to (\ref{eqitoformulti}), $\mathbf{I}_{jl}$ can be written as
\begin{align*}
  \mathbf{I}_{jl} &= n^{2H} \int_0^t \left[ \int_0^{\frac{[ns]}{n}} \left(\int_0^u  f_{k_1j}(s,r) \mr{d} W^j_r\right)  f_{k_2l}(s,u) \mr{d}W^l_u\right.\\
  &\hspace{6em}+ \left.\int_0^{\frac{[ns]}{n}}\left(\int_s^u  f_{k_2l}(s,r) \mr{d} W^l_r\right)  f_{k_1j}(s,u) \mr{d}W^j_u + \int_0^{\frac{[ns]}{n}} f_{k_1j}(s,u)f_{k_2l}(s,u)\mr{d} \langle W^j,W^l \rangle_u\right] \mr{d}s,
\end{align*}
For the last term, it vanishes if $j\neq l$, and if $j=l$, it can be transformed as
\begin{equation}\label{eqfk1jfk2j}
  \begin{split}
    &\hspace{1em}n^{2H} \int_0^t  \int_0^{\frac{[ns]}{n}}  f_{k_1j}(s,u)f_{k_2j}(s,u)\mr{d} u \mr{d}s \\
    &= n^{2H} \int_0^t \int_0^{\frac{[ns]}{n}} \left(K(r+\delta_{(n,s)})-K(r)\right)^2 \sigma^{k_1}_j(\hat{X}_{\frac{[ns]+[-nr]}{n}}) \sigma^{k_2}_j(\hat{X}_{\frac{[ns]+[-nr]}{n}})\mr{d} r \mr{d}s\\
    &= \int_0^t (n\delta_{(n,s)})^{2H}\frac{1}{G} \int_0^{\frac{[ns]}{n\delta_{(n,s)}}} |\mu(r,1)|^2 \prod_{q\in\{k_1,k_2\}}\sigma^{q}_j(\hat{X}_{\frac{[ns]+[-n\delta_{(n,s)} r]}{n}})\mr{d} r\mr{d}s.
  \end{split}
\end{equation}
We will apply Lemma \ref{lemrefineddelattre} here, so it must be checked that the hypothesis is satisfied. We are to show that
\begin{equation*}
  \int_0^{\frac{[ns]}{n\delta_{(n,s)}}} |\mu(r,1)|^2 \prod_{q\in\{k_1,k_2\}}\sigma^{q}_j(\hat{X}_{\frac{[ns]+[-n\delta_{(n,s)} r]}{n}})\mr{d} r \xrightarrow {L_2(\mr{d}s \otimes \mr{d}\mathsf{P})}\sigma^{k_1}_j(X_s)\sigma^{k_2}_j(X_s) \int_0^{\infty}|\mu(r,1)|^2\mr{d} r .
\end{equation*}
We note that the limit on the right is certainly a continuous function of $s$. It follows from Fubini's theorem and Minkowski's integral inequality that
\begin{equation}\label{eqsigmahatXsigmaXl2}
  \begin{split}
    &\EXP{ \int_0^t \left| \int_0^\infty  |\mu(r,1)|^2 \left(1_{\{(0,\frac{[ns]}{n\delta_{(n,s)}})\}}(r) \prod_{q\in\{k_1,k_2\}}\sigma^{q}_j(\hat{X}_{\frac{[ns]+[-n\delta_{(n,s)} r]}{n}}) - \sigma^{k_1}_j(X_s)\sigma^{k_2}_j(X_s) \right)\mr{d}r \right|^2 \mr{d}s}\\
    \leq& \int_0^t \left(\int_0^\infty   |\mu(r,1)|^2 \left\| 1_{\{(0,\frac{[ns]}{n\delta_{(n,s)}})\}}(r) \prod_{q\in\{k_1,k_2\}}\sigma^{q}_j(\hat{X}_{\frac{[ns]+[-n\delta_{(n,s)} r]}{n}}) - \sigma^{k_1}_j(X_s)\sigma^{k_2}_j(X_s)  \right\|_{L_2}\mr{d}r \right)^2 \mr{d}s.
  \end{split}
\end{equation}
Remark that $\|\cdot\|_{L_p}$ simply means the $L_p$ norm with respect to $\mathsf{P}$. Then, by Minkowski's inequality,
\begin{align*}
  &\left\| 1_{\{(0,\frac{[ns]}{n\delta_{(n,s)}})\}}(r)\sigma^{k_1}_j(\hat{X}_{\frac{[ns]+[-n\delta_{(n,s)} r]}{n}}) \sigma^{k_2}_j(\hat{X}_{\frac{[ns]+[-n\delta_{(n,s)} r]}{n}}) - \sigma^{k_1}_j(X_s)\sigma^{k_2}_j(X_s)  \right\|_{L_2}\\
  &=1_{\{(0,\frac{[ns]}{n\delta_{(n,s)}})\}}(r)  \left\| \sigma^{k_1}_j(\hat{X}_{\frac{[ns]+[-n\delta_{(n,s)} r]}{n}}) \sigma^{k_2}_j(\hat{X}_{\frac{[ns]+[-n\delta_{(n,s)} r]}{n}}) - \sigma^{k_1}_j(X_s)\sigma^{k_2}_j(X_s)\right\|_{L_2}\\
  &\hspace{1em} + 1_{\{(\frac{[ns]}{n\delta_{(n,s)}},\infty)\}}(r)\left\| \sigma^{k_1}_j(X_s)\sigma^{k_2}_j(X_s)  \right\|_{L_2}
\end{align*}
holds, and here the last term vanishes as $n\to\infty$. By the Cauchy-Schwarz inequality, Minkowski's inequality, the Lipschitz continuity of $\sigma$, Lemmas \ref{lemXpeval}, \ref{lemhatXpeval}, \ref{lemhatXhd}, and \ref{lemdiffpeval}, we have
\begin{equation}\label{eqsigmahatXsigmaXsq}
  \begin{split}
    &\left\| \prod_{k\in\{k_1,k_2\}}\sigma^{k}(\hat{X}_{\frac{[ns]+[-n\delta_{(n,s)} r]}{n}}) - \sigma^{k_1}_j(X_s)\sigma^{k_2}_j(X_s)\right\|_{L_2}\\
    & \leq\left\| \sigma^{k_1}_j(\hat{X}_{\frac{[ns]+[-n\delta_{(n,s)} r]}{n}}) \{\sigma^{k_2}_j(\hat{X}_{\frac{[ns]+[-n\delta_{(n,s)} r]}{n}}) - \sigma^{k_2}_j(X_s)\}\right\|_{L_2} +\left\| \{\sigma^{k_1}_j(\hat{X}_{\frac{[ns]+[-n\delta_{(n,s)} r]}{n}}) - \sigma^{k_1}_j(X_s)\}\sigma^{k_2}_j(X_s)\right\|_{L_2} \\
    &\leq \left\| \sigma^{k_1}_j(\hat{X}_{\frac{[ns]+[-n\delta_{(n,s)} r]}{n}})\right\|_{L_4}\left\| \sigma^{k_2}_j(\hat{X}_{\frac{[ns]+[-n\delta_{(n,s)} r]}{n}}) - \sigma^{k_2}_j(X_s)\right\|_{L_4} + \left\| \sigma^{k_2}_j(X_s)\right\|_{L_4}\left\| \sigma^{k_1}_j(\hat{X}_{\frac{[ns]+[-n\delta_{(n,s)} r]}{n}}) - \sigma^{k_1}_j(X_s)\right\|_{L_4} \\
    &\leq C \|  \hat{X}_{\frac{[ns]+[-n\delta_{(n,s)} r]}{n}} - \hat{X}_s + \hat{X}_s - X_s \|_{L_4} \leq C ( \| \hat{X}_{\frac{[ns]+[-n\delta_{(n,s)} r]}{n}} - \hat{X}_s\|_{L_4} + \|\hat{X}_s - X_s \|_{L_4}) \\
    &\leq C \left( \left|\tfrac{[ns]+[-n\delta_{(n,s)} r]}{n}- s\right|^H + n^{-H}\right) \leq C n^{-H} \rightarrow 0.
  \end{split}
\end{equation}
Therefore, since
\begin{equation*}
  \frac{[ns]}{n\delta_{(n,s)}} \to \infty, \quad \frac{[ns]+[-n\delta_{(n,s)} r]}{n} \to s,
\end{equation*}
we get
\begin{equation*}
  \left\| 1_{\{(0,\frac{[ns]}{n\delta_{(n,s)}})\}}(r)\sigma^{k_1}_j(\hat{X}_{\frac{[ns]+[-n\delta_{(n,s)} r]}{n}}) \sigma^{k_2}_j(\hat{X}_{\frac{[ns]+[-n\delta_{(n,s)} r]}{n}}) - \sigma^{k_1}_j(X_s)\sigma^{k_2}_j(X_s)  \right\|_{L_2}  \rightarrow 0.
\end{equation*}
Consequently, the right-hand side of (\ref{eqsigmahatXsigmaXl2}) tends to zero by applying the DCT for the integral of $\mr{d}s$ and $\mr{d}r$ respectively, and then, Lemma \ref{lemrefineddelattre} gives the evaluation for (\ref{eqfk1jfk2j}) as follows:
\begin{align*}
  &n^{2H} \int_0^t  \int_0^{\frac{[ns]}{n}}  f_{k_1j}(s,u)f_{k_2j}(s,u)\mr{d} u \mr{d}s \\
  =&\int_0^t (ns-[ns])^{2H}\frac{1}{G} \int_0^{\frac{[ns]}{n\delta_{(n,s)}}} |\mu(r,1)|^2 \prod_{q\in\{k_1,k_2\}}\sigma^{q}_j(\hat{X}_{\frac{[ns]+[-n\delta_{(n,s)} r]}{n}})\mr{d} r\mr{d}s\\
  &\xrightarrow[\text{in } L_2]{n\to \infty}\frac{1}{(2H+1)G} \int_0^\infty |\mu(r,1)|^2 \mr{d} r \int_0^t \sigma^{k_1}_j(X_s)\sigma^{k_2}_j(X_s)  \mr{d}s.
\end{align*}
We next show that the remainder term of $\mathbf{I}_{jl}$,
\begin{equation*}
  I_1^{jl} := n^{2H} \int_0^t  \int_0^{\frac{[ns]}{n}} \left(\int_0^u  f_{k_1j}(s,r) \mr{d} W^j_r\right)  f_{k_2l}(s,u) \mr{d}W^l_u \mr{d}s,
\end{equation*}
converges to zero in $L_2$ as $n\to\infty$. Set
\begin{equation*}
  D^{jl}_{1,s} := n^{2H}\int_0^{\frac{[ns]}{n}} 1_{(0,\frac{[ns]}{n})}(u) \left(\int_0^u  f_{k_1j}(s,r) \mr{d} W^j_r\right)  f_{k_2l}(s,u) \mr{d}W^l_u,
\end{equation*}
and observe that
\begin{align*}
  \EXP{|I_1^{jl}|^2} &=\EXP{ \int_0^t \int_0^t D^{jl}_{1,s} D^{jl}_{1,v} \mr{d}v \mr{d}s}\\
  &=2 \int_0^t \int_0^s \EXP{ D^{jl}_{1,s} D^{jl}_{1,v} } \mr{d} v\mr{d}s
\end{align*}
by Fubini's theorem. Then, by (\ref{eqitoformulti}) and Fubini's theorem, we have
\begin{equation}\label{eqd1snd1vn}
  \begin{split}
    &\EXP{D^{jl}_{1,s}D^{jl}_{1,v}}\\
    =&n^{4H}\EXP{\int_0^{\frac{[nv]}{n}}  \left(\int_0^u  f_{k_1j}(s,r) \mr{d} W^j_r\right) \left(\int_0^u  f_{k_1j}(v,r) \mr{d} W^j_r\right)f_{k_2l}(s,u)f_{k_2l}(v,u) \mr{d}\langle W^j,W^l\rangle_u }\\
    =& \begin{cases}
    0 ,& \text{if } j \neq l,\\
    n^{4H}\int_0^{\frac{[nv]}{n}}  \left(K(s-u)-K(\tfrac{[ns]}{n}-u) \right)\left(K(v-u)-K(\tfrac{[nv]}{n}-u) \right)\tilde{f}(u)\mr{d}u, & \text{if } j=l,
  \end{cases}
\end{split}
\end{equation}
where
\begin{equation*}
  \tilde{f}(u) :=\EXP{|\sigma^{k_2}_j(\hat{X}_{\frac{[nu]}{n}})|^2  \left(\int_0^u  f_{k_1j}(s,r) \mr{d} W^j_r\right) \left(\int_0^u  f_{k_1j}(s,r) \mr{d} W^j_r\right)}.
\end{equation*}
Again by (\ref{eqitoformulti}), it follows that for $u \in [0,\frac{[nv]}{n}]$,
\begin{equation}\label{eqhatx2g1nu}
  \begin{split}
    \left|\tilde{f}(u)\right| \leq& \left|\EXP{|\sigma^{k_2}_j(\hat{X}_{\frac{[nu]}{n}})|^2\int_0^{u}\left(\int_0^{r} f_{k_1j}(s,y) \mr{d} W^j_y\right) f_{k_1j}(v,r)\mr{d}W^j_r}\right| \\
    &+\left|\EXP{|\sigma^{k_2}_j(\hat{X}_{\frac{[nu]}{n}})|^2\int_0^u \left(\int_0^{r} f_{k_1j}(v,y) \mr{d} W^j_y\right) f_{k_1j}(s,r) \mr{d} W^j_r}\right| \\
    &+\left|\EXP{|\sigma^{k_2}_j(\hat{X}_{\frac{[nu]}{n}})|^2\int_0^{u}f_{k_1j}(s,r) f_{k_1j}(v,y)\mr{d} r}\right|.
  \end{split}
\end{equation}
For the first term on the right-hand side, by using the Cauchy-Schwarz inequality, the It\^ o isometry, Fubini's theorem, Lipschitz continuity of $\sigma$, and Lemma \ref{lemhatXpeval} properly, we see
\begin{equation}\label{eqhatx2g1nu1}
  \begin{split}
    &\hspace{1em}\left|\EXP{|\sigma^{k_2}_j(\hat{X}_{\frac{[nu]}{n}})|^2\int_0^{u}\left(\int_0^{r} f_{k_1j}(s,y) \mr{d} W^j_y\right) f_{k_1j}(v,r)\mr{d}W^j_r}\right| \\
    &\leq\EXP{|\sigma^{k_2}_j(\hat{X}_{\frac{[nu]}{n}})|^4}^{\frac{1}{2}} \EXP{\int_0^{u}\left(\int_0^{r} f_{k_1j}(s,y) \mr{d} W^j_y\right)^2 |f_{k_1j}(v,r)|^2\mr{d}r}^{\frac{1}{2}} \\
    &\leq C\left( \int_0^{u} \left(K(v-r)-K(\tfrac{[nv]}{n}-r)\right)^2\EXP{\left(\int_0^{r}f_{k_1j}(s,y) \mr{d} W^j_y\right)^2 \sigma^{k_1}_j(\hat{X}_{\frac{[nr]}{n}})^2}\mr{d}r \right)^{\frac{1}{2}}
  \end{split}
\end{equation}
for some $C$ which does not depend on $n$. Applying \hds inequality, the BDG inequality, Minkowski's integral inequality, Lemma~\ref{lemhatXpeval}, and (\ref{eqKnsntosint}) properly, we have
\begin{align*}
  \EXP{\left(\int_0^{r}f_{k_1j}(s,y) \mr{d} W^j_y\right)^2 \sigma^{k_1}_j(\hat{X}_{\frac{[nr]}{n}})^2} &\leq \EXP{\left(\int_0^{r}f_{k_1j}(s,y) \mr{d} W^j_y\right)^{\beta} }^\frac{2}{\beta} \EXP{\sigma^{k_1}_j(\hat{X}_{\frac{[nr]}{n}})^{2\beta^\ast} }^\frac{1}{\beta^\ast}\\
  &\leq C \EXP{\left(\int_0^{r}|f_{k_1j}(s,y)|^2 \mr{d} y\right)^{\frac{\beta}{2}} }^\frac{2}{\beta}\\
  &\leq C \int_0^{r}\left(K(s-y)-K(\tfrac{[ns]}{n}-y) \right)^2\EXP{|\sigma^{k_1}_j(\hat{X}_{\frac{[ny]}{n}})|^{\beta}}^\frac{2}{\beta} \mr{d} y \\
  &\leq C \sup_{t\in[0,T]} \EXP{|\sigma^{k_1}_j(\hat{X}_t)|^\beta}^\frac{2}{\beta} \int_0^{r}\left(K(s-y)-K(\tfrac{[ns]}{n}-y) \right)^2  \mr{d} y \\
  &\leq C \int_0^{\frac{[ns]}{n} } \left(K(s-y)-K(\tfrac{[ns]}{n}-y) \right)^2  \mr{d} y \leq Cn^{-2H}.
\end{align*}
Hence, (\ref{eqhatx2g1nu1}) is evaluated as
\begin{equation}\label{eqhatx2g1nu1a}
  \begin{split}
    &\left|\EXP{|\sigma^{k_2}_j(\hat{X}_{\frac{[nu]}{n}})|^2\int_0^{u}\left(\int_0^{r} f_{k_1j}(s,y) \mr{d} W^j_y\right) f_{k_1j}(v,r)\mr{d}W^j_r}\right| \\
    &\leq C n^{-H} \left(\int_0^{\frac{[nv]}{n}} \left(K(v-r)-K(\tfrac{[nv]}{n}-r)\right)^2 \mr{d} r \right)^{\frac{1}{2}}\leq C n^{-2H}.
  \end{split}
\end{equation}
Similarly, we have the same evaluation as (\ref{eqhatx2g1nu1})-(\ref{eqhatx2g1nu1a}) for the second term on the right-hand side of (\ref{eqhatx2g1nu}). For the last term remained in (\ref{eqhatx2g1nu}), by Fubini's theorem we have
\begin{align*}
  &\hspace{1em}\left|\EXP{|\sigma^{k_2}_j(\hat{X}_{\frac{[nu]}{n}})|^2\int_0^{u}f_{k_1j}(s,r) f_{k_1j}(v,y)\mr{d} r}\right| \\
  &=\EXP{|\sigma^{k_2}_j(\hat{X}_{\frac{[nu]}{n}})|^2\int_0^{u}\left(K(v-r)-K(\tfrac{[nv]}{n}-r) \right)\left(K(s-r)-K(\tfrac{[ns]}{n}-r) \right)|\sigma^{k_1}_j(\hat{X}_{\frac{[nr]}{n}})|^2 \mr{d} r}\\
  &=\int_0^{u}\left(K(v-r)-K(\tfrac{[nv]}{n}-r) \right)\left(K(s-r)-K(\tfrac{[ns]}{n}-r) \right)\EXP{|\sigma^{k_2}_j(\hat{X}_{\frac{[nu]}{n}})|^2|\sigma^{k_1}_j(\hat{X}_{\frac{[nr]}{n}})|^2} \mr{d} r.
\end{align*}
Since the evaluation
\begin{equation*}
  \EXP{|\sigma^{k_2}_j(\hat{X}_{\frac{[nu]}{n}})|^2|\sigma^{k_1}_j(\hat{X}_{\frac{[nr]}{n}})|^2}
   \leq
   \EXP{|\sigma^{k_2}_j(\hat{X}_{\frac{[nu]}{n}})|^4}^{1/2}\EXP{|\sigma^{k_1}_j(\hat{X}_{\frac{[nr]}{n}})|^4}^{1/2}
   \leq C < \infty
\end{equation*}
is derived from the Cauchy-Schwarz inequality and Lemma \ref{lemhatXpeval}, it follows from the same inequality and (\ref{eqKnsntosint}) that
\begin{align*}
  &\left|\EXP{|\sigma^{k_2}_j(\hat{X}_{\frac{[nu]}{n}})|^2\int_0^{u}f_{k_1j}(s,r) f_{k_1j}(v,y)\mr{d} r}\right| \\
  &\leq C \left(\int_0^{\frac{[nv]}{n}}\left(K(v-r)-K(\tfrac{[nv]}{n}-r) \right)^2 \mr{d} r\right)^{\frac{1}{2}} \left( \int_0^{\frac{[ns]}{n}}\left(K(s-r)-K(\tfrac{[ns]}{n}-r) \right)^2 \mr{d} r \right)^{\frac{1}{2}}\\
  &\leq C n^{-2H}.
\end{align*}
Therefore, $|\tilde{f}(u)| \leq Cn^{-2H}$ is obtained with $C$ being independent of $n$, and it is estimated that for (\ref{eqd1snd1vn}),
\begin{align*}
  \left|\EXP{D^{jl}_{1,s}D^{jl}_{1,v}}\right| &\leq n^{4H}\int_0^{\frac{[nv]}{n}}\left(K(s-u)-K(\tfrac{[ns]}{n}-u) \right)\left(K(v-u)-K(\tfrac{[nv]}{n}-u) \right)  |\tilde{h}_n(u)|  \mr{d}u \\
  &\leq Cn^{2H}\int_0^{\frac{[nv]}{n}}\left(K(s-u)-K(\tfrac{[ns]}{n}-u) \right)\left(K(v-u)-K(\tfrac{[nv]}{n}-u) \right)  \mr{d} u.
\end{align*}
Therefore, Lemma \ref{lemintkerneldiffertime} leads to $|\mathbb{E}[D^{jl}_{1,s}D^{jl}_{1,v}]| \to 0$. Applying the BCT with respect to the integration of $\mr{d}v \otimes \mr{d}s$, we have $I_1^{jl} \xrightarrow[\text{in } L_2]{n\to \infty} 0$, and thus,
\begin{equation*}
  \mathbf{I}_{jl} \xrightarrow[\text{in } L_2]{n\to \infty}
  \begin{cases}
    \frac{1}{(2H+1)G} \int_0^\infty |\mu(r,1)|^2 \mr{d} r \int_0^t \sigma^{k_1}_j(X_s)\sigma^{k_2}_j(X_s)  \mr{d}s, & \text{if }j=l,\\
    0, & \text{if }j\neq l.
  \end{cases}
\end{equation*}
\textit{For $\mathbf{II}_{jl}$.} Write $\mathbf{II}_{jl} = \int_0^t \sigma^{k_2}_l(\hat{X}_{\frac{[ns]}{n}}) D^{jl}_{2,s} \mr{d} s$, where the process $D^{jl}_2$ is the following:
\begin{equation*}
  D^{jl}_{2,s} = n^{2H} \left(\int_0^{\frac{[ns]}{n}} f_{k_1j}(s,u) \mr{d} W^j_u\right)\left(\int_{\frac{[ns]}{n}}^s K(s-u) \mr{d} W^l_u\right)
\end{equation*}
We are to show $\mathbf{II}_{jl}\to 0$ for all $j,l$. Fubini's theorem yields that
\begin{align*}
  \EXP{|\mathbf{II}_{jl}|^2} &=\EXP{ \int_0^t \int_0^t   \sigma^{k_2}_l(\hat{X}_{\frac{[ns]}{n}}) D^{jl}_{2,s}\sigma^{k_2}_l(\hat{X}_{\frac{[nv]}{n}}) D^{jl}_{2,v} \mr{d}v\mr{d}s}\\
  &= \int_0^t 2 \int_0^s \EXP{\sigma^{k_2}_l(\hat{X}_{\frac{[ns]}{n}}) D^{jl}_{2,s}\sigma^{k_2}_l(\hat{X}_{\frac{[nv]}{n}}) D^{jl}_{2,v}} \mr{d}v\mr{d}s\\
  &= 2\int_0^t  \int_0^{\frac{[ns]}{n}} \EXP{\sigma^{k_2}_l(\hat{X}_{\frac{[ns]}{n}}) D^{jl}_{2,s}\sigma^{k_2}_l(\hat{X}_{\frac{[nv]}{n}}) D^{jl}_{2,v}} \mr{d}v\mr{d}s \\
  &\hspace{2em}+2 \int_0^t \int_{\frac{[ns]}{n}}^s \EXP{\sigma^{k_2}_l(\hat{X}_{\frac{[ns]}{n}}) D^{jl}_{2,s}\sigma^{k_2}_l(\hat{X}_{\frac{[nv]}{n}}) D^{jl}_{2,v}} \mr{d}v\mr{d}s\\
  &=:2(I^{jl}_{2,1}+I^{jl}_{2,2}).
\end{align*}
Then, it follows that for all $n$,
\begin{align*}
  I^{jl}_{2,1} &=\int_0^t  \int_0^{\frac{[ns]}{n}} \EXP{ \sigma^{k_2}_l(\hat{X}_{\frac{[nv]}{n}})  D^{jl}_{2,v}  \sigma^{k_2}_l(\hat{X}_{\frac{[ns]}{n}}) \EXP{D^{jl}_{2,s} \middle| \filt{F}_{\frac{[ns]}{n}} } } \mr{d}v\mr{d}s\\
  &=\int_0^t  \int_0^{\frac{[ns]}{n}} \EXP{\sigma^{k_2}_l(\hat{X}_{\frac{[nv]}{n}}) D^{jl}_{2,v}\sigma^{k_2}_l(\hat{X}_{\frac{[ns]}{n}})n^{2H} \int_0^{\frac{[ns]}{n}} f_{k_1j}(s,u) \mr{d} W^j_u \EXP{\int_{\frac{[ns]}{n}}^s K(s-u) \mr{d}  W^l_u \middle| \filt{F}_{\frac{[ns]}{n}} }} \mr{d}v\mr{d}s\\
  &=0.
\end{align*}
We next show $I^{jl}_{2,2}$ tends to zero. By the Cauchy-Schwarz inequality,
\begin{equation*}
  \EXP{\sigma^{k_2}_l(\hat{X}_{\frac{[ns]}{n}}) D^{jl}_{2,s}\sigma^{k_2}_l(\hat{X}_{\frac{[nv]}{n}}) D^{jl}_{2,v}} \leq \EXP{|\sigma^{k_2}_l(\hat{X}_{\frac{[ns]}{n}}) D^{jl}_{2,s}|^2}^\frac{1}{2} \EXP{|\sigma^{k_2}_l(\hat{X}_{\frac{[nv]}{n}}) D^{jl}_{2,v}|^2}^\frac{1}{2}
\end{equation*}
follows, and by again the same inequality, along with Lemma \ref{lemhatXpeval}, the BDG inequality, and (\ref{eqKnsntosint}), we have the following evaluation:
\begin{align*}
  \EXP{|\sigma^{k_2}_l(\hat{X}_{\frac{[ns]}{n}}) D^{jl}_{2,s}|^2} &\leq \EXP{|\sigma^{k_2}_l(\hat{X}_{\frac{[ns]}{n}})|^4}^{\frac{1}{2}} \EXP{|D^{jl}_{2,s}|^4}^\frac{1}{2}\\
  &\leq C n^{4H} \EXP{\left(\int_0^{\frac{[ns]}{n}} f_{k_1j}(s,u) \mr{d} W^j_u\right)^8}^{\frac{1}{4}}\EXP{\left(\int_{\frac{[ns]}{n}}^s K(s-u) \mr{d} W^l_u\right)^8}^\frac{1}{4}\\
  &\leq C n^{4H} \EXP{\left(\int_0^{\frac{[ns]}{n}} |f_{k_1j}(s,u)|^2 \mr{d} u\right)^4}^{\frac{1}{4}}\EXP{\left(\int_{\frac{[ns]}{n}}^s |K(s-u)|^2 \mr{d} u\right)^4}^\frac{1}{4}\\
  &\leq C n^{4H}\int_0^{\frac{[ns]}{n}} \left(K(s-u)-K(\tfrac{[ns]}{n}-u) \right)^2 \EXP{|\sigma^{k_1}_j(\hat{X}_{\frac{[nu]}{n}})|^8}^\frac{1}{4} \mr{d} u\int_{\frac{[ns]}{n}}^s K(s-u)^2 \mr{d} u\\
  &\leq C \sup_{r\in[0,T]}\EXP{|\sigma^{k_1}_j(\hat{X}_r)|^8}^\frac{1}{4} n^{2H} \int_0^{\frac{[ns]}{n}} \left(K(s-u)-K(\tfrac{[ns]}{n}-u) \right)^2 \mr{d} u\leq C
\end{align*}
with $C$ being independent of $n$ and $s$. Hence,
\begin{equation*}
  \left|\EXP{\sigma^{k_2}_l(\hat{X}_{\frac{[ns]}{n}}) D^{jl}_{2,s} \sigma^{k_2}_l(\hat{X}_{\frac{[nv]}{n}})  D^{jl}_{2,v}}\right| \leq C
\end{equation*}
holds, and therefore, the BCT yields
\begin{equation*}
  \lim_{n\to \infty} I^{jl}_{2,2} = \int_0^t \int_0^t \lim_{n\to \infty} 1_{(\frac{[ns]}{n},s)} (v) \EXP{\sigma^{k_2}_l(\hat{X}_{\frac{[ns]}{n}}) D^{jl}_{2,s} \sigma^{k_2}_l(\hat{X}_{\frac{[nv]}{n}})  D^{jl}_{2,v}} \mr{d}v\mr{d}s = 0,
\end{equation*}
which concludes that $\displaystyle \lim_{n\to \infty}\EXP{|\mathbf{II}_{jl}|^2} = 0$, namely, $\mathbf{II}_{jl} \xrightarrow[\text{in } L_2]{n\to\infty}0$.\\
\textit{For $\mathbf{III}_{jl}$.} Similarly to $\mathbf{II}_{jl}$, it holds that $\mathbf{III}_{jl} \to 0$ in $L_2$.\\
\textit{For $\mathbf{IV}_{jl}$.} According to (\ref{eqitoformulti}), we have
\begin{multline}\label{eqIVjl}
  \mathbf{IV}_{jl}= n^{2H} \int_0^t E^{jl}_s \left[ \int_{\frac{[ns]}{n}}^s \left(\int_{\frac{[ns]}{n}}^u K(s-r) \mr{d} W^j_r\right) K(s-u) \mr{d} W^l_u \right.\\
  \left. \hspace{3em}+\int_{\frac{[ns]}{n}}^s \left(\int_{\frac{[ns]}{n}}^u K(s-r) \mr{d} W^l_r\right) K(s-u) \mr{d} W^j_u+ \int_{\frac{[ns]}{n}}^s |K(s-u)|^2 \mr{d}\langle W^j, W^l \rangle_u \right]\mr{d}s,
\end{multline}
where $E^{jl}_s = \sigma^{k_1}_j(\hat{X}_{\frac{[ns]}{n}})\sigma^{k_2}_l(\hat{X}_{\frac{[ns]}{n}})$.
Then, for the last term, it vanishes if $j\neq l$. Otherwise,
\begin{align*}
  n^{2H} \int_0^t E^{jj}_s   \int_{\frac{[ns]}{n}}^s |K(s-u)|^2 \mr{d}u \mr{d}s &=\int_0^t  E^{jj}_s   \frac{n^{2H} }{2HG}\delta_{(n,s)}^{2H} \mr{d}s \\
  &= \int_0^t E^{jj}_s    \frac{(ns-[ns])^{2H}}{2HG}\mr{d}s
\end{align*}
follows from (\ref{eqKnsntosint}), and therefore, Lemma \ref{lemrefineddelattre} yields
\begin{equation*}
  \int_0^t E^{jj}_s  \frac{(ns-[ns])^{2H}}{2HG}\mr{d}s \xrightarrow[\text{in } L_2 ]{n\to \infty} \frac{1}{2HG(2H+1)}\int_0^t \sigma^{k_1}_j(X_s)\sigma^{k_2}_j(X_s)\mr{d}s
\end{equation*}
since
\begin{equation*}
  \EXP{\int_0^t |\sigma^{k_1}_j(\hat{X}_{\frac{[ns]}{n}})\sigma^{k_2}_j(\hat{X}_{\frac{[ns]}{n}}) - \sigma^{k_1}_j(X_s)\sigma^{k_2}_j(X_s)|^2 \mr{d}s} \leq Cn^{-H} \rightarrow 0
\end{equation*}
is given by a similar calculation to (\ref{eqsigmahatXsigmaXsq}). We now consider the remaining first two terms of (\ref{eqIVjl}). It suffices to show that one of them vanishes as $n\to\infty$ since they are symmetry . Define $I^{jl}_4$ as
\begin{equation*}
  I^{jl}_4 = \int_0^t E^{jl}_s D^{jl}_{4,s} \mr{d} s,
\end{equation*}
where
\begin{equation*}
  D^{jl}_{4,s} = n^{2H} \int_{\frac{[ns]}{n}}^s g_j(s,u) \mr{d} W^l_u,\quad     g_j(s,u) = K(s-u)\int_{\frac{[ns]}{n}}^u K(s-r) \mr{d} W^j_r .
\end{equation*}
Here, Fubini's theorem yields that
\begin{align*}
  \EXP{|I^{jl}_4|^2} &= \EXP{\int _0^t \int_0^t  E^{jl}_s D^{jl}_{4,s}E^{jl}_v  D^{jl}_{4,v} \mr{d} v \mr{d} s}\\
  &= 2 \int _0^t \int_0^{\frac{[ns]}{n}} \EXP{E^{jl}_s D^{jl}_{4,s}E^{jl}_v D^{jl}_{4,v}} \mr{d} v \mr{d} s + 2 \int _0^t \int_{\frac{[ns]}{n}}^s \EXP{E^{jl}_s  D^{jl}_{4,s}E^{jl}_v D^{jl}_{4,v}} \mr{d} v \mr{d} s\\
  &=: 2I^{jl}_{4,1} + 2I^{jl}_{4,2}.
\end{align*}
For $v \leq \frac{[ns]}{n}$, since $E^{jl}_s$ is $\filt{F}_{\frac{[ns]}{n}}$-measurable,  the tower property provides
\begin{align*}
  I^{jl}_{4,1} &= \int _0^t \int_0^{\frac{[ns]}{n}} \EXP{E^{jl}_v  D^{jl}_{4,v} E^{jl}_s  \EXP{D^{jl}_{4,s}\middle| \filt{F}_{\frac{[ns]}{n}} }} \mr{d} v \mr{d} s\\
  &= \int _0^t \int_0^{\frac{[ns]}{n}} \EXP{E^{jl}_v  D^{jl}_{4,v} E^{jl}_s  n^{2H}  \EXP{ \int_{\frac{[ns]}{n}}^s  g_j(s,u) \mr{d} W_u \middle| \filt{F}_\frac{[ns]}{n} }} \mr{d} v \mr{d} s\\
  &=0.
\end{align*}
On the other hand, we can rewrite $I_{4,2}$ as
\begin{equation*}
  I^{jl}_{4,2} =\int _0^t \int_0^t 1_{(\frac{[ns]}{n},s)}(v) \EXP{E^{jl}_s  D^{jl}_{4,s}E^{jl}_v D^{jl}_{4,v}} \mr{d} v \mr{d} s = \int _0^t \int_0^t 1_{(\frac{[ns]}{n},s)}(v) \EXP{|E^{jl}_v|^2  D^{jl}_{4,s}D^{jl}_{4,v}} \mr{d} v \mr{d} s
\end{equation*}
since $\tfrac{[nv]}{n} = \tfrac{[ns]}{n}$ for $v\in(\tfrac{[ns]}{n},s)$. We have
\begin{equation*}
  \left|1_{(\frac{[ns]}{n},s)}(v)\EXP{|E^{jl}_v|^2 D^{jl}_{4,s} D^{jl}_{4,v}}\right| =  1_{(\frac{[ns]}{n},s)}(v)\left|\EXP{|E^{jl}_v|^2\EXP{D^{jl}_{4,s}D^{jl}_{4,v}\middle| \filt{F}_{\frac{[nv]}{n}} } } \right| .
\end{equation*}
Then, it follows from (\ref{eqitoformulti}) that
\begin{align*}
  \EXP{D^{jl}_{4,s} D^{jl}_{4,v}\middle| \filt{F}_{\frac{[nv]}{n}} } =&\, n^{4H}\EXP{\int_{\frac{[ns]}{n}}^s g_j(s,u) \mr{d} W^l_u \int_{\frac{[ns]}{n}}^s 1_{(\frac{[nv]}{n},v)}(u) g_j(v,u) \mr{d} W^l_u\middle|\filt{F}_{\frac{[nv]}{n}} }\\
  =&n^{4H}\int_{\frac{[nv]}{n}}^v \EXP{g_j(s,u)g_j(v,u)\middle| \filt{F}_{\frac{[nv]}{n}} } \mr{d}u.
\end{align*}
The last equal follows from Fubini's theorem. Substituting $g_j$, we have
\begin{align*}
  &\hspace{1em}\EXP{g_j(s,u)g_j(v,u)\middle| \filt{F}_{\frac{[nv]}{n}} } \\
  &= K(s-u) K(v-u)\EXP{\left( \int_{\frac{[nv]}{n}}^u K(s-r) \mr{d} W^l_r\right) \left( \int_{\frac{[nv]}{n}}^u K(v-r) \mr{d} W^l_r\right)\middle| \filt{F}_{\frac{[nv]}{n}} }\\
  &= K(s-u) K(v-u)\int_{\frac{[nv]}{n}}^u K(s-r) K(v-r) \mr{d} r.
\end{align*}
Since $K$ is decreasing, using (\ref{eqKnsntosint}), we obtain
\begin{align*}
  \left|\EXP{D^{jl}_{4,s} D^{jl}_{4,v}\middle| \filt{F}_{\frac{[nv]}{n}} }\right| &= n^{4H} \left|\int_{\frac{[nv]}{n}}^v K(s-u) K(v-u) \int_{\frac{[nv]}{n}}^u K(s-r) K(v-r) \mr{d} r \mr{d}u\right|\\
  &\leq n^{4H} \int_{\frac{[nv]}{n}}^v K(v-u)^2 \int_{\frac{[nv]}{n}}^u K(v-r)^2 \mr{d} r \mr{d}u\\
  &\leq \left(n^{2H} \int_{\frac{[nv]}{n}}^v K(v-u)^2 \mr{d}u \right)^2\leq C,
\end{align*}
and hence, we have
\begin{equation*}
  \left|1_{(\frac{[ns]}{n},s)}(v)\EXP{|E^{jl}_v|^2  D^{jl}_{4,s}E^{jl}_v D^{jl}_{4,v}} \right| \leq C\EXP{|E^{jl}_v|^2}\leq C,
\end{equation*}
with $C$ being independent of $s, v,$ and $n$. Therefore, by the BCT, we get
\begin{equation*}
  |I^{jl}_{4,2}| \leq \int _0^t \int _0^t 1_{(\frac{[ns]}{n},s)}(v)\left|\EXP{|E^{jl}_v|^2  D^{jl}_{4,s}E^{jl}_v D^{jl}_{4,v}} \right| \mr{d} v \mr{d} s \xrightarrow{n\to\infty} 0.
\end{equation*}
By the above evaluations, $I^{jl}_{4}$ converges to zero in $L_2$, and thus,
\begin{equation*}
  \mathbf{IV}_{jl} \xrightarrow[\text{in } L_2]{n\to\infty}
  \begin{cases}
    \frac{1}{2HG(2H+1)}\int_0^t \sigma^{k_1}_j(X_s)\sigma^{k_2}_j(X_s) \mr{d}s& \text{if } j=l,\\
    0,& \text{if } j\neq l.
  \end{cases}
\end{equation*}
Finally, we arrive at
\begin{equation*}
  n^{2H}\int_0^t \psi^{n,k_1}_{2,s}\psi^{n,k_2}_{2,s} \mr{d}s \rightarrow \sum_{j=1}^m\frac{1}{2H(2H+1)G} \left( 2H\int_0^\infty |\mu(r,1)|^2 \mr{d} r +  1 \right)\int_0^t \sigma^{k_1}_j(X_s)\sigma^{k_2}_j(X_s)\mr{d}s.
\end{equation*}
Since
\begin{equation*}
  2H\int_0^\infty |\mu(r,1)|^2 \mr{d} r +1 = \frac{G}{\Gamma(2H)\sin \pi H}
\end{equation*}
holds by Mishura (Theorem 1.3.1 and Lemma A.0.1 in \cite{Mishura}), by taking (\ref{eqn2hhatXsnsn}) and (\ref{eqpsi1tendzero}) into account, we obtain for all $i\in\{1,\dots,m\}$ and $(k_1,k_2)\in\{1,\dots,d\}^2$,
\begin{equation*}
  \langle V^{n,k_1,i},V^{n,k_2,i} \rangle_t \xrightarrow[\text{in }L_1]{n \to \infty} \sum_{j=1}^m \frac{1}{\Gamma(2H+2)\sin \pi H}\int_0^t \sigma^{k_1}_j(X_s)\sigma^{k_2}_j(X_s)\mr{d}s.
\end{equation*}
\qed

\subsection{Proof of Lemma \ref{lemvnvnvnwquadlim}-(ii)}
Here we compute the limit of
\begin{equation*}
  \langle V^{n,k,i},W^i \rangle_t = \int_0^t n^{H} (\hat{X}^k_s - \hat{X}^k_{\frac{[ns]}{n}})\mr{d}s
\end{equation*}
for $i \in \{1,\dots, m\}$.
Write $\Delta \hat{X}_s = \hat{X}_s - \hat{X}_{\frac{[ns]}{n}}$. Then, it follows from Fubini's theorem that
\begin{align*}
  \EXP{|\langle V^{n,k,i},W^i \rangle_t|^2} &= \EXP{n^{2H}\int_0^t \int_0^t \Delta \hat{X}^k_s\Delta \hat{X}^k_v \mr{d}v\mr{d}s} \\
  &= \int_0^t 2\int_0^s n^{2H}\EXP{\Delta \hat{X}^k_s\Delta \hat{X}^k_v }\mr{d}v\mr{d}s \\
  &= 2 \int_0^t \int_0^\frac{[ns]}{n} n^{2H}\EXP{\Delta \hat{X}^k_s\Delta \hat{X}^k_v }\mr{d}v\mr{d}s+2\int_0^t \int_\frac{[ns]}{n}^s n^{2H}\EXP{\Delta \hat{X}^k_s\Delta \hat{X}^k_v }\mr{d}v\mr{d}s.
\end{align*}
Write $h_k(s,u) = K(s-u)-K(\tfrac{[ns]}{n}-u) b^k(\hat{X}_{\frac{[nu]}{n}})$.
Using the same notation $f_{kj}$ as in the above proof, by the tower property and (\ref{eqdecompdeltahatX}), we observe that for $v\in(0,\frac{[ns]}{n}), s\in (0,t)$,
\begin{align*}
  \EXP{\Delta \hat{X}^k_s\Delta \hat{X}^k_v } &= \EXP{\Delta \hat{X}^k_v \EXP{\Delta \hat{X}^k_s \middle| \filt{F}_{\frac{[ns]}{n}} } }\\
  &= \EXP{\Delta \hat{X}^k_v \left(\int_0^\frac{[ns]}{n} h_k(s,u) \mr{d}u +\int_\frac{[ns]}{n}^s K(s-u) b^k(\hat{X}_{\frac{[nu]}{n}}) \mr{d}u \right)}\\
  &\hspace{1em} +\sum_{j=1}^m\EXP{ \Delta \hat{X}^k_v \int_0^{\frac{[ns]}{n}}  f_{kj}(s,u) \mr{d}W^j_u }\\
  &= \EXP{\Delta \hat{X}^k_v \left(\int_0^\frac{[ns]}{n} h_k(s,u) \mr{d}u +\int_\frac{[ns]}{n}^s K(s-u) b^k(\hat{X}_{\frac{[nu]}{n}}) \mr{d}u \right)}\\
  &\hspace{1em}+\sum_{j=1}^m\EXP{ \int_0^{\frac{[ns]}{n}} f_{kj}(s,u) \mr{d}W^j_u\left( \int_0^{\frac{[nv]}{n}} h_k(v,u) \mr{d}u +\int_{\frac{[nv]}{n}}^v K(v-u) b^k(\hat{X}_{\frac{[nu]}{n} })\mr{d}u \right)}\\
  &\hspace{1em}+ \sum_{j=1}^m\sum_{l=1}^m \EXP{ \int_0^{\frac{[ns]}{n}}  f_{kj}(s,u) \mr{d}W^j_u \left( \int_0^{\frac{[nv]}{n}}  f_{kl}(v,u) \mr{d}W^l_u +\int_{\frac{[nv]}{n}}^v K(v-u)\sigma^k_l(\hat{X}_{\frac{[nu]}{n} }) \mr{d}W^l_u \right)}\\
  &=: (\mr{I}) + \sum_{j=1}^m (\mr{II})_j + \sum_{j=1}^m\sum_{l=1}^m(\mr{III})_{jl}
\end{align*}
According to (\ref{eqkernelcondition}), (I) is calculated as
\begin{align*}
  \left| (\mr{I})\right|&\leq  \EXP{\int_0^\frac{[ns]}{n}\left|\Delta \hat{X}^k_v h_k(s,u) \right|\mr{d}u +\int_\frac{[ns]}{n}^s K(s-u)\left| \Delta \hat{X}^k_vb^k(\hat{X}_{\frac{[nu]}{n}}) \right|\mr{d}u}\\
  &\leq  \int_0^\frac{[ns]}{n} (K(\tfrac{[ns]}{n}-u)-K(s-u))\EXP{ \left|\Delta \hat{X}^k_v b^k(\hat{X}_{\frac{[nu]}{n}})\right|} \mr{d}u +\int_\frac{[ns]}{n}^s K(s-u) \EXP{ \left|\Delta \hat{X}^k_v b^k(\hat{X}_{\frac{[nu]}{n}})\right|}\mr{d}u\\
  &\leq Cn^{-H} \left( \int_0^\frac{[ns]}{n} (K(\tfrac{[ns]}{n}-u)-K(s-u)) \mr{d}u +\int_\frac{[ns]}{n}^s K(s-u) \mr{d}u \right) \leq Cn^{-(2H+1/2)}
\end{align*}
since
\begin{equation*}
  \EXP{ \left|\Delta \hat{X}^k_v b^k(\hat{X}_{\frac{[nu]}{n}})\right|} \leq \EXP{|\Delta \hat{X}^k_v|^2}^\frac{1}{2} \EXP{ b^k(\hat{X}_{\frac{[nu]}{n}})^2}^\frac{1}{2} \leq Cn^{-H}
\end{equation*}
is true by Lemmas \ref{lemhatXpeval} and \ref{lemhatXhd} (remark $C$ is independent of $u,v$, and $s$). Hence, this term vanishes as $n\to\infty$ even if it is multiplied by $n^{2H}$. For $(\mr{II})_j$, we can derive the same evaluation as $(\mr{I})$ since
\begin{equation*}
  \mathbb{E}[|\int_0^{\frac{[ns]}{n}} f_{kj}(s,u) \mr{d}W^j_u|^2]^{\frac{1}{2}} \leq \int_0^{\frac{[ns]}{n}}  \mathbb{E}[|f_{kj}(s,u)|^2] \mr{d}u \leq Cn^{-H}
\end{equation*}
holds by the It\^o isometry, so $n^{2H}(\mr{II})_j$ vanishes as $n\to\infty$. What remained to be considered is $(\mr{III})_{jl}$ . For the former part of $(\mr{III})_{jl}$, it follows from (\ref{eqitoformulti}) that
\begin{align*}
  &\hspace{1em}\left|\EXP{\int_0^{\frac{[ns]}{n} } f_{kj}(s,u) \mr{d}W^j_u  \int_0^{\frac{[ns]}{n} }  1_{(0,\frac{[nv]}{n} )}(u) f_{kl}(v,u) \mr{d}W^l_u  }\right|\\
  &= \int_0^{\frac{[nv]}{n}}\left(K(s-u)-K(\tfrac{[ns]}{n}-u) \right)\left(K(v-u)-K(\tfrac{[nv]}{n}-u) \right) \EXP{|\sigma^k_j(\hat{X}_{\frac{[nu]}{n} })\sigma^k_l(\hat{X}_{\frac{[nu]}{n} })|} \mr{d}\langle W^j , W^l \rangle.
\end{align*}
If $j\neq l$, it is zero. If $j=l$, since $\sup_{t\in[0,T]} \EXP{|\sigma^k_j(\hat{X}_t)|^2}  < C$ for some finite $C$ which does not depend on $n$ by Lemma \ref{lemhatXpeval}, it follows from Lemma \ref{lemintkerneldiffertime} that
\begin{equation*}
  n^{2H} \EXP{\int_0^{\frac{[ns]}{n} } f_{kj}(s,u) \mr{d}W^j_u  \int_0^{\frac{[ns]}{n} }  1_{(0,\frac{[nv]}{n} )}(u) f_{kl}(v,u) \mr{d}W^l_u } \rightarrow 0.
\end{equation*}
For the latter part of $(\mr{III})_{jl}$, it holds by (\ref{eqitoformulti}) that
\begin{align*}
  &\EXP{ \int_0^{\frac{[ns]}{n}} f_{kj}(s,u) \mr{d}W^j_u \int_0^{\frac{[ns]}{n}} 1_{(\frac{[nv]}{n},v)}(u) K(v-u)\sigma^k_j(\hat{X}_{\frac{[nu]}{n} }) \mr{d}W^j_u} \\
  =&\EXP{\int_{\frac{[nv]}{n}}^v f_{kj}(s,u)  K(v-u)\sigma^k_l(\hat{X}_{\frac{[nu]}{n} })  \mr{d}\langle W^j, W^l \rangle_u}.
\end{align*}
Then, it becomes zero if $j\neq l$, and if $j=l$, again by Lemma \ref{lemhatXpeval},
\begin{align*}
  \left|\EXP{\int_{\frac{[nv]}{n}}^v  f_{kj}(s,u) K(v-u)\sigma^k_j(\hat{X}_{\frac{[nu]}{n} }) \mr{d}u}\right| &= \EXP{|\sigma^k_j(\hat{X}_{\frac{[nv]}{n} })|^2} \int_{\frac{[nv]}{n}}^v \left(K(s-u)-K(\tfrac{[ns]}{n}-u) \right) K(v-u) \mr{d}u\\
  &\leq C \int_0^{v-\frac{[nv]}{n}} \left((r+s-v)^{H-1/2}-(r+\tfrac{[ns]}{n}-v)^{H-1/2} \right) r^{H-1/2} \mr{d}u\\
  &\leq C n^{-2H} \int_0^1 \left((r+\tfrac{s-v}{\delta_{(n,v)}})^{H-1/2}-(r+\tfrac{[ns]/n-v}{\delta_{(n,v)}})^{H-1/2} \right) r^{H-1/2} \mr{d}u
\end{align*}
holds. Doing similarly to (\ref{eqksukvuevallim}) yields
\begin{equation*}
  \int_0^1 \left((r+\tfrac{s-v}{\delta_{(n,v)}})^{H-1/2}-(r+\tfrac{[ns]/n-v}{\delta_{(n,v)}})^{H-1/2} \right) r^{H-1/2} \mr{d}u \rightarrow 0,
\end{equation*}
and thus, it holds
\begin{equation*}
  n^{2H} \EXP{\int_{\frac{[nv]}{n}}^v  f_{kj}(s,u) K(v-u)\sigma^k_j(\hat{X}_{\frac{[nu]}{n} }) \mr{d}u} \rightarrow 0,
\end{equation*}
which concludes that $n^{2H}(\mr{III})_{jl} \to 0$. Therefore,
\begin{equation*}
  n^{2H}\EXP{\Delta \hat{X}^k_s\Delta \hat{X}^k_v } \xrightarrow{n\to\infty} 0, \quad 0< v < \frac{[ns]}{n}, 0 < s < t.
\end{equation*}
On the other hand, for $v\in(\frac{[ns]}{n},s), s\in(0,t)$, from Lemma \ref{lemhatXhd}, together with the Cauchy-Schwarz inequality,
\begin{equation}\label{eqdelhatXsdelhatXveval}
  \left|\EXP{\Delta \hat{X}^k_s\Delta \hat{X}^k_v }\right| \leq \EXP{|\Delta \hat{X}^k_s|^2}^{\frac{1}{2}}\EXP{|\Delta \hat{X}^k_v|^2}^{\frac{1}{2}} \leq C \left(s-\tfrac{[ns]}{n}\right)^{H}\left(v-\tfrac{[nv]}{n}\right)^{H} \leq Cn^{-2H}
\end{equation}
follows and
\begin{equation*}
  \int_0^t \int_0^t 1_{[\frac{[ns]}{n}, s]} n^{2H}\EXP{\Delta \hat{X}^k_s\Delta \hat{X}^k_v } \xrightarrow{n\to\infty} 0
\end{equation*}
holds by the BCT. Since (\ref{eqdelhatXsdelhatXveval}) is valid for all $s,v\in[0,t]$, applying the BCT with respect to the integral of $\mr{d}v\otimes \mr{d} s$ and concluding above discussions, we obtain for all $n\in\Natu, k\in\{1,\dots,d\}, i\in\{1,\dots,m\}$,
\begin{equation*}
  \EXP{|\langle V^{n,k,i}, W^i \rangle_t|^2} \xrightarrow{n\to\infty} 0.
\end{equation*}
\qed

\subsection{Proof of Lemma \ref{lemvnlawconv}}
By Lemma \ref{lemvnvnvnwquadlim}, we can apply Theorem 4-1 of Jacod
\cite{Jacod1997} to see that $\V^{n}$ stably converges in law in
$\mcl{C}_0$ to a conditionally Gaussian martingale $\V = \{V^{k,j}\}$ with
\begin{equation*}
  \begin{aligned}
    \langle V^{k_1,j_1},V^{k_2,j_2} \rangle_t &= \left\{
    \begin{alignedat}{3}
      &\sum_{l=1}^m \frac{1}{\Gamma(2H+2)\sin \pi H}\int_0^t \sigma^{k_1}_l(X_s)\sigma^{k_2}_l(X_s)\mr{d}s, & \quad \text{if } j_1=j_2,\\
      &0,&\text{if } j_1\neq j_2,
    \end{alignedat}
    \right.\\
    \langle V^{k,i},W^{j} \rangle_t &= 0, \quad \,^\forall k\in \{1,\dots,d\} , \,^\forall (i,j)\in \{1,\dots,m\}^2 .
  \end{aligned}
\end{equation*}
Furthermore, since $\mr{d}\langle V,V \rangle_t \ll \mr{d} t$, Proposition 1-4 of Jacod \cite{Jacod1997} yields that $V$ is represented as
\begin{equation*}
  V^{k,j} = \sum_{l=1}^m \frac{1}{\sqrt{\Gamma(2H+2)\sin \pi H}}\int_0^\cdot \sigma^k_l(X_s)\mr{d}B^{l,j}_s
\end{equation*}
where $B$ is $m^2$-dimensional standard Brownian motion which is independent of $W$. This concludes the proof.
\qed

\subsection{Proof of Lemma \ref{lemsecondtermzero}}
Here we show the convergence to $0$ of
\begin{equation*}
  \int_0^t K(t-s)n^H  \nabla b^i(\hat{X}_{\frac{[ns]}{n}})^\top (\hat{X}_s - \hat{X}_{\frac{[ns]}{n}}) \mr{d}s = \sum_{k=1}^d \int_0^t K(t-s) \partial_k b^i(\hat{X}_{\frac{[ns]}{n}}) \mr{d}\langle V^{n,k,j}, W^j\rangle_s.
\end{equation*}
Since $\langle V^{n,k,j}, W^j\rangle_t$ tends to zero in $L_1$ for all
$t\in[0,T]$ by Lemma \ref{lemvnvnvnwquadlim}, we will obtain the result
by using Theorem 7.10 of Kurtz and Protter \cite{Kurtz1996} and the
continuity of $\mcl{J}^\alpha$ given in Lemma~\ref{lemJalphaconti}. 
We start with showing the tightness in $\mcl{C}_0$ of $\{\langle V^{n,k,j}, W^j\rangle\}_{n\in\Natu}$. By Minkowski's integral inequality and Lemma \ref{lemhatXhd}, we have for $0\leq s<t\leq T$,
\begin{align*}
  \EXP{|\langle V^{n,k,j}, W^j\rangle_t-\langle V^{n,k,j}, W^j\rangle_s|^p}  &\leq n^{Hp} \left(\int_s^t \EXP{|\hat{X}^k_u - \hat{X}^k_{\frac{[nu]}{n}}|^p}^\frac{1}{p} \mr{d}u \right)^p\\
  &\leq C n^{Hp} \left(\int_s^t (u - \tfrac{[nu]}{n})^{H} \mr{d}y \right)^p \leq C |t-s|^p.
\end{align*}
Therefore, Kolmogorov's continuity theorem implies $\mathbb{E}[\|\langle V^{n,k,j}, W^j\rangle\|_{\mcl{C}^{\rho}_0}]\leq C_\rho$ for any $\rho\in(0,1)$ with $C_\rho$ being independent of $n$, and then, $\{\langle V^{n,k,j}, W^j\rangle\}_{n\in\Natu}$ is tight in $\mcl{C}^{\rho}_0$ for any $\rho\in(0,1)$ by Theorem \ref{thmvntight}. Assume an arbitrary subsequence $\{\langle V^{n_q,k,j}, W^j\rangle\}_{q\in\Natu}$ of $\{\langle V^{n,k,j},W\rangle\}_{n\in\Natu}$. Since this subsequence is tight in $\mcl{C}^{\rho}_0$, too, Prokhorov's theorem yields that there exists a further subsequence $\{\langle V^{n'_q,k,j},W^j\rangle\}_{q\in\Natu}$  of $\{\langle V^{n_q,k,j},W^j\rangle\}_{q\in\Natu}$ and a process $F\in \mcl{C}^{\rho}_0$ such that $\langle V^{n'_q,k,j},W^j\rangle$ weakly tends to $F$ in $\mcl{C}^{\rho}_0$. However, $\langle V^{n'_q,k,j},W^j\rangle_t$ tends to zero in $L_1$ by Lemma $\ref{lemvnvnvnwquadlim}$ for all $t$, so $F_t = 0$ for all $t$. Thus, $\langle V^{n,k,j},W^j\rangle \xrightarrow{n\to\infty} 0$ in law in $\mcl{C}^{\rho}_0$ and in $\mcl{C}_0$ as well. This implies $\langle V^{n,k,j},W^j\rangle$ tends to zero also in probability in $\mcl{C}_0$.
Let $\{Y^{n}\}_{n\in\Natu}$ be the set of simple predictable processes almost surely bounded by one. By Lemma \ref{lemhatXhd}, we have
\begin{align*}
  \EXP{\left|\int_0^t Y^{n}_{s-} \mr{d} \langle V^{n,k,j},W^j\rangle_s\right|} \leq \int_0^t n^{H} \EXP{|\hat{X}^k_s - \hat{X}^k_{\frac{[ns]}{n}}|}\mr{d} s \leq C,
\end{align*}
which implies $\langle V^{n,k,j},W^j\rangle$ is uniform tight in the
sense of Definition 7.4 of Kurtz and Protter~\cite{Kurtz1996}. 
Noting also that $\hat{X} - X \to 0$ in probability in $\mcl{C}_0$
by Lemma~\ref{lemsupdiffpeval},
Theorem 7.10 of Kurtz and Protter \cite{Kurtz1996} yields
\begin{equation*}
  ( \partial_k b^i(\hat{X}_\frac{[n\cdot]}{n}), \langle V^{n,k,j},W^j \rangle, \partial_k b^i(\hat{X}_\frac{[n\cdot]}{n}) \cdot \langle V^{n,k,j},W^j \rangle )  \xrightarrow[\text{stably in law in }\mathcal{D}]{n\to\infty} (\partial_k b^i(X), 0 ,  0).
\end{equation*}
This implies $\int_0^\cdot \partial_k b^i(\hat{X}_\frac{[ns]}{n}) \mr{d}\langle V^{n,k,j},W^j \rangle_s $ converges in probability in $\mcl{C}_0$ to zero process. We have also for $0\leq s<t\leq T$,
\begin{align*}
  &\EXP{\left|\sum_{k=1}^d\left(\int_0^t \partial_k b^i(\hat{X}_\frac{[nu]}{n}) \mr{d}\langle V^{n,k,j},W^j\rangle_u - \int_0^s \partial_k b^i(\hat{X}_\frac{[nu]}{n}) \mr{d}\langle V^{n,k,j},W^j\rangle_u\right)\right|^p} \\
  &\leq\left( \int_s^t n^{H} \sum_{k=1}^d \EXP{|\partial_k b^i(\hat{X}_\frac{[nu]}{n})  (\hat{X}^k_u - \hat{X}^k_{\frac{[nu]}{n}})|^p}^\frac{1}{p}\mr{d}u\right)^p \\
  &\leq C \left( \int_s^t n^{H}(u - \tfrac{[nu]}{n})^{H}\mr{d}u\right)^p\leq C |t-s|^p
\end{align*}
by Minkowski's integral inequality, Lemma \ref{lemhatXhd}, and the boundedness of the derivatives of $b$. Thus, Kolmogorov's continuity theorem yields
\begin{equation*}
  \EXP{\left\| \sum_{k=1}^d\int_0^\cdot \partial_k b^i(\hat{X}_\frac{[ns]}{n}) \mr{d}\langle V^{n,k,j},W^j \rangle_u \right\|_{\mcl{C}^{\rho}_0}} \leq C
\end{equation*}
with $C$ being independent of $n$ for any $\rho\in(0,1)$ . Consequently, by Corollary \ref{corvnconvhld}, we have the process $\sum_{k}\int_0^\cdot \partial_k b^i(\hat{X}_\frac{[ns]}{n}) \mr{d}\langle V^{n,k,j},W^j \rangle_u $ converges in $\mcl{C}^{1/2-\epsilon}$ to zero process, and the desired result is obtained by Lemma \ref{lemJalphaconti}.
\qed

\subsection{Proof of Lemma \ref{lemdiffvanish}}
Here we show the convergence to 0 of  $\Delta^{n,i}$ defined by
\begin{multline*}
  \Delta^{n,i}_t = U^{n,i}_t - \int_0^t K(t-s) \left(\nabla b^i(\hat{X}_{\frac{[ns]}{n}})^\top \U^n_s \mr{d}s  + \sum_{j=1}^m \nabla\sigma^i_j(\hat{X}_{\frac{[ns]}{n}})^\top \U^n_s \mr{d}W^j_s\right) \\
  +  \int_0^t K(t-s)n^H  \nabla b^i(\hat{X}_{\frac{[ns]}{n}})^\top (\hat{X}_s - \hat{X}_{\frac{[ns]}{n}}) \mr{d}s+ \sum_{j=1}^m \sum_{k=1}^d\int_0^t K(t-s) \partial_k \sigma^i_j(\hat{X}_{\frac{[ns]}{n}})  \mr{d}V^{n,k,j}_s.
\end{multline*}
By Taylor's theorem, one has the following identity for $\sigma$:
\begin{equation*}
  \sigma(x+h) - \sigma(x) - h \sigma^\prime(x) = h \int_0^1 (\sigma^\prime(x+rh)-\sigma^\prime(x)) \mr{d}r .
\end{equation*}
Using this identity, we can rewrite $\Delta^{n,i}$ as
\begin{align*}
  \Delta^{n,i}_t =& \sum_{k=1}^d \int_0^t K(t-s) n^{H} (X^k_s - \hat{X}^k_s) \left(\int_0^1 \{\partial_k b^i(\hat{X}_s+r(X_s - \hat{X}_s))-\partial_k b^i(\hat{X}_s)\} \mr{d}r\right)\mr{d}s \\
  &+\sum_{k=1}^d \int_0^t K(t-s) n^{H} (\hat{X}^k_s - \hat{X}^k_{\frac{[ns]}{n}}) \left(\int_0^1 \{\partial_k b^i( \hat{X}_{\frac{[ns]}{n}}+r(\hat{X}_s - \hat{X}_{\frac{[ns]}{n}}))-\partial_k b^i( \hat{X}_{\frac{[ns]}{n}})\} \mr{d}r\right)\mr{d}s\\
  &+\sum_{j=1}^m\sum_{k=1}^d\int_0^t K(t-s) n^{H} (X^k_s - \hat{X}^k_s) \left(\int_0^1 \{\partial_k \sigma^i_j(\hat{X}_s+r(X_s - \hat{X}_s))-\partial_k \sigma^i_j(\hat{X}_s)\} \mr{d}r\right)\mr{d}W^j_s \\
  &+\sum_{j=1}^m\sum_{k=1}^d\int_0^t K(t-s) n^{H} (\hat{X}^k_s - \hat{X}^k_{\frac{[ns]}{n}}) \left(\int_0^1 \{\partial_k \sigma^i_j( \hat{X}_{\frac{[ns]}{n}}+r(\hat{X}_s - \hat{X}_{\frac{[ns]}{n}}))-\partial_k \sigma^i_j( \hat{X}_{\frac{[ns]}{n}})\} \mr{d}r\right)\mr{d}W^j_s\\
  =\,& \sum_{k=1}^d \tilde{\Delta}^{n,i,k}_{1,t} (X,\hat{X}) +  \sum_{k=1}^d\tilde{\Delta}^{n,i,k}_{1,t} (\hat{X},\hat{X}_{\frac{[n\cdot]}{n}})  +\sum_{j=1}^m\sum_{k=1}^d \tilde{\Delta}^{n,i,j,k}_{2,t} (X,\hat{X}) +  \sum_{j=1}^m\sum_{k=1}^d\tilde{\Delta}^{n,i,j,k}_{2,t} (\hat{X},\hat{X}_{\frac{[n\cdot]}{n}}) ,
\end{align*}
where
\begin{gather*}
  \tilde{\Delta}^{n,i,k}_{1,t}(x,y) = \int_0^t K(t-s) n^{H} \psi^k_{b^i}(s,x,y) \mr{d}s, \quad \tilde{\Delta}^{n,i,j,k}_{2,t}(x,y) = \int_0^t K(t-s) n^{H} \psi^{k}_{\sigma^i_j}(s,x,y) \mr{d}W_s, \\
  \psi^k_a(s,x,y) = (x^k_s - y^k_s) \int_0^1  \{\partial_k a(y_s+r(x_s - y_s))-\partial_k a(y_s)\} \mr{d}r,\quad a \in \{b^i,\sigma^i_j\} ,
\end{gather*}
for any adapted processes $x, y$.  From Lemma \ref{lemcalcforlem}-(3),(4), it follows that for $t+h,t \in [0,T], h>0, p> 2\beta^\ast$ and any adapted processes $x, y$,
\begin{align*}
  \EXP{|\tilde{\Delta}^{n,i,k}_{1,t+h}(x,y) - \tilde{\Delta}^{n,i,k}_{1,t}(x,y) |^p}  &\leq 2^{p-1}\EXP{\left|\int_0^t (K(t+h-s)-K(t-s)) n^{H} \psi^k_{b^i}(s,x,y)\mr{d}s\right|^p}\\
  &\hspace{1em}+2^{p-1}\EXP{\left|\int_t^{t+h} K(t+h-s) n^{H}\psi^k_{b^i}(s,x,y) \mr{d}s\right|^p}\\
  &\leq   Ch^{Hp}n^{Hp} \sup_{s\in[0,T]}  \EXP{|\psi^k_{b^i}(s,x,y)|^p}
\end{align*}
and
\begin{align*}
  \EXP{|\tilde{\Delta}^{n,i,j,k}_{2,t+h}(x,y) - \tilde{\Delta}^{n,i,j,k}_{2,t}(x,y) |^p}  &\leq 2^{p-1}\EXP{\left|\int_0^t (K(t+h-s)-K(t-s)) n^{H} \psi^{k}_{\sigma^i_j}(s,x,y)  \mr{d}W^j_s\right|^p}\\
  &\hspace{1em}+2^{p-1}\EXP{\left|\int_t^{t+h} K(t+h-s) n^{H} \psi^{k}_{\sigma^i_j}(s,x,y)  \mr{d}W^j_s\right|^p}\\
  &\leq   Ch^{Hp}n^{Hp} \sup_{s\in[0,T]}  \EXP{|\psi^{k}_{\sigma^i_j}(s,x,y) |^p}.
\end{align*}
We here define the modulus of continuity of a continuous function $a$ as follows:
\begin{equation*}
  m(a,\delta) = \sup_{\substack{|u-v|\leq\delta\\ u,v\in[0,T]}} |a(u)-a(v)|.
\end{equation*}
Using this notation, by the Cauchy-Schwarz inequality, we have
\begin{align*}
  \EXP{|\psi^k_a(s,x,y)|^p} &= \EXP{|x^k_s-y^k_s|^p \left|\int_0^1 \{\partial_k a(y_s+r(x_s - y_s))-\partial_k a(y_s)\} \mr{d}r \right|^p}\\
  &\leq \EXP{|x^k_s-y^k_s|^{2p}}^{\frac{1}{2}} \EXP{\left|\int_0^1 m(\partial_k a,r|x_s - y_s|) \mr{d}r \right|^{2p}}^{\frac{1}{2}}\\
  &\leq \EXP{|x_s-y_s|^{2p}}^{\frac{1}{2}} \EXP{|m(\partial_k a,\|x -y\|_\infty)|^{2p}}^{\frac{1}{2}}.
\end{align*}
By the hypothesis, the derivatives of $b$ and $\sigma$ are all bounded, and therefore, if $\|x-y\|_\infty \to 0$ in probability as $n\to\infty$, it follows from the BCT and the property of the modulus of continuity that
\begin{equation*}
  \lim_{n\to\infty} \EXP{|m(\partial_k a,\|x -y\|_\infty)|^{2p}} = \EXP{\lim_{n\to\infty}|m(\partial_k a,\|x -y\|_\infty)|^{2p}}= 0
\end{equation*}
for $a = b^i$ and $a = \sigma^i_j$.

Substituting $(x,y)=(X,\hat{X})$ and $(x,y)=(\hat{X},
\hat{X}_{\frac{[n\cdot]}{n}})$ and using Lemmas \ref{lemdiffpeval} and \ref{lemhatXhd} respectively, we have
\begin{align*}
  &\EXP{|\Delta^{n,i}_{t+h}-\Delta^{n,i}_t|^p} \\
  \leq&\, C\sum_{k=1}^d \EXP{|\tilde{\Delta}^{n,i,k}_{1,t+h}(X,\hat{X}) - \tilde{\Delta}^{n,i,k}_{1,t}(X,\hat{X}) |^p}  + C\sum_{j=1}^m\sum_{k=1}^d\EXP{|\tilde{\Delta}^{n,i,j,k}_{2,t+h}(X,\hat{X}) - \tilde{\Delta}^{n,i,j,k}_{2,t}(X,\hat{X}) |^p} \\
  &+C\sum_{k=1}^d\EXP{|\tilde{\Delta}^{n,i,k}_{1,t+h}(\hat{X}, \hat{X}_{\frac{[n\cdot]}{n}}) - \tilde{\Delta}^{n,i,k}_{1,t}(\hat{X}, \hat{X}_{\frac{[n\cdot]}{n}}) |^p}  + C\sum_{j=1}^m\sum_{k=1}^d\EXP{|\tilde{\Delta}^{n,i,j,k}_{2,t+h}(\hat{X}, \hat{X}_{\frac{[n\cdot]}{n}}) - \tilde{\Delta}^{n,i,j,k}_{2,t}(\hat{X}, \hat{X}_{\frac{[n\cdot]}{n}}) |^p}\\
  \leq&\, Ch^{Hp} n^{Hp} \sup_{s\in[0,T]} \EXP{|X_s-\hat{X}_s|^{2p}}^\frac{1}{2}\sum_{k=1}^d \left(\EXP{|m(\partial_k b^i, |X_s-\hat{X}_s|)|^{2p}}^{\frac{1}{2}} +  \sum_{j=1}^m\EXP{|m(\partial_k \sigma^i_j, |X_s-\hat{X}_s|)|^{2p}}^{\frac{1}{2}} \right)\\
  &+\, Ch^{Hp} n^{Hp} \sup_{s\in[0,T]} \EXP{|\hat{X}_s-\hat{X}_{\frac{[ns]}{n}}|^{2p}}^\frac{1}{2}\sum_{k=1}^d \left(\EXP{|m(\partial_k b^i, |\hat{X}_s-\hat{X}_{\frac{[ns]}{n}}|)|^{2p}}^{\frac{1}{2}} +  \sum_{j=1}^m\EXP{|m(\partial_k \sigma^i_j, |\hat{X}_s-\hat{X}_{\frac{[ns]}{n}}|)|^{2p}}^{\frac{1}{2}}\right)\\
  \leq&\, Ch^{Hp}  \sum_{k=1}^d \left(\EXP{|m(\partial_k b^i, \|X-\hat{X}\|_{\infty})|^{2p}}^{\frac{1}{2}} +  \sum_{j=1}^m\EXP{|m(\partial_k \sigma^i_j, \|\hat{X}-\hat{X}_{\frac{[n\cdot]}{n}}\|_{\infty})|^{2p}}^{\frac{1}{2}}\right),
\end{align*}
where $C$ is independent of $n,t$ and $h$. Therefore, since $\Delta^{n,i}_0=0$ and both $\|X-\hat{X}\|_{\infty}$ and $\|\hat{X}-\hat{X}_{\frac{[n\cdot]}{n}}\|_{\infty}$ tend to zero in probability, the desired result is obtained by
\begin{equation*}
  \EXP{\| \Delta^{n,i} \|_{\mcl{C}^{\gamma}_0}^p } \leq C\sum_{k=1}^d \left(\EXP{|m(\partial_k b^i, \|X-\hat{X}\|_{\infty})|^{2p}}^{\frac{1}{2}} +  \sum_{j=1}^m\EXP{|m(\partial_k \sigma^i_j, \|\hat{X}-\hat{X}_{\frac{[n\cdot]}{n}}\|_{\infty})|^{2p}}^{\frac{1}{2}}\right) \rightarrow 0, \,^\forall \gamma\in(0,H),
\end{equation*}
which is derived from Kolmogorov's continuity theorem; see Revus and Yor (Theorem I.2.1 \cite{RevuzYor}).
\qed

\subsection{Proof of Lemma~\ref{lemchar}}
First we show that $\V^n$ is uniformly tight
in the sense of Definition 7.4 of Kurtz and Protter~\cite{Kurtz1996}.
Let
$\{Y^{n}\}_{n\in\Natu}$ be the set of simple predictable processes
almost surely bounded by one. Then for all $t\in[0,T]$, it follows from
the It\^o isometry,
\begin{equation*}
  \EXP{\left|\int_0^t Y^{n}_{s-} \mr{d}V^{n,k,j}_s\right|^2} = \EXP{\int_0^t |Y^{n}_{s-}|^2 \mr{d}\langle V^{n,k,j}\rangle_s} \leq \EXP{\langle V^{n,k,j}\rangle_t} < \infty,
\end{equation*}
where the bound is uniform in $n$ since $\{\langle V^{n,k,j}
\rangle_t\}_{n\in\Natu}$ is the convergent sequence in $L_1$ and,
therefore, bounded sequence in $L_1$ by Lemma \ref{lemvnvnvnwquadlim}.

Now, define $\mathbf{\Phi}^n = (\Phi^{n,1},\dots,\Phi^{n,d})$ by
  \begin{equation*}
    \Phi^{n,i}_t = \sum_{k=1}^d \int_0^t \partial_k b^i(\hat{X}_{\frac{[ns]}{n}})
     U^{n,k}_s \mr{d}s + \sum_{j=1}^m \sum_{k=1}^d \int_0^t \partial_k
     \sigma^i_j(\hat{X}_{\frac{[ns]}{n}})U^{n,k}_s \mr{d}W^j_s +
\sum_{j=1}^m \sum_{k=1}^d\int_0^\cdot \partial_k
\sigma^i_j(\hat{X}_{\frac{[ns]}{n}})  \mr{d}V^{n,k,j}_s.
  \end{equation*}
By the uniform tightness of $\V^n$, Theorem~7.10 of Kurtz and
Protter~\cite{Kurtz1996} implies 
$(\U^n,\mathbf{\Phi}^n) \to (\U,\mathbf{\Phi})$ in law, where
$\mathbf{\Phi} = (\Phi^1,\dots,\Phi^d)$ are defined by
\begin{equation*}
\Phi^i=
\sum_{k=1}^d \int_0^t \partial_k b^i(X_s)
     U^{k}_s \mr{d}s + \sum_{j=1}^m \sum_{k=1}^d \int_0^t \partial_k
     \sigma^i_j(X_s)U^{k}_s \mr{d}W^j_s +
\sum_{j=1}^m \sum_{k=1}^d\int_0^\cdot \partial_k
\sigma^i_j(X_s)  \mr{d}V^{k,j}_s.
\end{equation*}
We can also show $\mathbf{\Phi}^n$ is tight as a
$\mathcal{C}^{1/2-\epsilon}_0$-valued sequence for any
$\epsilon \in (0,1/2)$. Indeed, denoting by 
\begin{equation}\label{tildeVn}
 \tilde{V}^{n,i}
= \sum_{j=1}^m \sum_{k=1}^d\int_0^\cdot \partial_k
\sigma^i_j(\hat{X}_{\frac{[ns]}{n}})  \mr{d}V^{n,k,j}_s,
\end{equation}
we have for any $p >2$ and $0\leq s<t \leq T$,
  \begin{equation}\label{eqtildevnhd0}
    \begin{split}
      \EXP{\left|\tilde{V}^{n,i}_t-\tilde{V}^{n,i}_s \right|^p} &\leq C_{p,d,m}\sum_{j=1}^m \sum_{k=1}^d\EXP{\left|\int_s^t  \partial_k \sigma^i_j(\hat{X}_\frac{[nu]}{n}) n^{H} (\hat{X}^k_u -\hat{X}^k_\frac{[nu]}{n})\mr{d}W^j_u\right|^p} \\
      &\leq C \sum_{j=1}^m \sum_{k=1}^d\EXP{\left(\int_s^t | \partial_k \sigma^i_j(\hat{X}_\frac{[nu]}{n})|^2 n^{2H} |\hat{X}^k_u -\hat{X}^k_\frac{[nu]}{n}|^2\mr{d}u\right)^\frac{p}{2}} \\
      &\leq C\sum_{j=1}^m \sum_{k=1}^d \left(\int_s^t  n^{2H} \EXP{|\partial_k \sigma^i_j((\hat{X}_\frac{[nu]}{n})|^p |\hat{X}^k_u -\hat{X}^k_\frac{[nu]}{n}|^p}^\frac{2}{p}\mr{d}u\right)^\frac{p}{2} \\
      &\leq C\sum_{j=1}^m \sum_{k=1}^d \left(\int_s^t  n^{2H} (u-\tfrac{[nu]}{n})^{2H}  \mr{d}u\right)^\frac{p}{2} \\
      &\leq C |t-s|^{\frac{p}{2}}.
    \end{split}
  \end{equation}
and so, for $t+h,t\in[0,T], h>0$,
  \begin{align*}
    \left\|\Phi^{n,i}_{t+h} -  \Phi^{n,i}_t\right\|_{L_p} &\leq
   \sum_{k=1}^d  \left\| \int_t^{t+h}  \partial_k b^i(\hat{X}_{\frac{[ns]}{n}})
   U^{n,k}_s\mr{d}s\right\|_{L_p} +\sum_{j,k} \left\| \int_t^{t+h}
   \partial_k \sigma^i_j(\hat{X}_{\frac{[ns]}{n}})U^{n,k}_s \mr{d}W^j_s
   \right\|_{L_p} + \|\tilde{V}^{n,i}_{t+h}-\tilde{V}^{n,i}_t\|_{L_p}\\
    &\leq \sum_{k=1}^d \int_t^{t+h}  \|  \partial_k b^i(\hat{X}_{\frac{[ns]}{n}})
   U^{n,k}_s\|_{L_p} \mr{d}s + C_1\sum_{j,k}\left(\int_t^{t+h}
   \|\partial_k \sigma^i_j(\hat{X}_{\frac{[ns]}{n}})U^{n,k}_s\|^2_{L_p} \mr{d}s
   \right)^\frac{1}{2} + C_2h^{1/2}\\
    &\leq Ch^{1/2}
  \end{align*}
by the BDG inequality, Minkowski's integral inequality,
Lemmas~\ref{lemhatXhd} and \ref{lemdiffpeval}, and the boundedness of
the derivatives of
$\sigma$.
Hence, Kolmogorov's continuity
theorem yields
  $\EXP{\|\Phi^{n,i}\|_{\mcl{C}^{1/2-\epsilon}_0}} \leq C$
  uniformly in $n$, and therefore, by Corollary \ref{corvnconvhld}, 
 $\mathbf{\Phi}^{n}$ converges in law in
  $\mcl{C}^{1/2-\epsilon}_0$ to $\mathbf{\Phi}$.

Let
  $\alpha = \frac{1}{2}-H$ and $\gamma = \frac{1}{2}-\epsilon$. By Lemma
  \ref{lemJalphaconti}, the operator $\mcl{J}^\alpha$ is continuous from
  $\mcl{C}^{\gamma}_0$ into $\mcl{C}^{\gamma-\alpha}_0$. Since $(\U^n, \mathbf{\Phi}^{n})$ 
  converges in law to $(\U,\mathbf{\Phi})$ in $\mcl{C}^{\gamma-\alpha}_0 \times
  \mcl{C}^{\gamma}_0$, the continuous mapping theorem implies that 
  $\U^n - \mcl{J}^\alpha \mathbf{\Phi}^n$ converges in law to
  $\U- \mcl{J}^\alpha \mathbf{\Phi}$ in
  $\mcl{C}^{\gamma-\alpha}_0=\mcl{C}^{H-\epsilon}_0$.
On the other hand, Lemmas~\ref{lemsecondtermzero} and
  \ref{lemdiffvanish} imply 
 $\U^n - \mcl{J}^\alpha \mathbf{\Phi}^n$ converges in law to zero.
Consequently $\U- \mcl{J}^\alpha \mathbf{\Phi} = 0$, which is equivalent to
  \eqref{eqlimitSVE}. 
\qed

\subsection{Proof of Lemma \ref{lemtightUn}}
Set $\tilde{U}^{n,i}$ as
\begin{equation*}
  \tilde{U}^{n,i}_t = \int_0^t K(t-s) \nabla b^i(\hat{X}_{\frac{[ns]}{n}})^\top U^{n,i}_s\mr{d}s + \sum_{k=1}^d\sum_{j=1}^m\int_0^t K(t-s) U^{n,i}_s\partial_k \sigma^i_j(\hat{X}_{\frac{[ns]}{n}}) \mr{d}W^j_s.
\end{equation*}
We first show the tightness of $\{\tilde{U}^{n,i}\}$. 
By Theorem~\ref{thmvntight}, it suffices to show that there exists a
uniform bound for
$\mathbb{E}[\|\tilde{U}^{n,i}_t\|_{\mcl{C}^{H-\epsilon^\prime}_0}]$ for
$\epsilon^\prime \in (0,\epsilon)$. To show the \hd continuity of
$\tilde{U}^{n,i}$, we start with decomposing the amount of its change
from $t$ to $t+h$ for some $h>0$ and $p\geq 2$ as the following:
\begin{equation}\label{equnhld}
  \begin{split}
    |\tilde{U}^{n,i}_{t+h} -\tilde{U}^{n,i}_t|^p 
    &\leq 4^{p-1}\Biggl( \left|\int_0^t (K(t+h-s)-K(t-s)) \nabla
   b^i(\hat{X}_{\frac{[ns]}{n}})^\top  U^{n,i}_s \mr{d}s\right|^p \\
    &\hspace{2em} + \left|\int_t^{t+h}
   K(t+h-s)  \nabla b^i(\hat{X}_{\frac{[ns]}{n}})^\top  U^{n,i}_s
   \mr{d}s\right|^p\\
    &\hspace{2em}+ \left|\sum_{k=1}^d\sum_{j=1}^m\int_0^t
   (K(t+h-s)-K(t-s)) \partial_k \sigma^i_j(\hat{X}_{\frac{[ns]}{n}})
   U^{n,i}_s\mr{d}W^j_s\right|^p  \\
    &\hspace{2em} +
   \left|\sum_{k=1}^d\sum_{j=1}^m\int_t^{t+h} K(t+h-s) \partial_k
   \sigma^i_j(\hat{X}_{\frac{[ns]}{n}})U^{n,i}_s\mr{d}W^j_s\right|^p\Biggr).
  \end{split}
\end{equation}
Then, by Lemma \ref{lemcalcforlem}-(3),(4), we have
\begin{equation*}
  \EXP{ |\tilde{U}^{n,i}_{t+h} -\tilde{U}^{n,i}_t|^p} \leq C h^{Hp}  \sup_{t\in[0,T]} \EXP{|U^{n,i}_t|^p} = C h^{Hp}  \sup_{t\in[0,T]} \EXP{n^{Hp} |X_t-\hat{X}_t|^p} \leq Ch^{Hp},
\end{equation*}
where $C$ is independent of $t$ and $n$, and hence, Kolmogorov's continuity theorem and Theorem \ref{thmvntight} leads to the tightness in $\mcl{C}^{H-\epsilon}_0$ of $\{\tilde{U}^{n,i}\}$.

In light of Lemmas~\ref{lemsecondtermzero} and
  \ref{lemdiffvanish},
it is only remained to show that $\{\int_0^\cdot K(\cdot-s)
\mr{d}\tilde{V}^{n,i}_s\}_{n\in\Natu}$ is tight, where $\tilde{V}^{n,i}$
is defined by (\ref{tildeVn}). Similarly to the above, for
any $t+h,t \in [0,T], h>0, p >2$,
\begin{align*}
  &\EXP{\left|\int_0^{t+h} K(t+h-s) \mr{d}\tilde{V}^{n,i}_s - \int_0^t K(t-s) \mr{d}\tilde{V}^{n,i}_s \right|^p} \\
  &\leq 2^{p-1}\EXP{\left|\int_0^t (K(t+h-s)-K(t-s))  \mr{d}\tilde{V}^{n,i}_s \right|^p + \left|\int_t^{t+h} K(t+h-s)  \mr{d}\tilde{V}^{n,i}_s \right|^p}\\
  &\leq 2^{p-1}\EXP{\left|\int_0^t (K(t+h-s)-K(t-s))   \sum_{j,k}
 \partial_k \sigma^i_j(\hat{X}_{\frac{[ns]}{n}})  n^H (\hat{X}^k_s -
 \hat{X}^k_{\frac{[ns]}{n}}) \mr{d}W^j_s \right|^p}\\
  &\hspace{1em} + 2^{p-1}\EXP{\left|\int_t^{t+h} K(t+h-s) \sum_{j,k} \partial_k \sigma^i_j(\hat{X}_{\frac{[ns]}{n}})  n^H (\hat{X}^k_s - \hat{X}^k_{\frac{[ns]}{n}}) \mr{d}W^j_s \right|^p}\\
  &\leq Ch^{Hp}n^{Hp} \sum_{k=1}^d \sup_{r\in[0,T]} \EXP{ |\hat{X}^k_r-\hat{X}^k_\frac{[nr]}{n}|^p} \leq Ch^{Hp}
\end{align*}
follows from Lemmas \ref{lemcalcforlem}-(4), \ref{lemhatXhd}, and the boundedness for the derivative of $\sigma$. Then, by Kolmogorov's continuity theorem, together with Theorem \ref{thmvntight}, we obtain the tightness in $\mcl{C}^{H-\epsilon}_0$ of $\{\int_0^\cdot K(\cdot-s) \mr{d}\tilde{V}^{n,i}_s\}$. From these arguments, the tightness in $\mcl{C}^{H-\epsilon}_0$ of $\{U^{n,i}\}$ is verified.
\qed

\subsection{Proof of Lemma \ref{lemsolUeval}}
We first show the $L_p$ integrability with the similar way to  the proof of Lemma \ref{lemhatXpeval}. The SVE (\ref{eqlimitSVE}) is transformed as
\begin{equation*}
  U^i_t =  \int_0^t K(t-s) \nabla b^i(X_s)^\top U_s \mr{d}s + \sum_{j=1}^m\sum_{k=1}^d \sum_{l=1}^m \int_0^t K(t-s) \partial_k \sigma^i_j(X_s) \{U_s^k\mr{d}W^j_s + C_H\sigma^k_l(X_s)\mr{d}B^{l,j}_s\},
\end{equation*}
where $C_H=\frac{1}{\sqrt{\Gamma(2H+2)\sin \pi H}}$.
Let $\tau_m = \inf\{t\mid |U_t|\geq m \}$. Using the local property, Lemmas \ref{lemcalcforlem}-(1),(2), \ref{lemXpeval} and the boundedness of the derivative of $\sigma$, we observe that
\begin{align*}
  \EXP{|U_t|^p 1_{\{t<\tau_m\}}} &\leq 2^{p-1} \EXP{\left|\int_0^t K(t-s) \nabla b^i(X_s)^\top U_s \mr{d}s\right|^p}\\
  &\hspace{1em} + C_{p,m,d}\sum_{j,k,l} \EXP{\left|\int_0^t K(t-s) \partial_k \sigma^i_j(X_s) \{U_s^k\mr{d}W^j_s + C_H\sigma^k_l(X_s)\mr{d}B^{l,j}_s\}\right|^p}\\
  &\leq C_1 + C_2 \int_0^t \sum_{j,k,l} \EXP{(|\nabla b^i(X_s)|^p + |\partial_k \sigma^i_j(X_s)|^p) |U_s|^p1_{\{t<\tau_m\}} +  |\partial_k \sigma^i_j(X_s)C_H\sigma^k_l(X_s)|^p|} \mr{d}s \\
  &\leq C_1 + C_2 \int_0^t \EXP{ |U_s|^p1_{\{t<\tau_m\}} } \mr{d}s.
\end{align*}
for some $C_1, C_2 >0$, independent of $t$. Therefore, Gronwall's lemma yields
\begin{equation*}
  \EXP{|U_t|^p 1_{\{t<\tau_m\}}} \leq C_1 e^{C_2t} \leq C_1 e^{C_2T} ,
\end{equation*}
and thus, by Fatou's lemma, we see
\begin{equation*}
  \EXP{|U_t|^p} = \EXP{\liminf_{m \to \infty} |U_t|^p 1_{\{t<\tau_m\}}} \leq \liminf_{m \to \infty} \EXP{|U_t|^p 1_{\{t<\tau_m\}}}  \leq C_1 e^{C_2T},
\end{equation*}
that is, $U_t$ is in $L_p$ for any $t\in[0,T]$.

Next we discuss the continuity. It is sufficient to verify the continuity of the stochastic integral part. Similarly to (\ref{equnhld}), we see for any $t, t+h\in[0,T], h>0$,
\begin{equation*}
  \EXP{|U_{t+h} - U_t|^p} \leq Ch^{Hp} \EXP{ |U_s|^p +\sum_{k,l}|\partial_k \sigma^i_j(X_s)C_H\sigma^k_l(X_s)|^p} +  \leq Ch^{Hp} .
\end{equation*}
Then, Kolmogorov's continuity theorem ensure the continuity of $U$.

Finally, we show the uniqueness in law of the solution. Assume there exist two continuous solutions $\nu^{1}, \nu^{2}$ satisfying (\ref{eqlimitSVE}). Then
\begin{equation*}
  \nu^{1}_t - \nu^{2}_t =  \int_0^t K(t-s) \sigma^\prime(X_s) (\nu^{1}_s -\nu^{2}_s) \mr{d}W_s
\end{equation*}
follows, and we will evaluate this difference in $L_p, p>2\beta^\ast$. Write $\tilde{\nu} = \nu^1 - \nu^2$.
By Lemma \ref{lemcalcforlem}-(1) and the boundedness of $\sigma^\prime$, we have
\begin{align*}
  \EXP{|\tilde{\nu}_t|^{p} } &\leq C \int_0^t \EXP{ |\sigma^\prime(X_s)|^p |\tilde{\nu}_s|^p } \mr{d}s \\
  &\leq C \int_0^t \EXP{|\tilde{\nu}_s|^p} \mr{d}s.
\end{align*}
Since $\tilde{\nu}$ is continuous, by Gronwall's lemma, we see
\begin{equation*}
  \EXP{|\tilde{\nu}_t|^2} = 0,
\end{equation*}
which implies that for all $t$, $\nu^{1}_t = \nu^{2}_t$ almost surely. Taking the continuities of $\nu^{1}$ and $\nu^{2}$ into account, we can verify these processes are indistinguishable. As a consequence, the uniqueness in law is derived from the Yamada-Watanabe argument.
\qed

\appendix

\section{Stochastic integrals as the fractional derivatives}
We introduce the representation of stochastic integral as the fractional
derivatives; see also Horvath et al.~\cite{Horvath}. We let $\lambda \in (0,1)$ and $K(t)=C_K t^{-\alpha}, \alpha \in (0,\lambda)$ in this section.

\begin{definition}
  Let $f \in \mcl{C}^\lambda_0$. We define the integral operator $\mcl{J}^\alpha$ as
  \begin{equation*}
    (\mcl{J}^\alpha f)(t) := K(t) f(t) - \int_0^t  K^\prime(t-s)(f(t) - f(s) ) \mr{d}s,
  \end{equation*}
  where $K^\prime = \mr{d} K/ \mr{d}t$.
\end{definition}

\begin{proposition}\label{propregfracderiv}
  Let $Y$ be a continuous semimartingale taking values in $\mcl{C}^\lambda_0$ such that the stochastic integral $\int_0^t K(t-s) \mr{d}Y_s$ is well defined for all $t\in[0,T]$. Then the integral is almost surely represented by $\mcl{J}^\alpha$ as
  \begin{equation*}
    \int_0^t  K(t-s) \mr{d}Y_s = (\mcl{J}^\alpha Y) (t) 
  \end{equation*}
for all $ t\in[0,T]$.
\end{proposition}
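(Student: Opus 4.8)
The plan is to establish the identity for each fixed $t$ by cutting off the singularity of $K$ at the upper endpoint, applying classical integration by parts on $[0,t-\epsilon]$ (where the kernel is smooth and of finite variation), and letting $\epsilon\downarrow 0$. First I would record that $\mcl{J}^\alpha Y$ is almost surely well defined: since $Y$ takes values in $\mcl{C}^\lambda_0$ we have $|Y_t-Y_s|\le\|Y\|_{\mcl{C}^\lambda_0}|t-s|^\lambda$ and $|K'(t-s)|=\alpha C_K(t-s)^{-\alpha-1}$, so the integrand in the definition of $\mcl{J}^\alpha$ is dominated by $C(t-s)^{\lambda-\alpha-1}$, which is integrable on $[0,t]$ precisely because $\alpha<\lambda$.

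Next, for $\epsilon\in(0,t)$ the map $s\mapsto K(t-s)$ is $C^1$ and monotone, hence of finite variation, on $[0,t-\epsilon]$, and it has vanishing quadratic covariation with the continuous semimartingale $Y$. The integration-by-parts formula, together with $Y_0=0$ and $\mr{d}K(t-s)=-K'(t-s)\mr{d}s$, then gives
\begin{equation*}
  \int_0^{t-\epsilon}K(t-s)\,\mr{d}Y_s = K(\epsilon)Y_{t-\epsilon}+\int_0^{t-\epsilon}K'(t-s)Y_s\,\mr{d}s.
\end{equation*}
On the other hand, the substitution $u=t-s$ yields $\int_0^{t-\epsilon}K'(t-s)\,\mr{d}s=K(t)-K(\epsilon)$, so the truncated version of the definition rearranges to
\begin{equation*}
  K(t)Y_t-\int_0^{t-\epsilon}K'(t-s)(Y_t-Y_s)\,\mr{d}s = K(\epsilon)Y_t+\int_0^{t-\epsilon}K'(t-s)Y_s\,\mr{d}s.
\end{equation*}
Subtracting the two displays, the common integral $\int_0^{t-\epsilon}K'(t-s)Y_s\,\mr{d}s$ cancels, and the discrepancy between the truncated $\mcl{J}^\alpha Y$ and the truncated stochastic integral is exactly $K(\epsilon)(Y_t-Y_{t-\epsilon})$.

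Finally I would let $\epsilon\downarrow 0$. The boundary term satisfies $|K(\epsilon)(Y_t-Y_{t-\epsilon})|\le C_K\|Y\|_{\mcl{C}^\lambda_0}\,\epsilon^{\lambda-\alpha}\to 0$ almost surely, and the deterministic integral converges to $\int_0^t K'(t-s)(Y_t-Y_s)\,\mr{d}s$ by dominated convergence with the dominating function identified above. The one genuinely delicate point, which I expect to be the main obstacle, is the convergence of the truncated stochastic integral $\int_0^{t-\epsilon}K(t-s)\,\mr{d}Y_s=\int_0^t\mathbf{1}_{[0,t-\epsilon]}(s)K(t-s)\,\mr{d}Y_s$ to the full one, since the integrand is singular at the upper endpoint. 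Here I would invoke the hypothesis that $\int_0^t K(t-s)\,\mr{d}Y_s$ is well defined, so that $K(t-\cdot)$ is an admissible integrand; since $\mathbf{1}_{[0,t-\epsilon]}K(t-\cdot)\to K(t-\cdot)$ dominated by $K(t-\cdot)$ in the relevant space, the dominated-convergence (continuity) theorem for stochastic integrals yields convergence in probability. Passing to a sequence $\epsilon_n\downarrow 0$ along which this convergence is almost sure, I obtain the identity almost surely for the fixed $t$. Since $\mcl{J}^\alpha Y$ is almost surely continuous in $t$, it provides a continuous modification of $t\mapsto\int_0^t K(t-s)\,\mr{d}Y_s$ coinciding with $\mcl{J}^\alpha Y$, whence the stated identity holds for all $t\in[0,T]$ almost surely.
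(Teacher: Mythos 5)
Your proposal is correct and follows essentially the same route as the paper: truncate at $t-\epsilon$, integrate by parts on $[0,t-\epsilon]$ where $K(t-\cdot)$ is of finite variation, identify the boundary discrepancy $K(\epsilon)(Y_t-Y_{t-\epsilon})$, kill it with the $\lambda$-\hd bound since $\alpha<\lambda$, and pass to the limit in the deterministic integral by dominated convergence. You are in fact more careful than the paper on the convergence $\int_0^{t-\epsilon}K(t-s)\,\mr{d}Y_s\to\int_0^{t}K(t-s)\,\mr{d}Y_s$ (which the paper dismisses as obvious) and on upgrading the fixed-$t$ identity to all $t$ simultaneously.
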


\begin{proof}
  Let $\epsilon>0$ be a sufficiently small number. It\^ o's integration by parts yields
  \begin{equation}\label{eqvkep1}
    \begin{split}
      \int_0^{t-\epsilon}  K^\prime(t-s)(Y_t - Y_s) \mr{d}s &= \left(\int_0^{t-\epsilon} K^\prime(t-s) \mr{d}s\right)Y_t  - \int_0^{t-\epsilon} K^\prime(t-s)Y_s  \mr{d}s \\
      &= \left( \int_{\epsilon}^{t} K^\prime(s) \mr{d}s\right)Y_t + [K(t-s)Y_s]_{s=0}^{s=t-\epsilon} - \int_0^{t-\epsilon} K(t-s) \mr{d}Y_s \\
      &= (K(t) -K(\epsilon)) Y_t  -  K(\epsilon)Y_{t-\epsilon} - \int_0^{t-\epsilon} K(t-s) \mr{d}Y_s \\
      &= K(t)Y_t - \int_0^{t-\epsilon} K(t-s) \mr{d}Y_s + K(\epsilon)(Y_{t-\epsilon}-Y_t) .
    \end{split}
  \end{equation}
  Since $V$ satisfies the \hd condition of an order $\lambda>\alpha$, the last term vanishes as $\epsilon\to 0$. Indeed, letting $A(\omega)$ be the \hd constant of $Y(\omega)$, we see
  \begin{equation*}
    |K(\epsilon)(Y_{t-\epsilon}(\omega)-Y_t(\omega))| \leq  \epsilon^{-\alpha} C_KA(\omega) \epsilon^{\lambda} =C_K A(\omega)\epsilon^{\lambda-\alpha},
  \end{equation*}
  which vanishes as $\epsilon$ tends to zero uniformly in $t$. The second term of on the right-hand side of (\ref{eqvkep1}) obviously tends to the original stochastic integral. On the other hand, the integral on the left-hand side of (\ref{eqvkep1}) converges to the integration on $[0,t]$ by the DCT since
  \begin{equation*}
    K^\prime(t-s) |Y_t(\omega) - Y_s(\omega)|  \leq C_K A(\omega) |t-s|^{\lambda-\alpha-1}
  \end{equation*}
  for all $s\in[0,t]$ and
  \begin{equation*}
    \int_0^t C_K A(\omega) |t-s|^{\lambda-\alpha-1} \mr{d}s \leq \frac{C_K}{\lambda-\alpha} A(\omega) t^{\lambda-\alpha}.
  \end{equation*}
  Therefore, taking the limit of both sides of (\ref{eqvkep1}) with $\epsilon \rightarrow 0$, we have
  \begin{equation}
    \int_0^{t} K(t-s) \mr{d}Y_s  = K(t) Y_t - \int_0^t K^\prime(t-s) (Y_t - Y_s ) \mr{d}s = (\mcl{J}^\alpha Y)(t),
  \end{equation}
  which is the desired result.
\end{proof}

The operator $\mcl{J}^\alpha$ is one of the fractional derivative
operators, called \textit{the Marchaud fractional derivative}, which coincides with the Riemann-Liouville fractional derivative on $\mcl{C}^{\lambda}_0$.

\begin{lemma}[Samko et al.\cite{RLSamko}] \label{lemJalphaconti}
  The operator $\mcl{J}^\alpha$ is bounded (continuous) from $\mcl{C}^\lambda_0$ into $\mcl{C}^{\lambda-\alpha}_0$.
\end{lemma}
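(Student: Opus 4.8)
Since $\mcl{J}^\alpha$ is linear and acts coordinatewise, it suffices to treat a scalar $f$ and to exhibit an operator-norm bound; boundedness then gives continuity. The plan is to work with the seminorm $[f]_\lambda := \sup_{0\le s<t\le T} |f(t)-f(s)|/|t-s|^\lambda$, which is equivalent to $\|\cdot\|_{\mcl{C}^\lambda_0}$ because $f(0)=0$ forces $\|f\|_\infty \le [f]_\lambda T^\lambda$; it is therefore enough to prove $(\mcl{J}^\alpha f)(0)=0$ and $[\mcl{J}^\alpha f]_{\lambda-\alpha}\le C[f]_\lambda$. First I would substitute $u=t-s$ in the defining integral and use $K'(u)=-\alpha C_K u^{-\alpha-1}$ to rewrite the operator in Marchaud form,
\[
  (\mcl{J}^\alpha f)(t) = C_K t^{-\alpha} f(t) + \alpha C_K \int_0^t u^{-\alpha-1}\bigl(f(t)-f(t-u)\bigr)\,\mr{d}u .
\]
Because $|f(t)-f(t-u)|\le [f]_\lambda u^\lambda$, the integrand is dominated by $[f]_\lambda u^{\lambda-\alpha-1}$, which is integrable near $0$ thanks to $\lambda-\alpha>0$; this already yields the pointwise bound $|(\mcl{J}^\alpha f)(t)|\le C[f]_\lambda t^{\lambda-\alpha}$, and in particular $(\mcl{J}^\alpha f)(0)=0$.

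The core is the \hd difference estimate. Fix $0\le t'<t\le T$, set $h=t-t'$, and abbreviate $\phi(t):=\int_0^t u^{-\alpha-1}(f(t)-f(t-u))\,\mr{d}u$. I would split $\phi(t)=\int_0^h+\int_h^t$: the near-diagonal piece $\int_0^h$ is bounded by $[f]_\lambda\int_0^h u^{\lambda-\alpha-1}\,\mr{d}u = C[f]_\lambda h^{\lambda-\alpha}$ directly. In $\int_h^t$ I would substitute $u=v+h$ so that its range becomes $(0,t')$, matching that of $\phi(t')$, and then decompose $f(t)-f(t'-v)=(f(t)-f(t'))+(f(t')-f(t'-v))$. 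This produces a displacement term with kernel $(v+h)^{-\alpha-1}$ multiplying $f(t)-f(t')$, controlled by $|f(t)-f(t')|\le[f]_\lambda h^\lambda$ together with $\int_0^{t'}(v+h)^{-\alpha-1}\,\mr{d}v\le h^{-\alpha}/\alpha$, and a kernel-difference term $[(v+h)^{-\alpha-1}-v^{-\alpha-1}](f(t')-f(t'-v))$, controlled by $|f(t')-f(t'-v)|\le[f]_\lambda v^\lambda$. For the latter I would use the crude bound $v^{-\alpha-1}-(v+h)^{-\alpha-1}\le v^{-\alpha-1}$ on $(0,h)$ and the mean-value estimate $v^{-\alpha-1}-(v+h)^{-\alpha-1}\le(\alpha+1)h\,v^{-\alpha-2}$ on $(h,t')$. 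Finally the boundary term $C_K(t^{-\alpha}f(t)-(t')^{-\alpha}f(t'))$ is treated by writing it as $t^{-\alpha}(f(t)-f(t'))+(t^{-\alpha}-(t')^{-\alpha})f(t')$, using $t\ge h$ so that $t^{-\alpha}\le h^{-\alpha}$, and splitting into the cases $t'\le h$ and $t'>h$ for the second piece (the latter again via the mean-value theorem). Summing all contributions gives $|(\mcl{J}^\alpha f)(t)-(\mcl{J}^\alpha f)(t')|\le C[f]_\lambda h^{\lambda-\alpha}$, which is the claim.

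The hard part will be the bookkeeping of the singular integrals rather than any single estimate: every bound rests on the exponent arithmetic $\lambda-\alpha\in(0,1)$, which makes $v^{\lambda-\alpha-1}$ integrable near $0$ and $v^{\lambda-\alpha-2}$ integrable near $+\infty$, and these are exactly the two regimes produced by cutting each integral at the scale $v=h$. Keeping the constants uniform in $t,t'$ and checking that no regime produces a power of $h$ worse than $h^{\lambda-\alpha}$ is where the care is needed. Since the statement is classical, an alternative is simply to invoke the corresponding result of Samko et al.~\cite{RLSamko} on the Marchaud/Riemann--Liouville derivative between \hd spaces; I would nonetheless favour including the self-contained computation above, as it is short and keeps the paper self-reliant.
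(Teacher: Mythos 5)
The paper does not actually prove this lemma: it is stated with an attribution to Samko et al.\ \cite{RLSamko} and used as a black box, so there is no internal argument to compare yours against. Your self-contained proof is correct. The reduction to the seminorm $[f]_\lambda$ is legitimate because $f(0)=0$ gives $\|f\|_\infty\le T^\lambda[f]_\lambda$, the Marchaud rewriting $(\mcl{J}^\alpha f)(t)=C_Kt^{-\alpha}f(t)+\alpha C_K\int_0^t u^{-\alpha-1}(f(t)-f(t-u))\,\mr{d}u$ follows from the substitution $u=t-s$, and the pointwise bound $|(\mcl{J}^\alpha f)(t)|\le C[f]_\lambda t^{\lambda-\alpha}$ settles both well-definedness and the vanishing at $t=0$ (by continuous extension, since the defining formula is formally $\infty\cdot 0$ there). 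I checked the exponent arithmetic in each of your regimes: the near-diagonal piece gives $h^{\lambda-\alpha}/(\lambda-\alpha)$; the displacement term gives $[f]_\lambda h^\lambda\cdot h^{-\alpha}/\alpha$; the kernel-difference term gives $h^{\lambda-\alpha}/(\lambda-\alpha)$ on $(0,h)$ and $(\alpha+1)h\int_h^\infty v^{\lambda-\alpha-2}\,\mr{d}v=(\alpha+1)h^{\lambda-\alpha}/(1-\lambda+\alpha)$ on $(h,t')$, both finite precisely because $0<\lambda-\alpha<1$; and the boundary term closes with $t^{-\alpha}\le h^{-\alpha}$ and the two cases $t'\le h$, $t'>h$. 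All constants depend only on $\alpha$, $\lambda$ and $T$, so you do get an operator-norm bound and hence continuity. What your route buys over the paper's citation is self-containedness and an explicit constant; what it costs is length, and since the result is genuinely classical (it is the boundedness of the Marchaud/Riemann--Liouville derivative between H\"older spaces), the citation is a defensible economy. If you do include the computation, state explicitly that the two regimes of each split are cut at the scale $v=h$ and that the case $t'\le h$ makes the $(h,t')$ integrals empty, so no case is silently dropped.
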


\section{Tightness criterion for a space of \hd continuous processes}\label{sectight}
We will discuss the tightness of \hd continuous processes. We first introduce a compact set in a space of \hd continuous functions.

\begin{lemma}\label{lemhldcompact}
  A set of $\alpha$-\hd continuous functions
  \begin{equation*}
    K_\delta = \left\{f\in \mcl{C}^{\alpha}_0 \,\middle|\, \|f\|_{\mcl{C}^{\alpha}_0} \leq \delta\right\}
  \end{equation*}
  is compact in $\mcl{C}^{\beta}_0$ for any $\delta>0$ and $\beta\in(0,\alpha)$.
\end{lemma}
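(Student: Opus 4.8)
The plan is to prove sequential compactness, which is equivalent to compactness since $\mcl{C}^\beta_0$ is a metric space. First I would take an arbitrary sequence $(f_n)\subset K_\delta$ and extract a uniformly convergent subsequence. Writing $[h]_\lambda := \sup_{s\ne t}|h(t)-h(s)|/|t-s|^\lambda$ for the $\lambda$-H\"older seminorm, the bound $\|f_n\|_{\mcl{C}^\alpha_0}\le\delta$ gives $[f_n]_\alpha\le\delta$, hence $|f_n(t)-f_n(s)|\le\delta|t-s|^\alpha$ (equicontinuity) and $|f_n(t)|=|f_n(t)-f_n(0)|\le\delta T^\alpha$ (uniform boundedness, using $f_n(0)=0$). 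The Arzel\`a--Ascoli theorem then supplies a subsequence $(f_{n_k})$ converging uniformly to some $f\in\mcl{C}_0$.

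Next I would verify that the uniform limit $f$ lies in $K_\delta$, which simultaneously shows $K_\delta$ is closed. This uses lower semicontinuity of the H\"older seminorm under pointwise convergence: for each fixed pair $s\ne t$ one has $|f(t)-f(s)|/|t-s|^\alpha=\lim_k|f_{n_k}(t)-f_{n_k}(s)|/|t-s|^\alpha\le\delta$, so $[f]_\alpha\le\delta$, and $f(0)=\lim_k f_{n_k}(0)=0$. Thus $\|f\|_{\mcl{C}^\alpha_0}\le\delta$ and $f\in K_\delta$.

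The crux is to upgrade the uniform convergence $f_{n_k}\to f$ to convergence in the stronger norm $\|\cdot\|_{\mcl{C}^\beta_0}$. Setting $g_k=f_{n_k}-f$, I note $g_k\to 0$ uniformly while $[g_k]_\alpha\le[f_{n_k}]_\alpha+[f]_\alpha\le 2\delta$. The interpolation estimate splits the difference quotient by the separation $|t-s|$: for $|t-s|\le\eta$ one writes $|g_k(t)-g_k(s)|/|t-s|^\beta=\big(|g_k(t)-g_k(s)|/|t-s|^\alpha\big)\,|t-s|^{\alpha-\beta}\le 2\delta\,\eta^{\alpha-\beta}$, whereas for $|t-s|>\eta$ one bounds it by $2\|g_k\|_\infty\,\eta^{-\beta}$. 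Hence $[g_k]_\beta\le\max\big(2\delta\,\eta^{\alpha-\beta},\,2\|g_k\|_\infty\,\eta^{-\beta}\big)$. Given $\varepsilon>0$, I first fix $\eta$ small enough that $2\delta\,\eta^{\alpha-\beta}<\varepsilon$ (possible since $\alpha>\beta$), and then let $k\to\infty$ to drive the second term below $\varepsilon$; together with $\|g_k\|_\infty\to 0$ this yields $\|g_k\|_{\mcl{C}^\beta_0}\to 0$, so $f_{n_k}\to f$ in $\mcl{C}^\beta_0$ with $f\in K_\delta$.

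The main obstacle is precisely this last interpolation step, and it is exactly where the hypothesis $\beta<\alpha$ enters essentially: the exponent $\alpha-\beta$ must be strictly positive for $\eta^{\alpha-\beta}\to 0$ to control the small-separation regime. This is what converts mere Arzel\`a--Ascoli precompactness in the uniform topology into genuine compactness in the H\"older topology of $\mcl{C}^\beta_0$; the remaining steps are routine.
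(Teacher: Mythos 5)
Your proof is correct and follows essentially the same route as the paper: Arzel\`a--Ascoli for precompactness in the uniform topology, closedness of $K_\delta$ under uniform limits via lower semicontinuity of the \hd seminorm, and an interpolation step exploiting $\beta<\alpha$ to upgrade uniform convergence to $\mcl{C}^\beta_0$ convergence. The only (cosmetic) difference is that the paper packages the interpolation as the single inequality $a\wedge b\leq a^{\theta}b^{1-\theta}$, yielding the quantitative bound $\|g_{n_k}-g\|_{\mcl{C}^{\beta}_0}\leq \|g_{n_k}-g\|_\infty + C\|g_{n_k}-g\|_\infty^{1-\beta/\alpha}$, whereas you split by the separation scale $|t-s|\lessgtr\eta$ and take limits in two stages.
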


\begin{proof}
  We first prove $K_\delta$ is compact in $\mcl{C}_0$. Take any sequence $\{f_n\}_{n\in\Natu} \subset K_\delta$ such that $\|f_n-f\|_{\infty} \to 0$ for some $f$. We have for all $t,s \in [0,T], s\neq t$,
  \begin{align*}
    \frac{|f_n(t)-f_n(s)|}{|t-s|^\alpha} \leq \delta - \|f_n\|_\infty.
  \end{align*}
  Letting $n\to\infty$, we obtain
  \begin{align*}
    \frac{|f(t)-f(s)|}{|t-s|^\alpha}& \leq \delta - \|f\|_\infty.
  \end{align*}
  since $f_n(t)$ converges to $f(t)$ for all $t\in[0,T]$. Then, we obtain $\|f\|_{\mcl{C}^{\alpha}_0}\leq \delta$, and thus, $f\in K_\delta$. Hence, $K_\delta$ is closed in $\mcl{C}_0$, and along with the Arzel\' a-Ascoli theorem, we observe that $K_\delta$ is compact in $\mcl{C}_0$.

  We now prove the compactness of $K_\delta$ in $\mcl{C}^{\beta}_0$. By the compactness in $\mcl{C}_0$, for an arbitrary sequence $\{g_n\}_{n\in\Natu} \subset K_\delta$, there exist a subsequence $\{g_{n_k}\}_{k\in\Natu}$ and $g\in K_\delta$ such that $\|g_{n_k} - g\|_\infty \xrightarrow{k \to \infty} 0$.
  Since $g_{n_k}$ and $g$ belong to $K_\delta$,
  \begin{equation*}
    |g_{n_k}(t) - g(t) - (g_{n_k}(s) - g(s))| \leq 2\delta |t-s|^\alpha,\,^\forall k\in \Natu, \,^\forall t,s \in [0,T]
  \end{equation*}
  follows, and it also holds
  \begin{equation*}
    |g_{n_k}(t) - g(t) - (g_{n_k}(s) - g(s))| \leq |g_{n_k}(t) - g(t)| + |g_{n_k}(s) - g(s)| \leq 2 \|g_{n_k} - g\|_\infty, \,^\forall k\in \Natu, \,^\forall t,s \in [0,T].
  \end{equation*}
  Hence, the inequality
  \begin{equation*}
    a \wedge b \leq a^\theta b^{1-\theta}, \quad a,b>0, \theta\in[0,1]
  \end{equation*}
  leads to
  \begin{align*}
    |g_{n_k}(t) - g(t) - (g_{n_k}(s) - g(s))| &\leq (2\delta|t-s|^\alpha) \wedge (2 \|g_{n_k} - g\|_\infty) \\
    &\leq 2\delta^{\frac{\beta}{\alpha}} |t-s|^\beta \|g_{n_k} - g\|_\infty^{1-\frac{\beta}{\alpha}}
  \end{align*}
  for all $t,s$, where $0<\beta<\alpha$. Thus, we have
  \begin{align*}
    \|g_{n_k} - g\|_{\mcl{C}^{\beta}_0} &= \|g_{n_k} - g\|_\infty + \sup_{t,s\in [0,T]} \frac{ |g_{n_k}(t) - g(t) - (g_{n_k}(s) - g(s))|}{|t-s|^\beta} \\
    &= \|g_{n_k} - g\|_\infty + C \|g_{n_k} - g\|_\infty^{1-\frac{\beta}{\alpha}} \xrightarrow{k\to \infty} 0,
  \end{align*}
  which implies $K_\delta$ is compact in $\mcl{C}^{\beta}_0$.
\end{proof}

\begin{theorem}\label{thmvntight}
  Let $\{Y^{n}\}_{n\in\Natu}$ be a sequence of $\mcl{C}^{\alpha}_0$-valued random variables. If $\mathbb{E}[\|Y^{n}\|_{\mcl{C}^{\alpha}_0}]$ is bounded uniformly in $n$, the sequence $\{Y^{n}\}_{n\in\Natu}$ is tight in $\mcl{C}^{\beta}_0$ for $0<\beta<\alpha$.
\end{theorem}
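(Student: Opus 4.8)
The plan is to deduce tightness in $\mcl{C}^\beta_0$ directly from the compactness of H\"older balls established in Lemma~\ref{lemhldcompact}, combined with Markov's inequality. First I would set $M := \sup_{n\in\Natu}\mathbb{E}[\|Y^n\|_{\mcl{C}^\alpha_0}] < \infty$, which is finite by hypothesis, and recall that a sequence of laws on $\mcl{C}^\beta_0$ is tight precisely when, for every $\eta>0$, there is a set $K$ that is compact in $\mcl{C}^\beta_0$ with $\inf_n \mathsf{P}[Y^n \in K]\geq 1-\eta$. The natural candidate for $K$ is the ball $K_\delta = \{f\in\mcl{C}^\alpha_0 : \|f\|_{\mcl{C}^\alpha_0}\leq \delta\}$, which is compact in $\mcl{C}^\beta_0$ by Lemma~\ref{lemhldcompact}; the strict inequality $\beta<\alpha$ is exactly what makes this compactness available, i.e. what renders the embedding $\mcl{C}^\alpha_0 \hookrightarrow \mcl{C}^\beta_0$ compact.

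It then remains only to bound, uniformly in $n$, the mass that $Y^n$ places outside $K_\delta$. Since $\{Y^n \notin K_\delta\} = \{\|Y^n\|_{\mcl{C}^\alpha_0} > \delta\}$, Markov's inequality would give
\[
  \mathsf{P}[Y^n \notin K_\delta] = \mathsf{P}[\|Y^n\|_{\mcl{C}^\alpha_0} > \delta] \leq \frac{\mathbb{E}[\|Y^n\|_{\mcl{C}^\alpha_0}]}{\delta} \leq \frac{M}{\delta},
\]
a bound independent of $n$. Given $\eta>0$, choosing $\delta = M/\eta$ then yields $\inf_n \mathsf{P}[Y^n \in K_\delta]\geq 1-\eta$, which is the desired tightness in $\mcl{C}^\beta_0$. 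The whole argument is thus short once Lemma~\ref{lemhldcompact} is in place; the substantive work has already been done there.

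The only point requiring care, and the main (though minor) obstacle, is measurability: because $\mcl{C}^\beta_0$ is nonseparable, I would want to confirm that $\{\|Y^n\|_{\mcl{C}^\alpha_0} > \delta\}$ is a genuine event so that Markov's inequality legitimately applies. This I would handle by noting that for a continuous function the H\"older seminorm is a \emph{countable} supremum, $\sup_{t\neq s}|f(t)-f(s)|/|t-s|^\alpha = \sup_{t\neq s,\, t,s\in\Quo\cap[0,T]}|f(t)-f(s)|/|t-s|^\alpha$, so that $\omega\mapsto\|Y^n(\omega)\|_{\mcl{C}^\alpha_0}$ is measurable and its sublevel sets are events; the same remark makes $K_\delta$ an admissible compact set for the tightness definition. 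With this measurability observation recorded, the reduction to Lemma~\ref{lemhldcompact} completes the proof.
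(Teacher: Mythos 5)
Your proposal is correct and follows essentially the same route as the paper: both invoke the compactness of the ball $K_\delta$ in $\mcl{C}^{\beta}_0$ from Lemma~\ref{lemhldcompact} and then apply Markov's inequality to $\|Y^n\|_{\mcl{C}^{\alpha}_0}$, choosing $\delta$ proportional to the uniform moment bound divided by the target probability. Your added remark on the measurability of the \hd norm via a countable supremum is a sensible precaution that the paper leaves implicit, but it does not change the argument.
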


\begin{proof}
  We are to show that for any $\epsilon>0$ there exists a compact set $K$ in $\mcl{C}^{\beta}_0$ such that $\PROB{Y^{n} \not \in K} < \epsilon$.
  Let $\epsilon>0$ and assume $K_\delta$ be the same set as in Lemma \ref{lemhldcompact}. Since $\mathbb{E}[\|Y^{n}\|_{\mcl{C}^{\alpha}_0}] < C$ ,
  taking $\delta=C\epsilon^{-1} + 1$ and Markov's inequality yield that
  \begin{equation*}
    \PROB{Y^{n} \not \in K_\delta} = \PROB{\|Y^{n}\|_{\mcl{C}^{\alpha}_0}>\delta } \leq \frac{\mathbb{E}[\|Y^{n}\|_{\mcl{C}^{\alpha}_0}]}{\delta} \leq \frac{C}{\delta} < \epsilon.
  \end{equation*}
  The result follows from the compactness of $K_\delta$ verified in Lemma \ref{lemhldcompact}.
\end{proof}

\begin{corollary}\label{corvnconvhld}
  Let $Y^{n}$ be a stochastic process which converges to a process $Y$ weakly in $\mcl{C}_0$ as $n$ goes to infinity. If $Y^{n}$ satisfies $\mathbb{E}[\|Y^{n}\|_{\mcl{C}^{\alpha}_0}] \leq C$ for some $C$ uniformly in $n$, it converges to $Y$ weakly in $\mcl{C}^{\beta}_0$ for any positive $\beta<\alpha$.
\end{corollary}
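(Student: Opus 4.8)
The plan is to mirror the subsequence argument already used in the proof of Theorem~\ref{thmlimdistSVE}, upgrading the weak convergence from $\mcl{C}_0$ to $\mcl{C}^{\beta}_0$ by combining tightness in the stronger topology with uniqueness of the limit in the weaker one. First I would invoke the uniform bound $\EXP{\|Y^n\|_{\mcl{C}^\alpha_0}} \le C$ together with Theorem~\ref{thmvntight} to conclude that $\{Y^n\}_{n\in\Natu}$ is tight in $\mcl{C}^{\beta}_0$ for every $\beta<\alpha$.

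Before the main argument I would check that the target $Y$ is genuinely $\mcl{C}^{\beta}_0$-valued, so that the asserted convergence is well posed. This follows from the lower semicontinuity of $\|\cdot\|_{\mcl{C}^\alpha_0}$ on $\mcl{C}_0$ (it is a supremum of the $\mcl{C}_0$-continuous functionals $f \mapsto |f(t)-f(s)|/|t-s|^\alpha$ and the supremum norm): applying the Portmanteau theorem to the $\mcl{C}_0$-convergence $Y^n \to Y$ gives $\EXP{\|Y\|_{\mcl{C}^\alpha_0}} \le \liminf_n \EXP{\|Y^n\|_{\mcl{C}^\alpha_0}} \le C$, so $Y$ is almost surely $\mcl{C}^\alpha_0$-valued and in particular a $\mcl{C}^{\beta}_0$-valued random variable.

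Next I would run the subsequence principle. Given an arbitrary subsequence of $\{Y^n\}$, tightness in $\mcl{C}^{\beta}_0$ and Prokhorov's theorem in the nonseparable setting (Theorem~5.1 of Billingsley~\cite{Billingsley}) furnish a further subsequence $Y^{n_k}$ converging in law in $\mcl{C}^{\beta}_0$ to some limit $Z$. Because the inclusion $\iota \colon \mcl{C}^{\beta}_0 \hookrightarrow \mcl{C}_0$ is continuous, the continuous mapping theorem forces $Y^{n_k} \to Z$ in law in $\mcl{C}_0$ as well; but $Y^{n_k} \to Y$ in law in $\mcl{C}_0$, and weak limits in the separable space $\mcl{C}_0$ are unique, so $Z$ and $Y$ have the same law as $\mcl{C}_0$-valued variables.

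The step I expect to be the main obstacle is promoting this equality of $\mcl{C}_0$-laws to an equality of $\mcl{C}^{\beta}_0$-laws, which is precisely where the nonseparability of $\mcl{C}^{\beta}_0$ intervenes. The remedy is that, being tight in $\mcl{C}^{\beta}_0$, both candidate limits are concentrated on a $\sigma$-compact subset of $\mcl{C}^{\beta}_0$; on any $\mcl{C}^{\beta}_0$-compact set the continuous injection $\iota$ is automatically a homeomorphism onto its Hausdorff image, hence a Borel isomorphism there, so the $\mcl{C}_0$-pushforward determines the $\mcl{C}^{\beta}_0$-law. It follows that every subsequential limit coincides with the law of $Y$, and the subsequence principle then forces the full sequence $Y^n$ to converge in law to $Y$ in $\mcl{C}^{\beta}_0$, as claimed.
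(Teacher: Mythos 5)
Your proposal is correct and follows essentially the same route as the paper: tightness in $\mcl{C}^{\beta}_0$ via Theorem~\ref{thmvntight}, Prokhorov's theorem, identification of every subsequential limit with $Y$ through the continuous embedding into $\mcl{C}_0$, and the subsequence principle. You are in fact more careful than the paper on two points it glosses over --- verifying that $Y$ itself is $\mcl{C}^{\beta}_0$-valued, and justifying that equality of laws in $\mcl{C}_0$ lifts to equality in the nonseparable space $\mcl{C}^{\beta}_0$ --- but these are refinements of the same argument rather than a different approach.
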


\begin{proof}
  Let $0<\beta<\alpha$ and let $\{Y^{n_k}\}_{k\in\Natu}$ be an arbitrary subsequence of $\{Y^{n}\}_{n\in\Natu}$. Then, by Theorem \ref{thmvntight}, together with Prokhorov's theorem, we see that there exist a subsequence $\{Y^{n^\prime_k}\}_{k\in\Natu}$ of $\{Y^{n_k}\}_{k\in\Natu}$ and $\tilde{Y} \in \mcl{C}^{\beta}_0$ such that $Y^{n^\prime_k}$ tends to $\tilde{Y}$ weakly in $\mcl{C}^{\beta}_0$ and, in particular, in $\mcl{C}_0$. However, by the assumption, $Y^{n^\prime_k}$ tends to $V$ weakly in $\mcl{C}_0$. Therefore, it must holds $\tilde{Y}=Y$. Since each subsequence of $\{Y^{n}\}_{n\in\Natu}$ includes a subsequence which converges to $Y$ weakly in $\mcl{C}^{\beta}_0$, we see that the original sequence converges to $Y$ weakly in $\mcl{C}^{\beta}_0$, too. This completes the proof.
\end{proof}

\section{Limit distribution of the integral of fractional parts}
The following lemma is analogous to that of Delattre and Jacod (Lemma 6.1 \cite{DelattreJacod}) and Tukey \cite{Tukey1938}.

\begin{lemma}
  Let $g\in L_1(0,1)$ be either nonnegative or nonpositive and let $\{Y_n\}_{n\in\Natu}$ be a sequence of random variables on $[0,t]$ whose density functions are each
  \begin{equation*}
    f_{Y_n}(s) = C_{n,t}g(ns - [ns]), \quad 0< s < t,
  \end{equation*}
  where
  \begin{equation*}
    C_{n,t} = \left(\frac{[nt]}{n}\int_0^1 g(r)\mr{d}r + \frac{1}{n}\int_0^{nt-[nt]}g(r)\mr{d}r\right)^{-1}
  \end{equation*}
  is the normalizing constant. Then $Y_n$ converges in law to the uniform distribution on $[0,t]$ as $n$ goes to infinity.
\end{lemma}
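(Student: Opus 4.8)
The plan is to prove convergence in law by showing that the cumulative distribution functions of $Y_n$ converge pointwise to that of the uniform law on $[0,t]$; since the limiting distribution function is continuous, this pointwise convergence already gives convergence in law.

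First I would record the elementary integration identity underlying both the normalization and the distribution function. For any $x\in(0,t]$, splitting $[0,x]$ into the full subintervals $[k/n,(k+1)/n)$, $k=0,\dots,[nx]-1$, together with the remaining partial interval $[[nx]/n,x]$, and substituting $r=ns-[ns]$ on each piece, one obtains
\begin{equation*}
  \int_0^x g(ns-[ns])\,\mr{d}s = \frac{[nx]}{n}\int_0^1 g(r)\,\mr{d}r + \frac1n\int_0^{nx-[nx]} g(r)\,\mr{d}r.
\end{equation*}
Taking $x=t$ recovers exactly $C_{n,t}^{-1}$, which confirms that $f_{Y_n}$ is a genuine probability density: its sign is correct in either the nonnegative or the nonpositive case, since $g$ and $C_{n,t}$ then carry the same sign, and $\int_0^1 g\neq 0$ is forced for the density to exist.

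Next I would compute the distribution function $F_{Y_n}(x)=\int_0^x f_{Y_n}(s)\,\mr{d}s$ directly from this identity, obtaining for $x\in(0,t)$
\begin{equation*}
  F_{Y_n}(x) = C_{n,t}\left(\frac{[nx]}{n}\int_0^1 g(r)\,\mr{d}r + \frac1n\int_0^{nx-[nx]} g(r)\,\mr{d}r\right).
\end{equation*}
Letting $n\to\infty$, I would use $[nx]/n\to x$ and $[nt]/n\to t$, together with the crucial bound $\left|\tfrac1n\int_0^{nx-[nx]} g(r)\,\mr{d}r\right|\le \tfrac1n\|g\|_{L_1(0,1)}\to 0$ (and likewise for the analogous remainder inside $C_{n,t}$), which is precisely where the $L_1$ hypothesis enters. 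This yields $C_{n,t}\to (t\int_0^1 g)^{-1}$ and the bracketed term $\to x\int_0^1 g$, so that $F_{Y_n}(x)\to x/t$, the distribution function of the uniform law on $[0,t]$.

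Finally, since $F_{Y_n}(x)=0$ for $x\le 0$ and $F_{Y_n}(x)=1$ for $x\ge t$, the convergence $F_{Y_n}\to F$ holds at every real point, and because the limiting distribution function is continuous the pointwise convergence is exactly convergence in law. The only points requiring care are the vanishing of the fractional-part remainder terms, controlled uniformly by $\|g\|_{L_1(0,1)}$, and the cancellation of the factor $\int_0^1 g$ between numerator and normalizing constant, so that the particular shape of $g$ disappears in the limit; beyond this bookkeeping I do not expect a genuine obstacle.
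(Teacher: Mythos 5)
Your proof is correct, but it takes a genuinely different route from the paper. The paper first verifies the normalization identity (the same interval-splitting computation you use) and then proves convergence in law by showing convergence of the characteristic functions $\int_0^t e^{ixs} f_{Y_n}(s)\,\mr{d}s \to \frac{1}{t}\int_0^t e^{ixs}\,\mr{d}s$; this requires an additional Riemann-sum estimate $\frac{1}{n}\sum_{k=0}^{[nt]-1} e^{ix k/n} \to \int_0^t e^{ixs}\,\mr{d}s$ together with a dominated convergence argument to handle the factor $\int_0^1 e^{ixr/n} g(r)\,\mr{d}r$. You instead observe that the very same splitting identity, applied with upper limit $x$ rather than $t$, already gives the distribution function $F_{Y_n}(x)$ in closed form, so that the pointwise limit $F_{Y_n}(x)\to x/t$ follows from $[nx]/n\to x$ and the bound $\bigl|\frac1n\int_0^{nx-[nx]}g\bigr|\le \frac1n\|g\|_{L_1(0,1)}$; since the limiting distribution function is continuous, this is convergence in law. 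Your argument is shorter and more elementary, avoiding the oscillatory-sum estimate entirely, and you correctly flag the two points that need care: $\int_0^1 g\neq 0$ (forced by the sign hypothesis if $f_{Y_n}$ is to be a density) and the vanishing of the fractional-part remainders. Both proofs deliver exactly what the paper needs downstream, namely $\int_0^t k(s)g(ns-[ns])\,\mr{d}s\to\int_0^1 g\,\int_0^t k$ for continuous $k$, which follows from weak convergence either way.
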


\begin{proof}
  Firstly, we confirm that $f_{Y_n}(y)$ is certainly a probability density function. It is easily checked by the simple calculation:
  \begin{equation}\label{eqgnsnscalc1}
    \begin{split}
      \int_0^t g(ns-[ns])\mr{d}s&=\sum_{j=0}^{[nt]-1} \int_{\frac{j}{n}}^{\frac{j+1}{n}}g(ns-j)\mr{d}s + \int_{\frac{[nt]}{n}}^{t}  g(ns-[nt])\mr{d}s\\
      &=\frac{1}{n}\left( \sum_{j=0}^{[nt]-1} \int_0^{1}  g(r)\mr{d}r + \int_0^{nt-[nt]}  g(r)\mr{d}r\right)\\
      &=\frac{[nt]}{n} \int_0^1 g(r)\mr{d}r +  \frac{1}{n}\int_0^{nt-[nt]}  g(r)\mr{d}r \\
      &=(C_{n,t})^{-1}.
    \end{split}
  \end{equation}
  \par We now show the convergence of the characteristic function of $Y_n$. For all $x \in \Real$ and $i = \sqrt{-1}$, we have
  \begin{align*}
    \int_0^t e^{ixs} f_{Y_n}(s) \mr{d} s &=\int_0^t e^{ixs} C_{n,t}g(ns-[ns]) \mr{d} s \\
    &= C_{n,t} \sum_{k=0}^{[nt]-1} \int_{\frac{k}{n}}^{\frac{k+1}{n}} e^{ixs} g(ns-k) \mr{d} s + C_{n,t} \int_{\frac{[nt]}{n}}^{t} e^{ixs} g(ns-[nt]) \mr{d} s\\
    &= \frac{C_{n,t}}{n}\sum_{k=0}^{[nt]-1} \int_0^1 e^{ix\frac{r+k}{n}} g(r) \mr{d} r + \frac{C_{n,t}}{n} \int_0^{nt-[nt]} e^{ix\frac{r+[nt]}{n}} g(r) \mr{d} r.
  \end{align*}
  Since $0< nt-[nt] < 1$,
  \begin{equation*}
    \left|\frac{1}{n} \int_0^{nt-[nt]} g(r) \mr{d} r\right| \leq  \frac{1}{n} \int_{0}^{1} |g(r)| \mr{d} r \rightarrow 0,
  \end{equation*}
  and therefore, by the convergence
  \begin{equation*}
    C_{n,t} = \left( \frac{[nt]}{n}\int_0^1 g(r)\mr{d}r + \frac{1}{n} \int_0^{nt-[nt]} g(r)\mr{d}r \right)^{-1} \xrightarrow{n\to\infty} \left(t\int_0^1 g(r)\mr{d}r\right)^{-1},
  \end{equation*}
  we see
  \begin{equation*}
    \left|\frac{C_{n,t}}{n} \int_0^{nt-[nt]} e^{ix\frac{r+[nt]}{n}} g(r) \mr{d} r\right| \leq  \frac{C_{n,t}}{n} \int_{0}^{1} |g(r)| \mr{d} r \rightarrow 0.
  \end{equation*}
  By the triangle inequality, it holds
  \begin{equation*}
    \left|\frac{1}{n}\sum_{k=0}^{[nt]-1} e^{ix\frac{k}{n}} - \int_0^t e^{ixs} \mr{d} s \right|\leq \left|\frac{1}{n}\sum_{k=0}^{[nt]-1} e^{ix\frac{k}{n}} - \int_0^{\frac{[nt]}{n}} e^{ixs} \mr{d} s \right| + \int_{\frac{[nt]}{n}}^t e^{ixs} \mr{d}s.
  \end{equation*}
  The last term on the right-hand side vanishes as $n$ goes to infinity. The other term on the right-hand side is evaluated as
  \begin{align*}
    \left|\frac{1}{n}\sum_{k=0}^{[nt]-1} e^{ix\frac{k}{n}} - \int_0^{\frac{[nt]}{n}} e^{ixs} \mr{d} s \right| &=\left| \frac{1}{n}\sum_{k=0}^{[nt]-1} \left(e^{ix\frac{k}{n}}-n\int_{\frac{k}{n}}^{\frac{k+1}{n}} e^{ixs} \mr{d} s \right)\right|\\
    &=\left| \frac{1}{n}\sum_{k=0}^{[nt]-1} \left(e^{ix\frac{k}{n}}-\int_{0}^{1} e^{ix\frac{s+k}{n}} \mr{d} s \right)\right|\\
    &\leq \left|1-\int_{0}^{1} e^{ix\frac{s}{n}} \mr{d} s \right| \cdot \frac{1}{n}\sum_{k=0}^{[nt]-1} \left| e^{ix\frac{k}{n}} \right|\\
    &\leq \int_{0}^{1} \left|1- e^{ix\frac{s}{n}} \right| \mr{d} s \frac{[nt]}{n}\rightarrow 0.
  \end{align*}
  Hence, we have
  \begin{equation*}
    \lim_n\frac{1}{n}\sum_{k=0}^{[nt]-1} e^{ix\frac{k}{n}} = \int_0^t e^{ixs} \mr{d} s,
  \end{equation*}
  and then, along with the DCT, it is implied that
  \begin{align*}
    C_{n,t} \frac{1}{n}\sum_{k=0}^{[nt]-1} \int_0^1 e^{ix\frac{r+k}{n}} g(r) \mr{d} r &= C_{n,t}  \left(\frac{1}{n}\sum_{k=0}^{[nt]-1} e^{ix\frac{k}{n}}\right) \int_0^1 e^{ix\frac{r}{n}} g(r) \mr{d} r\\
    &= \left(t\int_0^1 g(r)\mr{d}r\right)^{-1} \int_0^{t} e^{ixs} \mr{d} s \int_0^1 g(r) \mr{d} r\\
    &= \int_0^{t} \frac{1}{t}e^{ixs} \mr{d} s.
  \end{align*}
  Indeed, a dominating function is derived as
  \begin{equation*}
    \left|\left(\frac{1}{n}\sum_{k=0}^{[nt]-1} e^{ix\frac{k}{n}}\right) e^{ix\frac{r}{n}} g(r) \right| \leq \frac{[nt]}{n}|g(r)| \leq T |g(r)|.
  \end{equation*}
  Thus,
  \begin{equation*}
    \int_0^t e^{ixs} f_{Y_n}(s) \mr{d} s \rightarrow \int_0^t \frac{1}{t} e^{ixs}\mr{d}s
  \end{equation*}
  holds. Since the function $s \mapsto 1/t$ is the density function of
 the uniform distribution on $[0,t]$, this means the convergence of
 the characteristic functions, which concludes the proof.
\end{proof}

By this lemma, for any continuous function $k$, we have
\begin{equation}\label{equnidisconvas}
  \begin{split}
    &\int_0^t k(s) g(ns-[ns]) \mr{d} s= (C_{n,t})^{-1} \int_0^t k(s) f_{Y_n}(s)\mr{d} s \\
    &\xrightarrow{n\to \infty}t\int_0^1 g(r)\mr{d}r\int_0^t k(s) \frac{\mr{d}s}{t} = \int_0^1 g(r)\mr{d}r \int_0^t k(s) \mr{d} s
  \end{split}
\end{equation}
by the property of convergence in law. We will apply this result to stochastic processes.

\begin{lemma}\label{lemrefineddelattre}
  Assume further $g\in L_2(0,1)$. Let $H^{(n)}$ and $H$ be stochastic processes on $[0,T]$ such that
  \begin{equation*}
    \EXP{\int_0^T |H^{(n)}_s-H_s|^2 \mr{d} s} \rightarrow 0
  \end{equation*}
  with $H$ being almost surely continuous. Then, for all $t\in [0,T]$,
  \begin{equation*}
    \int_0^t H^{(n)}_s g(ns-[ns])\mr{d}s \xrightarrow[\text{in }L_2]{n\to\infty}\int_0^1g(r)\mr{d}r \int_0^t H_s \mr{d} s.
  \end{equation*}
\end{lemma}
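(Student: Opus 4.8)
The plan is to prove the convergence by splitting the difference into a part controlled by the $L_2$ hypothesis on $H^{(n)}-H$ and a part that reduces, pathwise, to the deterministic convergence \eqref{equnidisconvas} already established for continuous integrands. Precisely, I would write
\[
\int_0^t H^{(n)}_s g(ns-[ns])\,\mr{d}s - \int_0^1 g(r)\,\mr{d}r \int_0^t H_s\,\mr{d}s = A_n + B_n,
\]
where $A_n = \int_0^t (H^{(n)}_s - H_s)\, g(ns-[ns])\,\mr{d}s$ and $B_n = \int_0^t H_s\, g(ns-[ns])\,\mr{d}s - \int_0^1 g(r)\,\mr{d}r \int_0^t H_s\,\mr{d}s$, and show that each tends to $0$ in $L_2$. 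A preliminary observation used for both terms is that the stronger assumption $g\in L_2(0,1)$ yields a bound on $\int_0^t g(ns-[ns])^2\,\mr{d}s$ that is uniform in $n$: repeating the computation \eqref{eqgnsnscalc1} with $g$ replaced by $g^2\in L_1(0,1)$ gives $\int_0^t g(ns-[ns])^2\,\mr{d}s = \tfrac{[nt]}{n}\int_0^1 g(r)^2\,\mr{d}r + \tfrac1n\int_0^{nt-[nt]} g(r)^2\,\mr{d}r \le C$ for a constant $C$ independent of $n$.

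For $A_n$, I would apply the Cauchy--Schwarz inequality in the $s$-variable and then the uniform bound above to obtain
\[
|A_n|^2 \le \left(\int_0^t |H^{(n)}_s - H_s|^2\,\mr{d}s\right)\left(\int_0^t g(ns-[ns])^2\,\mr{d}s\right) \le C\int_0^t |H^{(n)}_s - H_s|^2\,\mr{d}s .
\]
Taking expectations and invoking the hypothesis $\EXP{\int_0^T |H^{(n)}_s - H_s|^2\,\mr{d}s}\to 0$ gives $\EXP{|A_n|^2}\to 0$.

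For $B_n$, since $H$ is almost surely continuous, \eqref{equnidisconvas} applied pathwise to the continuous integrand $k = H_\cdot(\omega)$ shows $B_n \to 0$ almost surely. To upgrade this to convergence in $L_2$ I would use a domination argument: by Cauchy--Schwarz and the uniform bound above, $|B_n|^2 \le C'\int_0^t H_s^2\,\mr{d}s$ for a constant $C'$ independent of $n$, where $\int_0^t H_s^2\,\mr{d}s$ is integrable because $H\in L_2(\mr{d}s\otimes\mr{d}\mathsf{P})$, which follows from the hypothesis together with $H^{(n)}\in L_2$. The dominated convergence theorem then yields $\EXP{|B_n|^2}\to 0$, and combining the two estimates completes the proof.

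The main obstacle I anticipate is the passage from the almost-sure convergence of $B_n$ to convergence in $L_2$: this is exactly the place where the integrability $g\in L_2(0,1)$ (rather than merely $g\in L_1$) is needed, both to produce the uniform-in-$n$ bound on $\int_0^t g(ns-[ns])^2\,\mr{d}s$ and to supply the $n$-independent dominating random variable $C'\int_0^t H_s^2\,\mr{d}s$ required for the dominated convergence theorem. Once this bound is secured, the estimate for $A_n$ is routine.
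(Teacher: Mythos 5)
Your proposal is correct and follows essentially the same route as the paper's proof: the same decomposition into the terms $A_n$ and $B_n$, the same uniform-in-$n$ bound on $\int_0^t g(ns-[ns])^2\,\mr{d}s$ obtained by applying the computation for $g$ to $g^2$, Cauchy--Schwarz for the first term, and the pathwise convergence from \eqref{equnidisconvas} upgraded to $L_2$ by domination for the second. The only cosmetic difference is that you spell out why $\EXP{\int_0^t H_s^2\,\mr{d}s}<\infty$, which the paper simply invokes as part of the hypothesis.
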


\begin{proof}
  The following evaluation is derived in a similar way to $(\ref{eqgnsnscalc1})$ and we use it several times in this proof:
  \begin{equation}\label{eqgnsnscalc2}
    \int_0^t |g(ns-[ns])|^2 \mr{d}s = \frac{[nt]}{n} \int_0^1 |g(r)|^2 \mr{d}r + \int_0^{nt-[nt]} |g(r)|^2\mr{d}r \leq C,
  \end{equation}
  where $C$ does not depend on $n$. It follows from Minkowski's inequality,
  \begin{align*}
    &\hspace{1em}\left\|\int_0^t H^{(n)}_s g(ns-[ns]) \mr{d}s - \int_0^1 g(r)\mr{d}r\int_0^t H_s \mr{d} s\right\|_{L_2} \\
    &\leq \left\|\int_0^t (H^{(n)}_s-H_s) g(ns-[ns]) \mr{d}s\right\|_{L_2} + \left\|\int_0^t H_s \left(g(ns-[ns])-\int_0^1 g(r)\mr{d}r\right)\mr{d} s\right\|_{L_2}.
  \end{align*}
  By the Cauchy-Schwarz inequality and (\ref{eqgnsnscalc2}), the first term on the right-hand side satisfies
  \begin{align*}
    \EXP{\left|\int_0^t (H^{(n)}_s-H_s) g(ns-[ns])\mr{d}s\right|^2} &\leq \left(\int_0^t  |g(ns-[ns])|^2 \mr{d}s \right) \EXP{\int_0^t |H^{(n)}_s-H_s|^2\mr{d}s}\\
    &\leq C\EXP{\int_0^t |H^{(n)}_s-H_s|^2\mr{d}s} \rightarrow 0.
  \end{align*}
  Since $H$ is continuous, according to (\ref{equnidisconvas}),
  \begin{equation*}
    \left|\int_0^t H_s \left(g(ns-[ns])-\int_0^1 g(r)\mr{d}r\right)\mr{d} s\right|^2 \rightarrow 0
  \end{equation*}
  holds almost surely. We have also
  \begin{equation*}
    \left|\int_0^t H_s \left(g(ns-[ns])-\int_0^1 g(r)\mr{d}r\right)\mr{d} s\right|^2 \leq \int_0^t H_s^2 \mr{d}s \int_0^t \left(g(ns-[ns])-\int_0^1 g(r)\mr{d}r\right)^2\mr{d}s
  \end{equation*}
  by the Cauchy-Schwarz inequality. The hypothesis $\EXP{\int_0^t H_s^2 \mr{d}s}<\infty$ and the evaluation
  \begin{equation*}
    \int_0^t \left(g(ns-[ns])-\int_0^1 g(r)\mr{d}r\right)^2\mr{d}s \leq \int_0^t |g(ns-[ns])|^2 \mr{d}r +2T \left(\int_0^1 g(r)\mr{d}r\right)^2< \infty,
  \end{equation*}
  which is derived from (\ref{eqgnsnscalc2}), enable us to apply the DCT with respect to the integration of $\mathsf{P}$ to obtain
  \begin{equation*}
    \EXP{\left|\int_0^t H_s \left(g(ns-[ns])-\int_0^1 g(r) \mr{d}r\right)\mr{d} s\right|^2} \rightarrow 0,
  \end{equation*}
  which concludes the proof.
\end{proof}

\noindent
{\bf Acknowledgement:}
The authors are grateful to Yushi Hamaguchi for helpful discussions.

\bibliography{./reference}
\bibliographystyle{plain}

\end{document}